\author{Dane Rogers$^*$ and Matthias Winkel\thanks{University of Oxford, Department of Statistics, 24--29 St Giles', Oxford OX1 3LB, UK}}
\numberwithin{equation}{section}
\theoremstyle{plain}
\newtheorem{dummy}{***}[section]
\newtheorem{thm}[dummy]{Theorem}
\newtheorem{lem}[dummy]{Lemma}
\newtheorem{cor}[dummy]{Corollary}
\newtheorem{prop}[dummy]{Proposition}
\newtheorem{conj}[dummy]{Conjecture}
\theoremstyle{definition}
\newtheorem{defn}[dummy]{Definition}
\theoremstyle{remark}
\newtheorem{rem}[dummy]{Remark}
\newcommand{\RR}{\mathbb{R}} 
\newcommand{\NN}{\mathbb{N}}
\newcommand{\DD}{\mathbb{D}}
\newcommand{\PP}{\mathbb{P}}
\newcommand{\LL}{\mathbb{L}}
\newcommand{\EE}{\mathbb{E}}
\newcommand{\Cc}{\mathcal{C}}
\newcommand{\Ll}{\mathcal{L}}
\newcommand{\Ss}{\mathcal{S}}
\newcommand{\Ff}{\mathcal{F}} 
\newcommand{\nl}{\varnothing} 
\title{\vspace{-2.2cm}$\;$\\ \bf A RAY--KNIGHT REPRESENTATION\\ OF UP-DOWN CHINESE RESTAURANTS}
\begin{document}

\maketitle

\vspace{-0.6cm}

\begin{abstract}
We study composition-valued continuous-time Markov chains that appear naturally in the framework of Chinese Restaurant
Processes (CRPs). 
As time evolves, new customers arrive (up-step) and existing customers leave (down-step) at suitable rates derived from the ordered CRP of Pitman and Winkel (2009).
We relate such up-down CRPs to the splitting trees of Lambert (2010) inducing spectrally positive
L\'evy processes. Conversely, we develop theorems of Ray--Knight type to recover more general up-down CRPs
from the heights of L\'evy processes with jumps marked by integer-valued paths.
We further establish limit theorems for the L\'evy process and the integer-valued paths to connect to work by Forman et
al. (2018+) on interval partition diffusions and hence to some long-standing conjectures.

{\bf Keywords:} Chinese Restaurant Process; composition; Ray--Knight theorem; scaling limit; squared Bessel process; stable process.

{\bf 2010 Mathematics Subject Classification:} 60J80; 60G18.
\end{abstract}

\section{Introduction} 

The purpose of this paper is to study a class of continuous-time Markov chains in the state space\vspace{-0.2cm}
$$\mathcal{C}:=\big\{(n_1,\ldots,n_k)\colon k\ge 0,\ n_1,\ldots,n_k\ge 1\big\}$$
of integer compositions, which includes, for $k=0$, the empty vector that we also denote by $\varnothing$. Such Markov chains arise naturally in the framework of the 
Dubins--Pitman two-parameter Chinese Restaurant Process (CRP) \cite{pitman06}, when considered with the additional order structure of Pitman and Winkel \cite{pitmanwinkel09}. 
Specifically, we interpret $n_1,\ldots,n_k$ as the numbers of customers at an ordered list of $k$ tables in a restaurant, short \em table sizes\em. In the most basic model, 
we allow only the following transitions from $(n_1,\ldots,n_k)\in\mathcal{C}$.
\begin{itemize}
  \item At rate $n_i-\alpha$, a new customer joins the $i$-th table, leading to a transition into state $(n_1,\ldots,n_{i-1},n_i+1,n_{i+1},\ldots,n_k)$, $1\le i\le k$.
  \item At rate $\alpha$ a new customer opens a new table inserted directly to the right of the $i$-th table, leading to state $(n_1,\ldots,n_i,1,n_{i+1},\ldots,n_k)$, 
    $1\le i\le k$.
  \item At rate $\theta$ a new customer opens a new table inserted in the left-most position, leading to state $(1,n_1,\ldots,n_k)$.
  \item At rate 1 each existing customer at table $i$, $1\le i\le k$, leaves, leading to either $(n_1,\ldots,n_{i-1},n_i\!-\!1,n_{i+1},\ldots,n_k)$ if $n_i\!\ge\! 2$ or
    $(n_1,\ldots,n_{i-1},n_{i+1},\ldots,n_k)$ if $n_i\!=\!1$.  
\end{itemize}
These transition rates give rise to a $\mathcal{C}$-valued continuous-time Markov chain if $0\le\alpha\le 1$ and $\theta\ge 0$, which we call a \em continuous-time up-down 
ordered Chinese Restaurant Process with parameters $(\alpha,\theta)$\em, or, for the purposes of this paper, just \em up-down oCRP$(\alpha,\theta)$\em. 
\pagebreak

The first three bullet points increase the number of customers and we call any one of them an \em up-step\em, 
while the last bullet point decreases their number, a \em down-step\em.  
Conditionally given an up-step, the (induced discrete-time) transition probabilities are those of an ordered CRP \cite{pitmanwinkel09}. 
Without the order of tables, the up-step rates have been related to the usual Dubins--Pitman CRP \cite[Section 3.4]{pitman06},  
where the middle two bullet points combine to a rate $k\alpha+\theta$ for a new table; see also \cite{joycetavare87}. 
See \cite{james06,gnedinpitman05,pitmanyakubovich18} for other discussions of order structures related to CRPs.  
It was shown in \cite[Proposition 6]{pitmanwinkel09} that starting from $(1)\in\mathcal{C}$, the distribution $p_{n+1}$ after $n$ consecutive up-steps is the same as in 
(the left-right reversal) of a Gnedin--Pitman \cite{gnedinpitman05} regenerative composition structure that is known to be weakly sampling consistent, 
i.e.\ the push-forward of $p_{n+1}$ under a down-step is also $p_{n}$. Specifically, writing $N_i=n_i+\cdots+n_k$,\vspace{-0.1cm}
$$p_n(n_k,\ldots,n_1)=\prod_{i=1}^kr(N_i,n_i),\quad\mbox{where }r(n,m)= \binom{n}{m} \frac{(n\!-\!m)\alpha + m\theta}{n}\frac{(1\!-\!\alpha)_{(m-1)\uparrow}}{(n\!-\!m\!+\!\theta)_{m\uparrow}},\vspace{-0.1cm}$$
for $1 \leq m \leq n$, with $x_{y\uparrow}:=x(x+1)\cdots(x+y-1)$. This suggests to consider a \em discrete-time up-down oCRP$(\alpha,\theta)$ \em on 
$\mathcal{C}_n:=\big\{(n_1,\ldots,n_k)\in\mathcal{C}\colon n_1+\cdots+n_k=n\big\}$, $n\ge 1$, in which each transition consists of an up-step followed by a down-step. 
Then $p_n$ is stationary. Similar up-down chains on related state spaces were studied in \cite{borodinolshanski09,fulman09}, 
and for the corresponding discrete-time up-down CRP without the order of tables, Petrov \cite{petrov09} noted the stationary distribution,
which in our setting is obtained as the push-forward of $p_n$ under the map that ranks a vector into decreasing order. 
Petrov's main result was the existence of a diffusive scaling limit of his up-down chain, when represented in the infinite-dimensional simplex of decreasing sequences with sum 1.

\begin{conj} Let $(C^n_j)_{j\ge 0}$ be a $\mathcal{C}_n$-valued discrete-time up-down oCRP$(\alpha,\theta)$, for each $n\ge 1$. If $n^{-1}C_0^n$ converges, as $n\rightarrow\infty$, then $(n^{-1}C^n_{[n^2y]})_{y\ge 0}$ has a 
  diffusive scaling limit, when suitably represented on a space of interval partitions. For $0<\theta=\alpha<1$ and $0=\theta<\alpha<1$, these are the $(\alpha,\alpha)$- and $(\alpha,0)$-interval-partition evolutions of \cite{PartB}.
\end{conj}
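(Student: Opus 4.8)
The plan is to route the discrete-time chain through the continuous-time up-down oCRP$(\alpha,\theta)$ and then through the Ray--Knight representation developed earlier, reducing the problem to scaling limits for a L\'evy process and its marks. First I would compare $(C^n_j)_{j\ge0}$ with the continuous-time chain: in state of total mass $N$ the total birth rate is $N+\theta$ and the total death rate is $N$, so events occur at total rate $\approx 2n$ while $N$ stays of order $n$, and by a law of large numbers the number of steps up to continuous time $\tau$ concentrates around a deterministic multiple of $n\tau$. Hence observing $[n^2y]$ discrete steps corresponds to running the continuous-time chain for time of order $n$, and a de-Poissonization argument should transfer a scaling limit between the two chains, the main point being that on the diffusive scale the strictly alternating up--down structure of the discrete chain and the Poissonian interleaving of the continuous chain become indistinguishable. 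The one structural difference to keep track of is that the discrete-time chain conserves total mass exactly (net change $0$ per step), whereas the continuous-time total mass $N_{ns}/n$ converges to a squared Bessel process of dimension $2\theta$; this is precisely the self-similar versus unit-mass dichotomy of \cite{PartB}, and I would recover the conserved-mass limit from the self-similar one by the inverse-total-mass time change used there.

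Second, I would apply the Ray--Knight representation to encode the continuous-time oCRP as a spectrally positive L\'evy process carrying integer-valued marks on its jumps, each jump corresponding to a table and its mark describing the birth--death evolution of that table's size. The two limit theorems announced in the abstract then enter: (a) under spatial scaling $1/n$ and the matching time scaling, the driving L\'evy process converges to a spectrally positive stable process of index $1+\alpha$; and (b) the integer-valued mark paths, which are birth--death chains on $\{1,2,\dots\}$ describing individual table sizes, converge after diffusive rescaling to squared Bessel excursions of dimension $-2\alpha$. I would prove these two convergences jointly, establishing tightness in the Skorokhod topology for the driving process and in the excursion-theoretic topology for the marks.

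Third comes the transfer and identification. I would show that the Ray--Knight encoding map --- from a marked excursion to the ordered interval partition of (rescaled) table sizes --- is continuous in the relevant topologies, so that the joint convergence of (L\'evy process, marks) pushes forward to convergence of $(n^{-1}C^n_{[n^2y]})_{y\ge0}$ on the space of interval partitions. The candidate limit is the self-similar evolution, and after the mass time change the normalized $(\alpha,\theta)$-interval-partition evolution; to pin it down uniquely I would match its infinitesimal description against the well-posed martingale problem of \cite{PartB}. In the two regimes $0<\theta=\alpha<1$ and $0=\theta<\alpha<1$, where \cite{PartB} constructs and characterizes the $(\alpha,\alpha)$- and $(\alpha,0)$-evolutions independently, this identification closes the argument.

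The hard part will be twofold. The first difficulty is the continuity of the Ray--Knight map in the interval-partition topology: short jumps of the L\'evy process (small tables) accumulate densely, so one must control their joint contribution and show that no mass escapes to, or materializes from, the ``dust'' of infinitesimal tables in the limit; this is where tightness is most delicate and where a careful comparison of the discrete marks with their continuous squared-Bessel analogues is needed. The second, and the reason the statement remains only a conjecture in general, is the uniqueness of the limiting interval-partition diffusion: for arbitrary $(\alpha,\theta)$ there is at present no independent construction with a well-posed characterization to identify the limit against, whereas \cite{PartB} supplies exactly this only in the two special parameter regimes named above.
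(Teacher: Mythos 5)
The statement you are trying to prove is Conjecture 1.1 of the paper: it is stated as a \emph{conjecture}, and the paper offers no proof of it. The authors explicitly write that they ``do not explore further the passage between discrete and continuous time,'' and what they provide instead is supporting evidence: the Ray--Knight (skewer) representation of the continuous-time chain (Theorem 1.2), scaling limits for the building blocks --- table sizes to ${\tt BESQ}(-2\alpha)$, total mass to ${\tt BESQ}(2\theta)$, the encoding L\'evy process to ${\tt Stable}(1+\alpha)$ (Theorems 1.3--1.5) --- and a mixing-time result of order $n^2$ from \cite{Rogers2020}. So there is no proof in the paper to compare yours against; your proposal is a roadmap for the same program the paper envisions (de-Poissonization in the spirit of \cite{pal13}, the skewer construction, and identification with the evolutions of \cite{PartB}), not an alternative to an existing argument.

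As a proof, your proposal has genuine gaps, and they are exactly the reasons the statement remains open. First, your third step asserts that the Ray--Knight/skewer map is continuous in the relevant topologies, so that joint convergence of the L\'evy process and its marks ``pushes forward'' to convergence of the rescaled compositions. This is the crux, and it is not a continuity statement one can expect to hold: in the continuum setting of \cite{PartA}, convergence of skewer processes is \emph{not} deduced from Skorokhod convergence of the encoding process; it requires uniform control of local times of the stable process \cite{fourauthor16}, precisely because small jumps (small tables) accumulate densely and their aggregate contribution must be controlled. Your own text flags this as ``delicate,'' but flagging it is not supplying it; without it the transfer step collapses. Second, the de-Poissonization step is unproven: the discrete chain lives on $\mathcal{C}_n$ with total mass exactly $n$, while the continuous-time chain has fluctuating mass converging to ${\tt BESQ}(2\theta)$; passing between them via an ``inverse-total-mass time change'' is a known strategy in finite dimensions \cite{pal13}, but no such argument exists in the interval-partition setting, and the paper deliberately declines to attempt it. Third, your identification step invokes a ``well-posed martingale problem'' from \cite{PartB} for the limiting evolutions; \cite{PartB} constructs these processes by excursion-theoretic/transition-kernel methods, and a martingale-problem characterization suitable for identifying subsequential limits is not available there, so even in the two named parameter regimes the uniqueness-of-limit step is not closed by citation. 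In short: your outline is consistent with the paper's intended program, but each of its three pillars is an open problem rather than a lemma, which is why the paper states the result as a conjecture.
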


While Petrov \cite{petrov09} used analytic methods studying how generators act on a certain core of symmetric functions, we develop here probabilistic methods in order to study
scaling limits associated with the (continuous-time) up-down oCRP$(\alpha,\theta)$. As demonstrated e.g.\ by Pal \cite{pal13} in a finite-dimensional setting, asymptotic results for
continuous-time Markov chains associated with discrete-time Markov chains via a method that can be referred to as ``Poissonization'' may sometimes be used to deduce asymptotic results for the 
discrete-time Markov chains via ``de-Poissonization''. See also Shiga \cite{Shiga1990}. Supporting the conjecture, it is confirmed in \cite[Theorem 3.1.2]{Rogers2020} that the mixing times of $(C^n_j)_{j\ge 0}$, in the sense of controlling the maximal separation distance of Aldous and Diaconis \cite{AldousDiaconis1987}, are of order $n^2$, extending results of Fulman \cite{fulman09} in the unordered case.

In the present paper, we do not explore further the passage between discrete and continuous time. We relate the 
(continuous-time) up-down oCRP$(\alpha,\theta)$ to genealogical trees \cite{geigerkersting97} and their jumping chronological contour processes (JCCPs) \cite{lambert10} that lead 
to representations as spectrally positive L\'evy processes, whose jumps we further mark by integer-valued paths. We show that the up-down 
oCRP$(\alpha,\theta)$, and natural generalisations, can be recovered from the heights of such a marked L\'evy process. 
This result is reminiscent of the recovery of a geometric Galton--Watson process from the occupation measure (upcrossing counts) of a suitably stopped simple symmetric random 
walk \cite{knight63} or indeed, in the scaling limit, the recovery of a Feller diffusion (squared Bessel process of dimension 0) as local time process of a stopped Brownian 
motion. 

In our framework, we establish scaling limits of the L\'evy process and of the integer-valued paths marking its jumps hence connecting to the framework of Forman et al. 
\cite{fourauthor16,PartA,PartB} and thereby to two long-standing conjectures. Specifically, Aldous \cite{AldousDiffusionProblem} conjectured the existence of a scaling limit for a simple up-down Markov chain with uniform stationary distribution on certain discrete binary trees as a diffusive evolution of a Brownian Continuum Random Tree \cite{AldousCRT1}. Feng and Sun \cite{FengSun2010} conjectured the existence of certain measure-valued diffusions whose stationary distributions are measures with Poisson--Dirichlet distributed atom sizes, also known as Pitman--Yor processes \cite{Teh2006} or three-parameter Dirichlet processes \cite{Carlton2002} in the Bayesian non-parametrics literature.

\subsection{Main results}

\begin{figure}[tbp] \centering \includegraphics[trim = 0mm 5mm 0mm 10mm, clip, scale=0.4]{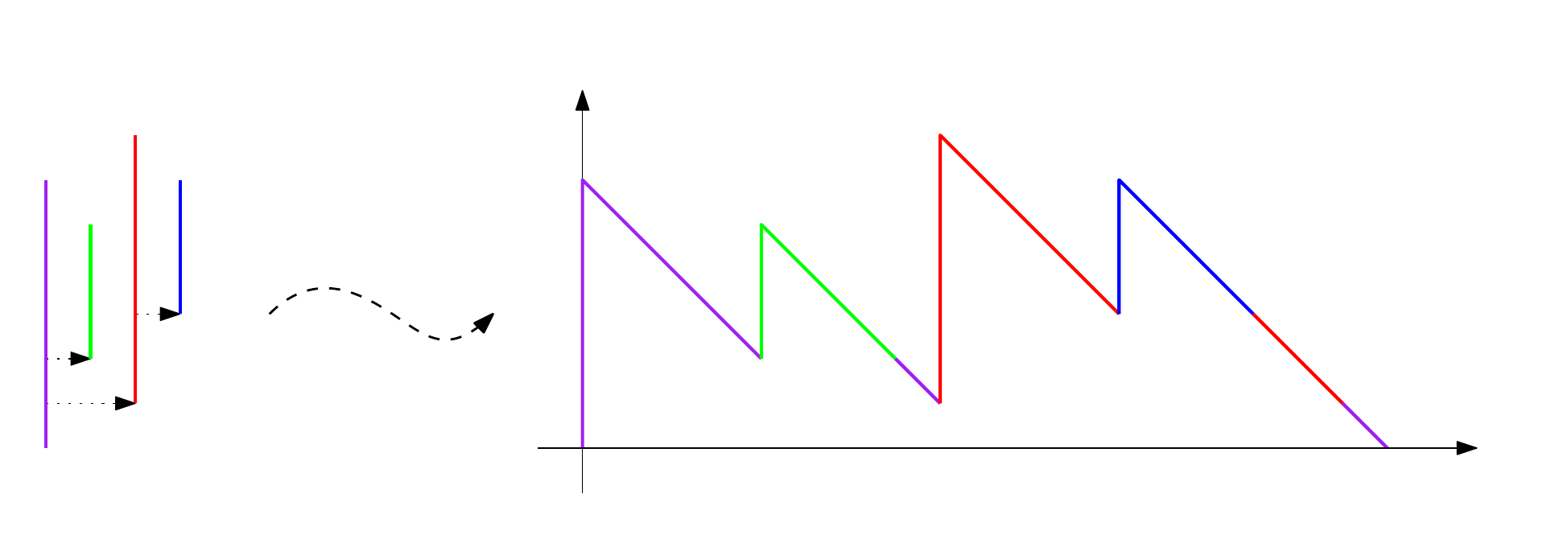}
\begin{picture}(0,0)
  \put(-142,58){$_{_{X_t}}$}
  \put(-20,13){$_{_t}$}
  \put(-200,57){$_{_{\mathbb{T}}}$}
\end{picture}
\vspace{-0.3cm}
\caption{The relationship between a genealogical tree $\mathbb{T}$ and its JCCP $X$.}
\label{Figure6}
\end{figure}

Let us first explain how an up-down oCRP$(\alpha,0)$, which we will denote by $(\mathcal{S}^y)_{y\ge 0}$, induces a genealogy. The rates stated at the beginning of the
introduction are such that every table evolves in size as an integer-valued Markov chain with up-rate $m-\alpha$ and down-rate $m$ when in state $m\ge 1$, and is removed 
when hitting state 0. Hence, tables have ``death'' times. Furthermore, while a table is ``alive'', new tables are inserted (``children born'') directly to its right at rate 
$\alpha$. Note the recursive nature of the model and the independence of table size evolutions. 
This is the genealogy \cite{geigerkersting97,lambert10} of a binary homogeneous Crump--Mode--Jagers (CMJ) branching process. Figure \ref{Figure6} captures this as a tree 
$\mathbb{T}$ with a vertical line for each table and horizontal arrows linking each parent table to its children at heights/levels corresponding to birth times.

Travelling around this genealogical tree $\mathbb{T}$ recording heights as in Figure \ref{Figure6}, yields Lambert's \cite{lambert10} jumping chronological contour process 
(JCCP) $X$: each vertical line yields a jump from a birth level to a death level, and exploration is by sliding down at unit speed and recursively jumping up at the birth levels of 
children. As jump heights are IID table lifetimes and jumps occur at the rate $\alpha$ of table insertions, this process is a L\'evy process starting from an initial 
table lifetime and stopped when reaching 0. We further mark each jump $U_i$ of the JCCP of height $\zeta_i\!=\!X_{U_i}\!-\!X_{U_i-}$ by the table size evolution 
$\big(Z^i(s)\big)_{0\le s\le\zeta_i}$, as in Figure \ref{Figure1}. In a general framework of CMJ 
processes, this is what Jagers \cite{jagers69,jagers75} studied: each individual has ``characteristics'' that vary during its lifetime. Key quantities of interest in a CMJ 
process are the characteristics $Z^i(y-X_{U_i-})$ at each time $y$, or summary statistics such as sums of these characteristics. 

Conversely, in a marked JCCP setting we can at each level $y$ extract a composition $\mathcal{S}^y$ by listing from left to right for each jump crossing level $y$ the 
size given by the mark for that level. In this construction, we refer to $(\mathcal{S}^y)_{y\ge 0}$ as the \em skewer process \em as we imagine piercing the marked JCCP at level 
$y$ and pushing together all sizes that we find at this level to form a sequence without gaps. See Figure \ref{Figure1}. This is the discrete analogue of the skewer process of 
\cite{PartA}. We provide a rigorous set-up in Section \ref{sec:skewer} and prove carefully the following result, formulated here in the setting of size evolutions on 
$\mathbb{N}_0:=\{0,1,2,\ldots\}$, which we call $Q_\alpha$-Markov chains, with Q-matrix $Q_\alpha$ whose only non-zero off-diagonal entries are $q_\alpha(m,m\!+\!1)=m\!-\!\alpha$ and
$q_\alpha(m,m\!-\!1)=m$ for $m\!\ge\! 1$.

\begin{figure} \centering \includegraphics[trim = 0mm 0mm 0mm 0mm, clip, scale=0.6]{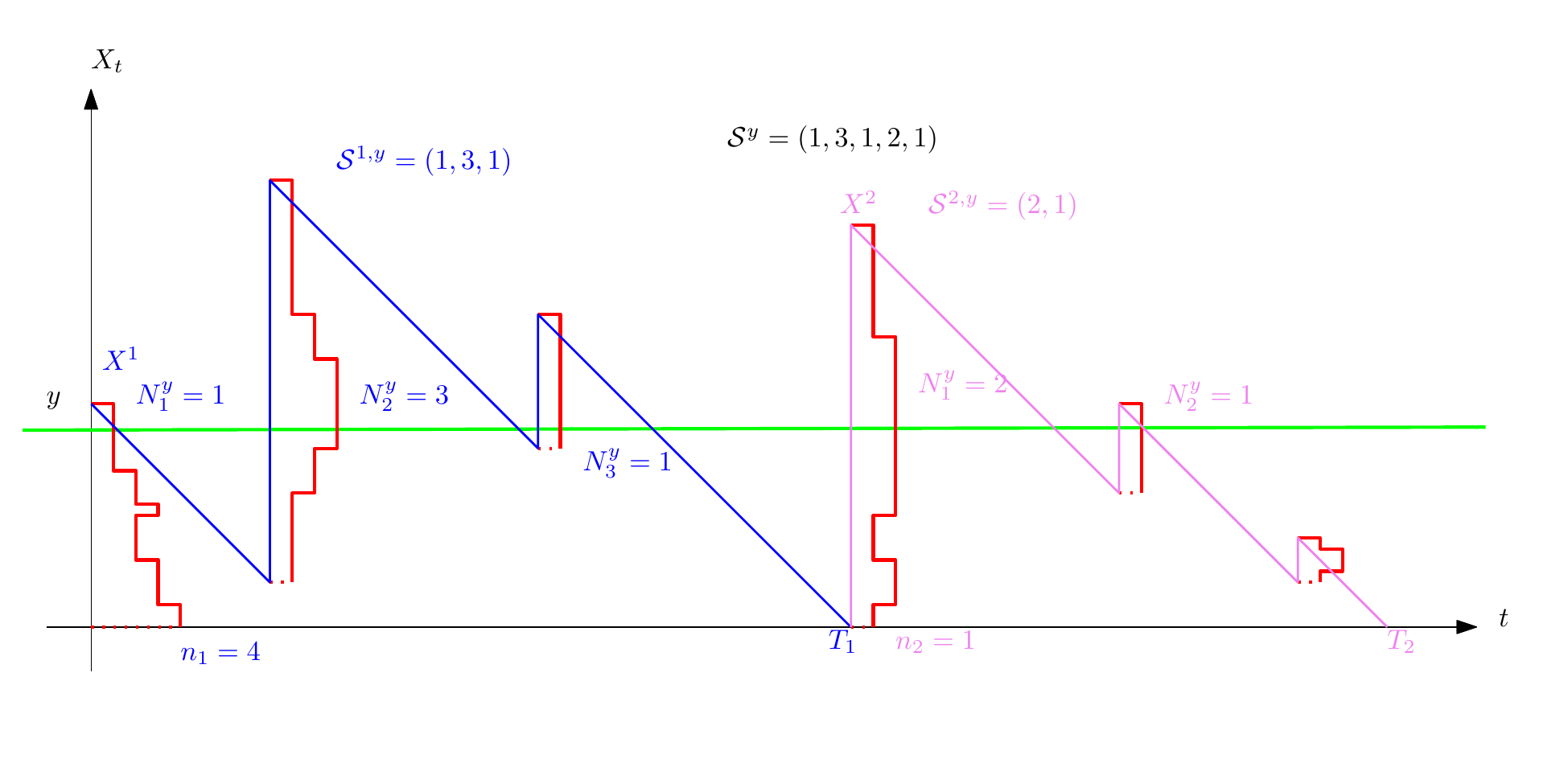}
\begin{picture}(0,0)
  \put(-318,26){\color{red}$^{^{Z^0}}$}
  \put(-275,37){\color{red}$^{^{Z^1}}$}
  \put(-228,50){\color{red}$^{^{Z^2}}$}
\end{picture}
\vspace{-0.8cm}
\caption[yt]{The skewer at level $y$ of the marked JCCP when $\theta=0$.\vspace{-0.2cm}}
\label{Figure1}
\end{figure}

\begin{thm}\label{thm:skewer} Let $0\le\alpha\le 1$ and $n_0\ge 1$. Let $Z^i$ be independent $Q_\alpha$-Markov chains with $Z^0(0)=n_0$ and $Z^i(0)=1$ for $i\ge 1$. 
  Let $\zeta_i=\inf\{s\ge 0\colon Z^i(s)=0\}$ be the absorption times. Let $(J_t)_{t\ge 0}$ be an independent Poisson process of rate $\alpha$ and \vspace{-0.2cm}
  \begin{equation}\label{eqn:Levyprocess} X_t=-t+\sum_{i=0}^{J_t}\zeta_i,\quad t\ge 0,\qquad\mbox{and}\qquad T=\inf\{t\ge 0\colon X_t=0\}.\vspace{-0.2cm}
  \end{equation}
  Consider $(X,Z):=\big((X_t)_{0\le t\le T},(Z^i)_{0\le i\le J_T}\big)$, with $Z^i$ as the mark of the $i$-th jump of $X$. 
  Then the skewer process of $(X,Z)$ is an up-down oCRP$(\alpha,0)$ starting from $(n_0)\in\mathcal{C}$.
\end{thm}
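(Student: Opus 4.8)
The plan is to identify the skewer process $(\mathcal{S}^y)_{y\ge0}$ as a continuous-time Markov chain on $\mathcal{C}$ by establishing its Markov property at a generic level and then reading off its jump rates, which I expect to match exactly the rates of the up-down oCRP$(\alpha,0)$ listed in the introduction (recall $\theta=0$). Throughout I use the rigorous skewer set-up of Section \ref{sec:skewer}: the $i$-th jump of $X$ has birth level $X_{U_i-}$ and death level $X_{U_i}=X_{U_i-}+\zeta_i$, the table it encodes has size $Z^i(y-X_{U_i-})$ at each level $y\in[X_{U_i-},X_{U_i})$, and $\mathcal{S}^y$ lists these sizes over all jumps straddling level $y$, in the left-to-right order inherited from contour time. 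As a first observation, the root jump (encoding $Z^0$) has birth level $0$, so for small $y>0$ only table $0$ straddles level $y$ and $\mathcal{S}^y=(Z^0(y))$; right-continuity and $Z^0(0)=n_0\ge1$ then give $\mathcal{S}^0=(n_0)$, the asserted starting state.

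Next I would establish the Markov property by splitting $(X,Z)$ at a fixed level $y_0$. Conditionally on $\mathcal{S}^{y_0}=(m_1,\ldots,m_k)$, I claim the marked data above level $y_0$ decomposes into independent pieces depending on the past only through $(m_1,\ldots,m_k)$. For each straddling table $i$, the Markov property of the $Q_\alpha$-chain $Z^i$, applied at age $y_0-X_{U_i-}$ under the conditioning $Z^i(y_0-X_{U_i-})=m_i$, continues its size from $m_i$ as a fresh $Q_\alpha$-chain independent of the past; here the key point is that table $i$ being \emph{alive} at $y_0$ is exactly the positivity condition $m_i>0$, so no further information about ages or birth levels leaks in. The births occurring at levels $>y_0$ should form, by memorylessness of the rate-$\alpha$ birth structure along each branch, a fresh rate-$\alpha$ family of newborn tables, each starting an independent $Q_\alpha$-chain from $1$ and spawning its own independent subtree. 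Together with right-continuity this yields the time-homogeneous Markov property.

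With this in hand I would read the infinitesimal rates from a state $(m_1,\ldots,m_k)$. Each straddling table $i$ independently moves up at rate $m_i-\alpha$ and down at rate $m_i$, the off-diagonal entries $q_\alpha(m_i,m_i\pm1)$ of $Q_\alpha$: the up-move is the oCRP up-step at table $i$ (rate $n_i-\alpha$), and the down-move is the oCRP down-step at table $i$ (rate $n_i$, deleting the table when $m_i=1$, i.e.\ when $Z^i$ hits $0$). Each straddling table additionally spawns a size-$1$ newborn directly to its right at rate $\alpha$, which is the oCRP new-table up-step; since every table descends from the root and is born to the right of its parent, no leftmost insertion ever occurs, matching $\theta=0$. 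These are precisely the rates of the up-down oCRP$(\alpha,0)$, so $(\mathcal{S}^y)_{y\ge0}$ is such a chain started from $(n_0)$.

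The step I expect to be the main obstacle is the one hidden inside the Markov argument: translating the contour-time description of the construction---a single rate-$\alpha$ Poisson process $J$ driving all jumps of $X$ in the order of $t$---into the level-indexed description the skewer requires, namely that each alive table independently emits offspring at rate $\alpha$ in the level variable $y$, each inserted in the correct left-to-right position. This is the Ray--Knight-type core of the theorem: recovering branching (CMJ) dynamics in the level variable from the contour structure of a spectrally positive L\'evy process. I would make it rigorous through an excursion decomposition of $X$ at its passage across $y_0$, using the strong Markov property of $X$ and the i.i.d.\ nature of its marked jumps, and carefully tracking how the recursive contour exploration orders the newborn subtrees, so that the positivity conditioning on the marks coincides with aliveness and both the independence across tables and their left-to-right order are preserved.
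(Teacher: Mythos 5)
Your outline is aligned with the paper's strategy in its broad strokes: identify the starting state, establish a Markov-type decomposition, and read off the $3k$ competing exponential rates (up at $n_i-\alpha$, down at $n_i$, insertion to the right at $\alpha$), noting that no leftmost insertion can occur, which is why $\theta=0$. The rate bookkeeping and the initial-state identification $\mathcal{S}^0=(n_0)$ are correct.

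However, the step you yourself flag as the main obstacle is where the entire content of the theorem lies, and your sketch does not close it. Invoking ``memorylessness of the rate-$\alpha$ birth structure along each branch'' is circular at that point: that each alive table emits children at rate $\alpha$ \emph{in the level variable}, inserted in the correct left-to-right position, is precisely what must be deduced from a construction in which jumps occur at rate $\alpha$ in \emph{contour time}. The paper closes this with two concrete devices absent from your proposal. First, it decomposes the unstopped marked L\'evy process at its running minimum: by the strong Markov property applied at the excursion endpoints $\tilde L_j,\tilde R_j$, the depths $\tilde A_j=X_0-X_{\tilde R_j}$ have IID ${\rm Exp}(\alpha)$ spacings, independent of the IID $P_1$-distributed marked excursions. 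Second --- the point your ``tracking the ordering'' gestures at but does not resolve --- the contour meets the root's children in \emph{decreasing} level order, while the skewer scans levels upward; the paper fixes this by time-reversing the resulting Poisson process at the level $\zeta_\varnothing=X_0$, legitimate exactly because $(X_0,Z^0)$ is independent of the excursion data. This yields the key structural statement (S): given $Z^0$, children attach at the points of a rate-$\alpha$ Poisson process on $[0,\zeta_\varnothing]$, with conditionally IID $P_1$ subtrees. Finally, rather than conditioning at a fixed level $y_0$ --- which is delicate, since the natural contour-time filtration reveals whole marks and hence information \emph{above} level $y_0$ --- the paper runs an induction along the successive jump levels $Y_m$ of the skewer process itself, with inductive hypothesis that, given the history and $\mathcal{S}^{Y_m}=(n_1,\ldots,n_k)$, the marked excursions above $Y_m$ are independent with laws $P_{n_1},\ldots,P_{n_k}$; the competing-clock argument and lack of memory then simultaneously identify the rates and propagate the hypothesis. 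Your fixed-level route could be made rigorous, but only by supplying essentially these same ingredients (a recursive use of (S) together with the reversal), so as written the proposal assumes the Ray--Knight core rather than proving it.
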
 

This extraction of a Markov process from the level sets of another process is in the spirit of the Ray--Knight theorems that identify the
local time process of suitably stopped Brownian motion as squared Bessel processes, see e.g.\ \cite{revuzyor99}.

We also generalise in three directions: first, to start from any $(n_1,\ldots,n_k)\in\mathcal{C}$, we concatenate $k$ independent copies of $(X,Z)$ replacing $n_0$ 
by $n_1,\ldots,n_k$, respectively. Second, to obtain an up-down oCRP$(\alpha,\theta)$ for $\theta>0$, we add a similar construction $\big((X_t)_{t<0},(Z^{-i})_{i\ge 1}\big)$ to provide 
left-most tables, as well as their ``children'' and further ``descendants''. See Figure \ref{Figure2}. Third, if we replace $Q_\alpha$ by other Q-matrices on $\mathbb{N}_0$ subject to conditions that
ensure appropriate absorption in 0, and if we appropriately relax the restriction that new tables always start from a single customer, we still obtain a Markovian skewer process, which
we refer to as a \em generalised up-down oCRP$(\alpha,\theta)$. \em See Section \ref{sec24} for details. We remark that laws of absorption times of continuous-time Markov chains
are called ``phase-type distributions.'' The associated JCCPs with phase-type jumps have been studied in other contexts \cite{AAP2004}. A feature of phase-type
distributions is that they are (weakly) dense in the space of all probability measures on $[0,\infty)$.

Returning to the setup of Theorem \ref{thm:skewer}, we establish distributional scaling limits for the table size evolution and the total number of customers in an up-down 
oCRP$(\alpha,\theta)$, as the initial number of customers tends to infinity, and for the L\'evy process $(X_t,t\ge 0)$. To formulate the limit theorems, let us discuss squared 
Bessel processes starting from $a\in[0,\infty)$ with dimension parameter $\delta\in\mathbb{R}$. We follow \cite{jaeschkeyor03,revuzyor99} and consider the unique strong solution 
of the stochastic differential equation (SDE)\vspace{-0.1cm}
\begin{equation}\label{SDE} dY(s)=\delta\, ds+2\sqrt{|Y(s)|}\,dW(s),\qquad Y(0)=a,\vspace{-0.1cm}
\end{equation}
driven by a standard Brownian motion $\big(W(s)\big)_{s\ge 0}$. For $\delta\ge 0$ this exists as a nonnegative stochastic process for all $s\ge 0$, never reaching 0 when $\delta\ge 2$, 
reflecting from 0 when $\delta\in(0,2)$ and absorbed in 0 when $\delta=0$. We denote this distribution by ${\tt BESQ}_a(\delta)$. For $\delta<0$, let 
$\zeta=\inf\{s\ge 0\colon Y(s)=0\}$ and denote by ${\tt BESQ}_a(\delta)$ the distribution of the process $\big(Y(s\wedge\zeta)\big)_{s\ge 0}$ that is absorbed at its first hitting time
of 0. 

\begin{thm}\label{thm:lim1} For all $0\le\alpha\le 1$, the table size evolution $Z_n$, i.e.\ a $Q_\alpha$-Markov chain, starting from $Z_n(0)=\lfloor nz\rfloor$, satisfies\vspace{-0.1cm}
  $$\left(\frac{Z_n(2ns)}{n}\right)_{s\ge 0}\stackrel{D}{\longrightarrow}\ {\tt BESQ}_z(-2\alpha)\qquad\mbox{under the Skorokhod topology.}\vspace{-0.1cm}$$
  Furthermore, the convergence holds jointly with the convergence of hitting times of 0.
\end{thm}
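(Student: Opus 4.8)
The plan is to prove the functional convergence by the classical generator/martingale-problem method for diffusion approximations, and then to upgrade it to joint convergence of the absorption times. Write $\tilde Z_n(s)=Z_n(2ns)/n$ for the rescaled chain, which takes values in $n^{-1}\mathbb{N}_0$, increasing by $1/n$ at rate $2n(m-\alpha)$ and decreasing by $1/n$ at rate $2nm$ when $\tilde Z_n(s)=m/n$. For a test function $g$ that is smooth and constant in a neighbourhood of $0$, set $x=m/n$ and Taylor-expand $g((m\pm1)/n)-g(m/n)=\pm n^{-1}g'(x)+\tfrac12 n^{-2}g''(x)+O(n^{-3})$. Summing the up- and down-contributions, the generator $\mathcal{G}_n$ of $\tilde Z_n$ satisfies
\begin{equation*}
(\mathcal{G}_n g)(x)=b_n(x)\,g'(x)+\tfrac12 a_n(x)\,g''(x)+O(n^{-1}),\qquad b_n(x)=-2\alpha,\quad a_n(x)=4x-\tfrac{2\alpha}{n},
\end{equation*}
so $(\mathcal{G}_n g)(x)\to -2\alpha\,g'(x)+2x\,g''(x)$ uniformly on compacts. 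This limit is exactly the generator of ${\tt BESQ}(-2\alpha)$ read off from the SDE \eqref{SDE} with $\delta=-2\alpha$, since there the diffusion coefficient is $(2\sqrt{x})^2=4x$ and the drift is $-2\alpha$; the jump sizes $1/n$ vanish, so no jump part survives in the limit.

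Next I would establish tightness and identify the limit. For the compact containment condition, note that $\tfrac{d}{dt}\EE[Z_n(t)]=-\alpha\,\PP(Z_n(t)\ge 1)\le 0$, whence $\EE[\tilde Z_n(s)]\le z$ for all $s$ and $n$; feeding this into a Doob $L^2$-inequality for the martingale part $\tilde Z_n(s)-z-\int_0^s b_n\,du$, whose predictable bracket is $\int_0^s a_n(\tilde Z_n(u))\,du$, bounds $\EE[\sup_{s\le t}\tilde Z_n(s)^2]$ uniformly in $n$ and, via the Aldous criterion, gives tightness in the Skorokhod space. The limiting martingale problem for $-2\alpha\,g'+2x\,g''$ on $[0,\infty)$ with $0$ absorbing is well posed, because the SDE \eqref{SDE} with $\delta=-2\alpha$ has a unique strong solution (as recalled before the statement), so ${\tt BESQ}_z(-2\alpha)$ is unique in law. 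Hence every subsequential weak limit of $\tilde Z_n$ solves this martingale problem and must equal ${\tt BESQ}_z(-2\alpha)$, yielding the stated Skorokhod convergence.

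It remains to add the joint convergence of the absorption times $H_n=\inf\{s:\tilde Z_n(s)=0\}$ to $H=\inf\{s:Y(s)=0\}$, which I expect to be the main obstacle, precisely because the target is absorbed at the boundary where $a(x)=4x$ degenerates, so the hitting-time functional is not globally continuous. Since $\delta=-2\alpha<2$, ${\tt BESQ}_z(-2\alpha)$ reaches $0$ in finite time, the functional $\omega\mapsto\inf\{s:\omega(s)\le\epsilon\}$ is a.s.\ continuous at the continuous path $Y$ for Lebesgue-a.e.\ level $\epsilon$, and $\inf\{s:Y(s)\le\epsilon\}\uparrow H$ as $\epsilon\downarrow 0$. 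Using the Skorokhod representation theorem to assume $\tilde Z_n\to Y$ uniformly on compacts, the bound $\inf\{s:\tilde Z_n(s)\le\epsilon\}\le H_n$ together with $\inf\{s:\tilde Z_n(s)\le\epsilon\}\to\inf\{s:Y(s)\le\epsilon\}$ along good levels gives $\liminf_n H_n\ge H$. For the reverse inequality I would control the residual time from level $\epsilon$ to absorption: applying the path convergence with initial value $\epsilon$, the rescaled time for the $Q_\alpha$-chain started at $\lfloor n\epsilon\rfloor$ to reach $0$ converges to the ${\tt BESQ}_\epsilon(-2\alpha)$ absorption time, which by Brownian scaling equals $\epsilon$ times the ${\tt BESQ}_1(-2\alpha)$ absorption time and is therefore of order $\epsilon$ in probability. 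The technical heart is making this residual estimate uniform in $n$ and interchanging the limits $n\to\infty$ and $\epsilon\downarrow 0$; granting it, $\limsup_n H_n\le H+O_P(\epsilon)$ for every $\epsilon$, so $H_n\to H$ jointly with the path convergence, completing the proof.
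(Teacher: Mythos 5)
Your first half (the functional convergence) is essentially sound and runs parallel to the paper's argument: the paper likewise verifies convergence of the drift and diffusion coefficients ($b^n\equiv-2\alpha$, $c^n(x)\to 4|x|$), but it does so for an \emph{extended} chain $Z_n^*$ on $\mathbb{Z}$ (which steps from $0$ to $-1$ at rate $\alpha$, etc.), invokes Lemma \ref{limitthm} (Jacod--Shiryaev IX.4.21) to conclude convergence to the extended process ${\tt BESQ}_z^*(-2\alpha)$ on the whole line, and only at the very end recovers the original claim by stopping, $Z_n(s)=Z_n^*\big(s\wedge\tau(Z_n^*)\big)$. Working directly on $[0,\infty)$ as you do leaves a hole in your identification step: strong uniqueness of the SDE \eqref{SDE} does not by itself give uniqueness for your martingale problem, because your test functions are constant near $0$ and hence carry no information at the boundary. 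One must additionally rule out subsequential limits that touch $0$ and re-enter $(0,\infty)$; this requires knowing that $0$ is an exit (non-entrance) boundary for $\delta\le 0$. The worry is not vacuous: with drift $0$ and diffusion coefficient $1$, both absorbed and reflecting Brownian motion solve the martingale problem restricted to test functions with $g'(0)=g''(0)=0$, and reflecting limits of chains that never hit $0$ do occur. The paper's extension to the negative half-line is designed precisely to sidestep this boundary degeneracy.

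The genuine gap is in the hitting-time part, and you flag it yourself (``granting it''). Your bound $\limsup_n H_n\le H+O_P(\epsilon)$ rests on ``applying the path convergence with initial value $\epsilon$'' to deduce that the rescaled absorption time of the chain started from $\lfloor n\epsilon\rfloor$ converges to the ${\tt BESQ}_\epsilon(-2\alpha)$ absorption time --- but convergence of absorption times is exactly the statement being proved, so as written the argument is circular, and the uniformity in $n$ needed to interchange $n\to\infty$ and $\epsilon\downarrow 0$ is assumed rather than established. Two non-circular repairs: (i) for $\alpha>0$, optional stopping applied to the martingale $Z_n(t\wedge\zeta)+\alpha(t\wedge\zeta)$ gives $\EE_m(\zeta)\le m/\alpha$, so by the strong Markov property and Markov's inequality the rescaled residual time below level $\epsilon$ is at most $\epsilon/(2\alpha\delta')$ with probability at least $1-\delta'$, uniformly in $n$; for $\alpha=0$ this mean is infinite and one needs the explicit critical birth-death formula $\PP_m(\zeta\le t)=\big(t/(t+1)\big)^m$, which is exactly what the paper uses in that case. (ii) Alternatively, adopt the paper's device: with the chain and the limit extended across $0$, for $\alpha>0$ the limit ${\tt BESQ}_z^*(-2\alpha)$ almost surely crosses strictly below $0$ immediately after $\tau$ (Corollary \ref{pathBESQ2}), so the hitting-time functional is a.s.\ continuous at the limit paths (Lemma \ref{taucty}) and Skorokhod representation yields a.s.\ convergence of hitting times with no $\epsilon$-truncation at all. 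Note that under either repair the case $\alpha=0$ requires separate treatment (the paper handles it with the explicit formula plus a tightness/subsequence argument); your scaling remark that the ${\tt BESQ}_\epsilon(0)$ absorption time is $O_P(\epsilon)$ is true but does not transfer to the chains uniformly in $n$ without precisely the estimate that is missing.
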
 

We remark that the convergence of hitting times is not automatic since hitting times are not continuous functionals on Skorokhod space \cite{jacodshiryaev02}.

\begin{thm}\label{thm:lim2} For all $0\le\alpha\le 1$ and $\theta\ge 0$, the evolution $M_n$ of the total number of customers in an up-down oCRP$(\alpha,\theta)$, 
  i.e.\ a continuous-time Markov chain whose non-zero off-diagonal entries are $q_{m,m+1}=m+\theta$ and $q_{m,m-1}=m$, $m\ge 0$, starting from $M_n(0)=\lfloor na\rfloor$, 
  satisfies\vspace{-0.1cm}
  $$\left(\frac{M_n(2ns)}{n}\right)_{s\ge 0}\stackrel{D}{\longrightarrow}\ {\tt BESQ}_a(2\theta)\qquad\mbox{under the Skorokhod topology.}\vspace{-0.1cm}$$
\end{thm}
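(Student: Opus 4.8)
The plan is to identify $M_n$ as a linear birth--death chain in exactly the same family as the $Q_\alpha$-chain of Theorem \ref{thm:lim1}---up-rate $m+\theta$, down-rate $m$, i.e.\ the per-state linear coefficient $-\alpha$ of Theorem \ref{thm:lim1} replaced by $+\theta\ge 0$---and to prove the scaling limit by generator convergence to the ${\tt BESQ}_a(2\theta)$ generator, followed by the standard upgrade from generator convergence to Skorokhod convergence via well-posedness of the limiting martingale problem and a tightness (compact-containment) estimate. Since the SDE (\ref{SDE}) with $\delta=2\theta$ admits a unique strong solution, as recalled above, its martingale problem is well posed. The only genuinely new features relative to Theorem \ref{thm:lim1} are the sign and range of the linear coefficient and the boundary behaviour at $0$ (immigration/reflection rather than killing/absorption); on the other hand, we do not need the joint convergence of hitting times here.

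First I would write down the generator of the time- and space-rescaled chain $\widetilde M_n(s):=M_n(2ns)/n$, which lives on the lattice $\frac1n\NN_0$. For smooth $f$ and a state $x=m/n$,
\[
\mathcal{G}_n f(x)=2n\Big[(nx+\theta)\big(f(x+1/n)-f(x)\big)+nx\big(f(x-1/n)-f(x)\big)\Big],\qquad x=m/n.
\]
A second-order Taylor expansion gives $\mathcal{G}_n f(x)=2\theta f'(x)+(2x+\theta/n)f''(x)+R_n(x)$, where $R_n$ collects the third- and fourth-order difference terms weighted by the (unbounded) rates $nx+\theta$ and $nx$. Hence $\mathcal{G}_n f(x)\to\mathcal{A}f(x):=2xf''(x)+2\theta f'(x)$, which is exactly the generator attached to the ${\tt BESQ}(2\theta)$ SDE (\ref{SDE}). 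One checks that, after the factor $2n$, the remainder $R_n$ is $O(1/n)$ in the relevant derivatives and so vanishes uniformly on compact subsets of $[0,\infty)$ for $f$ with bounded derivatives up to order four.

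Second, I would supply the two ingredients that turn this convergence into convergence in distribution on Skorokhod space. For tightness it suffices to establish a compact-containment condition: applying the chain's generator to $f(m)=m$ and $f(m)=m^2$ shows that $M_n(t)-\theta t$ is a martingale and that $\EE[M_n(t)^2]$ solves a linear ODE, giving $\EE[M_n(t)]=\lfloor na\rfloor+\theta t$ and $\EE[M_n(t)^2]=O(n^2)$ uniformly for $t\le 2nS$. Doob's $L^2$-maximal inequality applied to $M_n(t)-\theta t$ then bounds $\EE[\sup_{s\le S}\widetilde M_n(s)^2]$ uniformly in $n$, and compact containment follows by Markov's inequality. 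Combined with well-posedness of the ${\tt BESQ}_a(2\theta)$ martingale problem and generator convergence on a core, the standard generator-convergence criterion for Markov processes yields $\widetilde M_n\stackrel{D}{\longrightarrow}{\tt BESQ}_a(2\theta)$ under the Skorokhod topology; the initial conditions match since $M_n(0)/n=\lfloor na\rfloor/n\to a$.

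The hard part will be the behaviour at the boundary $0$ together with the unboundedness of the rates. The generator convergence above is only locally uniform, so one must choose a core for $\mathcal{A}$ that both pins down the correct boundary behaviour and is compatible with the discrete dynamics at $0$, where the chain leaves $0$ at rate $\theta$: for $\theta=0$ the state $0$ is absorbing, matching ${\tt BESQ}(0)$; for $0<\theta<1$ the limit $\delta=2\theta\in(0,2)$ reflects; and for $\theta\ge 1$ the limit never reaches $0$. One clean way to handle this is to work with test functions that are constant near $0$, so the boundary condition holds automatically, and to localise---proving convergence of $\widetilde M_n$ stopped on first entering $[0,\varepsilon]$, where the rates are bounded and the expansion is uniform---before removing the cutoff using continuity of ${\tt BESQ}$ paths and the compact-containment bound; well-posedness of (\ref{SDE}) from \cite{revuzyor99} guarantees that the localised limits patch together into the single limit ${\tt BESQ}_a(2\theta)$. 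Apart from the boundary, the argument is identical to that for Theorem \ref{thm:lim1}, with $-\alpha$ replaced by $\theta$.
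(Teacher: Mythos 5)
Your generator expansion, the moment computations, and the Doob/compact-containment tightness argument are all correct, and the broad strategy (generator or martingale-problem convergence plus well-posedness of the limiting problem) is sound; it is close in spirit to the paper, which proves this theorem by re-running the proof of Theorem \ref{thm:lim1} with the chain extended to $\RR$ via the generator $\mathcal{L} f(x) = (|x|+\theta)\big(f(x+1)-f(x)\big) + |x|\big(f(x-1)-f(x)\big)$ and applying Lemma \ref{limitthm}: after rescaling one gets $b^n(x) \equiv 2\theta$ exactly and $c^n(x) = 4|x| + 2\theta/n \to 4|x|$ uniformly, the Lindeberg condition is trivial, and weak uniqueness of \eqref{SDE} (Lemma \ref{BESQmg}) identifies the limit. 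The genuine gap in your proposal is the device you introduce for what you call the hard part: restricting to test functions that are constant near $0$ and then localizing. This fails exactly in the cases where the boundary matters, namely $0<\theta<1$ (so $\delta=2\theta\in(0,2)$, instantaneous reflection).

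Concretely, if $f$ is smooth and constant on a neighbourhood $[0,\eta]$ of $0$, then the process obtained by absorbing a ${\tt BESQ}_a(2\theta)$ at its first hitting time of $0$ also solves the martingale problem for such $f$: after absorption, $f$ of the process is constant and $\mathcal{A}f(0)=2\theta f'(0)=0$, so the compensated process is just a stopped martingale. Since for $\delta\in(0,2)$ the true ${\tt BESQ}_a(2\theta)$ hits $0$ almost surely and then re-enters $(0,\infty)$ (Proposition \ref{pathBESQ}(b),(c)), the reflecting and absorbed processes are different solutions, i.e.\ the martingale problem restricted to your test class is \emph{not} well-posed, and no generator-convergence theorem can identify the limit from it. The localization does not repair this: convergence of the chains stopped on first entering $[0,\varepsilon]$ says nothing about what they do afterwards, and invoking well-posedness of \eqref{SDE} to ``patch'' is circular, since SDE uniqueness identifies the limit only once the limit is known to solve the full martingale problem, with test functions that see the boundary --- which is precisely what you discarded. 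The fix is the opposite of your proposal: there is no need to avoid the boundary at all, because after rescaling the characteristics are bounded near $0$ ($b^n\equiv 2\theta$, $c^n(x)=4x+2\theta/n$); and test functions with $f'(0)\neq 0$ are essential. Indeed, your own martingale $M_n(t)-\theta t$ (the test function $f(x)=x$), together with $f(x)=x^2$, shows that any subsequential limit is a continuous semimartingale with characteristics $B_t=2\theta t$ and $C_t=\int_0^t 4X_s\,ds$ --- the drift martingale is exactly what rules out absorption or stickiness at $0$ --- and then Lemma \ref{BESQmg} identifies the limit as ${\tt BESQ}_a(2\theta)$. This is what the paper's route via Lemma \ref{limitthm} packages up in one step.
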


This appearance of squared Bessel processes further strengthens the connection of Theorem \ref{thm:skewer} to the Ray--Knight theorems.

\begin{thm}\label{thm:lim3} For all $0<\alpha<1$, the L\'evy process $(X_t)_{t\ge 0}$ of \eqref{eqn:Levyprocess} satisfies\vspace{-0.1cm}
  $$\left(\frac{X_{2n^{1+\alpha}t}}{2n}\right)_{t\ge 0}\stackrel{D}{\longrightarrow}\ {\tt Stable}(1+\alpha)\qquad\mbox{under the Skorokhod topology,}\vspace{-0.1cm}$$
  where ${\tt Stable}(1+\alpha)$ is the distribution of a spectrally positive stable L\'evy process with Laplace exponent 
  $\psi(\lambda)=\lambda^{1+\alpha}/2^\alpha\Gamma(1+\alpha)$.
\end{thm}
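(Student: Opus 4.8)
The plan is to pass to Laplace exponents. Writing $X_0=\zeta_0$, the centred process $X_t-X_0=-t+\sum_{i=1}^{J_t}\zeta_i$ is, independently of $\zeta_0$, a spectrally positive L\'evy process started from $0$: a compound Poisson process of rate $\alpha$ whose jumps are distributed as the absorption time $\zeta$ of a $Q_\alpha$-chain from state $1$, minus unit drift. A first-step analysis of the mean absorption times $e_m=\EE_m[\zeta]$ gives $m(e_m-e_{m-1})-(m-\alpha)(e_{m+1}-e_m)=1$, solved by $e_m=m/\alpha$, so $\EE_1[\zeta]=1/\alpha<\infty$ for $0<\alpha<1$. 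Hence the unit drift exactly compensates the jumps, $X-X_0$ is a martingale, and its Laplace exponent is
$$\psi_X(\mu)=\mu-\alpha\,\EE_1\!\big[1-e^{-\mu\zeta}\big]=\alpha\,\EE_1\!\big[e^{-\mu\zeta}-1+\mu\zeta\big]\ge 0.$$
Since $X_0/(2n)\to 0$, it suffices to prove the scaling limit for $X-X_0$; and because a limit of L\'evy processes that is itself L\'evy is determined by its one-dimensional marginals, the statement reduces to pointwise convergence of the rescaled exponents
$$\psi_n(\lambda):=2n^{1+\alpha}\,\psi_X\!\big(\lambda/2n\big)\ \longrightarrow\ \frac{\lambda^{1+\alpha}}{2^\alpha\Gamma(1+\alpha)},\qquad\lambda>0,$$
which, by the previous display, is equivalent to the single Abelian statement $\psi_X(\mu)\sim\mu^{1+\alpha}/\Gamma(1+\alpha)$ as $\mu\downarrow0$.

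The heart of the matter is the behaviour of $\psi_X$ near $0$. Writing $g(x)=e^{-x}-1+x$, we have $\psi_X(\mu)=\alpha\,\EE_1[g(\mu\zeta)]$, an Abelian/Tauberian problem controlled by the tail of $\zeta$: I aim to show $\PP_1(\zeta>t)\sim c_\alpha\,t^{-(1+\alpha)}$ as $t\to\infty$. The \emph{spatial} ingredient is exact. Viewing the $Q_\alpha$-chain as a birth--death chain with up-rates $m-\alpha$ and down-rates $m$, the scale weights $\pi_j=\prod_{i=1}^{j}i/(i-\alpha)=\Gamma(j+1)\Gamma(1-\alpha)/\Gamma(j+1-\alpha)\sim\Gamma(1-\alpha)\,j^{\alpha}$ give
$$\PP_1(\text{hit }N\text{ before }0)=\Big(\sum_{j=0}^{N-1}\pi_j\Big)^{-1}\sim\frac{1+\alpha}{\Gamma(1-\alpha)}\,N^{-(1+\alpha)}.$$
This already exhibits the exponent $1+\alpha$, reflecting that a long lifetime forces an excursion to a high level.

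The main obstacle is to convert this spatial tail into the \emph{temporal} tail of $\zeta$ with the correct constant, the two constants differing by a nontrivial factor (here $1/\Gamma(2+\alpha)$) accounting for the time spent near and below a high level. I would carry this out by one of two routes. Directly, the Laplace transforms $f_m(\mu)=\EE_m[e^{-\mu\zeta}]$ satisfy the three-term recurrence $(m-\alpha)f_{m+1}-(2m-\alpha+\mu)f_m+mf_{m-1}=0$ with $f_0=1$ and $f_m\to0$; identifying its decaying hypergeometric solution and performing a WKB-type analysis as $\mu\downarrow0$ should yield $1-f_1(\mu)=\mu/\alpha-\mu^{1+\alpha}/\big(\alpha\Gamma(1+\alpha)\big)+o(\mu^{1+\alpha})$, hence $\psi_X(\mu)\sim\mu^{1+\alpha}/\Gamma(1+\alpha)$ at once, bypassing the explicit conversion constant. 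Alternatively, probabilistically, one combines the exact spatial tail with the entrance-law asymptotics furnished by Theorem \ref{thm:lim1}: the chain from $\lfloor nz\rfloor$, time-scaled by $2n$, converges to ${\tt BESQ}_z(-2\alpha)$ \emph{jointly with} its absorption time, and the self-similarity $H_0\stackrel{d}{=}z\,H_0^{(1)}$ of the limiting hitting time fixes $c_\alpha$.

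Finally, granted $\psi_X(\mu)\sim\mu^{1+\alpha}/\Gamma(1+\alpha)$, the elementary identity $\int_0^\infty(e^{-\mu x}-1+\mu x)x^{-2-\alpha}\,dx=\mu^{1+\alpha}\Gamma(1-\alpha)/\big(\alpha(1+\alpha)\big)$ confirms the normalisation of the limiting L\'evy measure; the two displays of the first paragraph give $\psi_n(\lambda)\to\lambda^{1+\alpha}/\big(2^\alpha\Gamma(1+\alpha)\big)$; and convergence of the exponents of these spectrally positive L\'evy processes to that of a ${\tt Stable}(1+\alpha)$ process upgrades to Skorokhod convergence by the standard equivalence, for L\'evy processes, between convergence in the Skorokhod topology and convergence of one-dimensional laws \cite{jacodshiryaev02}. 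I expect the sharp constant in the tail of $\zeta$ to be the only genuinely delicate point.
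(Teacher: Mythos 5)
Your outer framework is exactly the paper's: centre away $X_0$, reduce functional convergence to convergence of one-dimensional marginals via the L\'evy-process criterion (the paper's Lemma \ref{Jacodcadlagconvergence}), and hence to pointwise convergence of the rescaled Laplace exponents, which, as you note, is equivalent to $\psi_X(\mu)\sim\mu^{1+\alpha}/\Gamma(1+\alpha)$ as $\mu\downarrow 0$. All of that is correct (including $\EE_1[\zeta]=1/\alpha$ and the normalisation check on the limiting L\'evy measure). But your proof stops precisely where the theorem's real content lies: the $\mu^{1+\alpha}$-term in the expansion of $1-f_1(\mu)=1-\EE_1[e^{-\mu\zeta}]$ is never established. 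Your Route A is in fact the paper's route --- the paper proves (Proposition \ref{taudist}) the exact identity $\EE_1(e^{-\mu\zeta})=1-\mu/\alpha+\mu^{1+\alpha}/\big(\alpha e^{\mu}\Gamma(1+\alpha,\mu)\big)$ by iterating your three-term recurrence into a continued fraction and matching it against the classical continued-fraction expansion of $e^{z}z^{-a}\Gamma(a,z)$ from Lorentzen--Waadeland --- but ``identifying its decaying hypergeometric solution and performing a WKB-type analysis \ldots should yield'' is a statement of intent, not an argument; the fractional exponent and its exact constant are precisely the non-elementary output of that identification, and nothing in your text produces them.

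Route B has a structural problem beyond being a sketch. Theorem \ref{thm:lim1} gives convergence of the chain started from $\lfloor nz\rfloor$ with $z>0$ fixed; to convert your (correct) spatial estimate $\PP_1(\mbox{hit } N \mbox{ before } 0)\sim (1+\alpha)N^{-(1+\alpha)}/\Gamma(1-\alpha)$ into the temporal tail $\PP_1(\zeta>t)\sim c_\alpha t^{-(1+\alpha)}$ with a sharp constant, you need control of the chain started from the \emph{fixed} state $1$ conditioned to survive to large times, i.e.\ an entrance-law statement with uniformity over starting levels, which Theorem \ref{thm:lim1} does not provide. Indeed the paper derives the tail of $\zeta$ (and the associated excursion-measure asymptotics) as a \emph{corollary} of Theorem \ref{thm:lim3}, not as an input to it, so this route runs against the paper's logical flow and would require genuinely new estimates of the type developed elsewhere for uniform control of such hitting-time laws. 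In short: correct reduction, correct target asymptotics, but the key step --- Proposition \ref{taudist} or an equivalent --- is missing, and it is the heart of the proof rather than a routine verification.
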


\begin{figure}\centering \includegraphics[trim = 0mm 0mm 0mm 0mm, clip, scale=0.6]{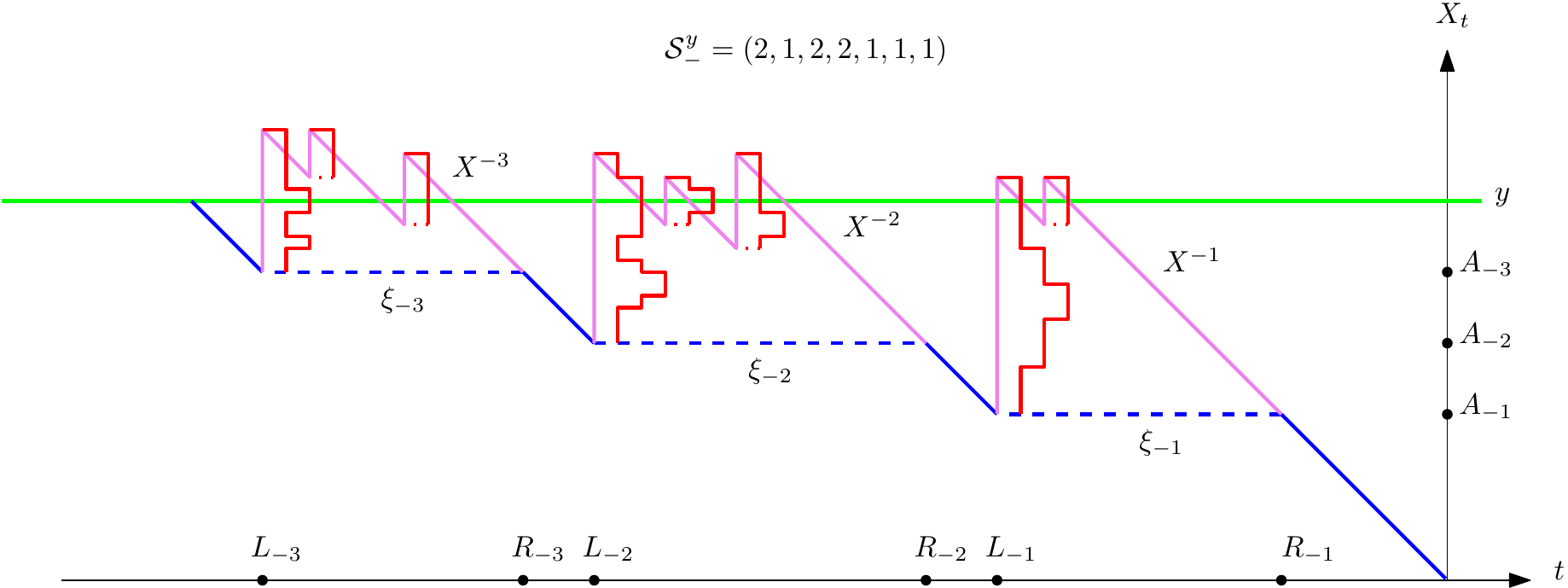}
\vspace{-0.2cm}
\caption[vt]{The skewer at level $y$ of the additional marked JCCP when $\theta>0$.\vspace{-0.2cm}}
\label{Figure2}
\end{figure}

This connects to work by Forman et al. \cite{fourauthor16,PartA,PartB}, where the starting point is a $\sigma$-finite ${\tt BESQ}(-2\alpha)$ excursion measure $\Theta$ due to
Pitman and Yor \cite{pitmanyor82}. Specifically, a 
${\tt Stable}(1+\alpha)$ L\'evy process $(\overline{X}_t)_{t\ge 0}$ is constructed from $\overline{Z}^0\sim{\tt BESQ}_z(-2\alpha)$ and a Poisson Random Measure 
$\overline{N}:=\sum_{t>0\colon \overline{Z}^t\not\equiv 0}\delta_{t,\overline{Z}^t}$ with intensity measure ${\rm Leb}\times\Theta$ by using the excursion lengths $\overline{\zeta}_t:=\inf\{s\ge 0\colon \overline{Z}_s^t=0\}$ as jump 
heights in a compensating limit\vspace{-0.1cm}
$$\overline{X}_t=\lim_{\varepsilon\downarrow 0}\left(\sum_{0\le r\le t\colon\overline{\zeta}_r>\varepsilon}\overline{\zeta}_r
                                                     \ -\ t\frac{1+\alpha}{2^\alpha\Gamma(1-\alpha)\Gamma(1+\alpha)}\varepsilon^{-\alpha}\right).\vspace{-0.1cm}$$ 
Furthermore, an interval partition diffusion is extracted from $(\overline{X},\overline{N})$ using a continuous analogue of the skewer process described above. This interval
partition diffusion is called a type-1 evolution. Type-0 evolutions are obtained by specifying a  semi-group that has further intervals added on the left-hand side in a way that 
achieves some left-right symmetry. Further generalisations will be explored in \cite{fprsw19}. 
Further  \pagebreak 
to the scaling limits of the encoding L\'evy processes and integer-valued up-down chains in Theorems \ref{thm:lim1}--\ref{thm:lim3}, we believe that the discrete composition-valued and interval-partition-valued processes satisfy the following scaling limit convergence.

\begin{conj}\label{conj2}
  Let $(\mathcal{S}_n^y)_{y\ge 0}$ be a $\mathcal{C}$-valued continuous-time up-down oCRP$(\alpha,\theta)$, for each $n\ge 1$. If $n^{-1}\mathcal{S}^0_n$ converges, as $n\rightarrow\infty$, then $(n^{-1}\mathcal{S}_n^{[ny]})_{y\ge 0}$ has a 
  diffusive scaling limit, when suitably represented on a space of interval partitions. For $0<\theta=\alpha<1$ and $0=\theta<\alpha<1$, these are the type-0 and type-1 
  evolutions of \cite{PartB}.
\end{conj}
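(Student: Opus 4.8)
The plan is to attack Conjecture \ref{conj2} by leveraging the Ray--Knight representation of Theorem \ref{thm:skewer} together with the scaling limits in Theorems \ref{thm:lim1}--\ref{thm:lim3}, thereby reducing the convergence of the composition-valued process to the convergence of the underlying marked L\'evy process and the continuity of the skewer map. I would first treat the type-1 case $0=\theta<\alpha<1$, in which $\mathcal{S}_n^y$ is the skewer of the marked L\'evy process $(X_n,Z_n)$ of \eqref{eqn:Levyprocess}, and handle $\theta>0$ afterwards. Because the continuous skewer of the object $(\overline{X},\overline{N})$ of Forman et al.\ is already identified in \cite{PartA,PartB} as the type-1 evolution, the real content is to show that the discrete skewer converges to this continuous skewer.

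First I would establish joint functional convergence of the rescaled marked L\'evy process to $(\overline{X},\overline{N})$. Write $N_n=\sum_i\delta_{(U_i,Z^i)}$ for the point process recording, for each jump of $X_n$ at time $U_i$, its rescaled mark trajectory $s\mapsto Z^i(2ns)/n$. Theorem \ref{thm:lim3} already gives $X_{2n^{1+\alpha}t}/(2n)\to\overline{X}$, a spectrally positive ${\tt Stable}(1+\alpha)$ process, and Theorem \ref{thm:lim1} identifies the limit of each macroscopic initial mark (the size evolution of an $O(n)$-customer table present at level $0$) as a ${\tt BESQ}(-2\alpha)$ process started from the corresponding limiting block size, so that the assumed convergence of $n^{-1}\mathcal{S}^0_n$ supplies the initial interval partition. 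The task is to upgrade this to joint convergence and, crucially, to show that the rescaled collection of the remaining marks --- $Q_\alpha$-chains started from a single customer, hence microscopic excursions born along the way --- converges to the Poisson random measure $\overline{N}$ with intensity ${\rm Leb}\times\Theta$, where $\Theta$ is the $\sigma$-finite ${\tt BESQ}(-2\alpha)$ excursion measure of Pitman--Yor. Since the jump heights $\zeta_i$ of $X_n$ are exactly the absorption times of the marks, this convergence must be compatible with the L\'evy--It\^o decomposition underlying Theorem \ref{thm:lim3}: the drift in \eqref{eqn:Levyprocess} has to match the compensating limit defining $\overline{X}$ from $\overline{N}$. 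I would verify this by computing the Laplace transform of the rescaled excursion heights against $\Theta$ and checking agreement with $\psi(\lambda)=\lambda^{1+\alpha}/2^\alpha\Gamma(1+\alpha)$, then invoke a marked-point-process criterion to pass from one-dimensional to joint convergence.

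The analytic heart of the argument is to show that the discrete skewer map is sufficiently continuous that convergence of $(X_n,N_n)$ to $(\overline{X},\overline{N})$ forces convergence of $(n^{-1}\mathcal{S}_n^{[ny]})_{y\ge 0}$ to the continuous skewer, measured in the interval-partition metric of \cite{PartA}. The skewer is an occupation-type functional --- reading off, at each level $y$, the widths contributed by marks straddling that level --- and such functionals are notoriously discontinuous in the Skorokhod topology, exactly as hitting times and local times are (cf.\ the remark following Theorem \ref{thm:lim1}). Convergence of the ordered block sizes and of the total mass should follow relatively directly from the first step and Theorem \ref{thm:lim2}; the genuine obstacle is convergence of the $\alpha$-diversity, the continuous analogue of the number of tables, which behaves like a level-indexed local time and demands uniform control of the small excursions. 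I would adapt the skewer-continuity estimates of \cite{PartA} to the discrete setting, proving tightness of the diversity process and identifying its limit through the excursion-measure intensity obtained in the first step.

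Finally, having obtained convergence to the continuous skewer, I would treat $\theta>0$. Here the generalisation described after Theorem \ref{thm:skewer} attaches an independent left-hand construction $\big((X_t)_{t<0},(Z^{-i})_{i\ge 1}\big)$ supplying the left-most tables and their descendants; Theorem \ref{thm:lim2} governs the scaling of the total number of customers to ${\tt BESQ}(2\theta)$ and hence the mass added on the left. The plan is to show that these rescaled left-hand additions converge to the left-insertion mechanism of the type-0 semigroup of \cite{PartB}, after which the type-0 convergence follows by combining the left and right contributions. I expect propagating the skewer/diversity continuity from the discrete marked L\'evy process to the limit to be the decisive difficulty, with the $\theta>0$ left-hand bookkeeping a technical but surmountable addition; this is precisely the step that keeps the statement at the level of a conjecture.
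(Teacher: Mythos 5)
The statement you have been asked to prove is Conjecture \ref{conj2}: the paper itself offers no proof of it and explicitly leaves it open, so there is no paper proof to compare against, and your submission must be judged as a research plan. As a plan it follows exactly the route the authors signal (Ray--Knight representation via Theorem \ref{thm:skewer}, scaling limits via Theorems \ref{thm:lim1}--\ref{thm:lim3}, then continuity of the skewer map to connect with \cite{PartA,PartB}), but it is not a proof, and you concede as much in your final sentence. Two steps are genuinely missing rather than merely compressed. First, the joint convergence of the rescaled marked L\'evy process $(X_n,N_n)$ to $(\overline{X},\overline{N})$ is asserted, not proved: Theorem \ref{thm:lim3} gives only marginal convergence of the L\'evy process, and the convergence of the point process of rescaled single-customer marks to a Poisson random measure with intensity ${\rm Leb}\times\Theta$ is precisely the content of the unproved Remark closing Section \ref{sec:conseq}, where vague convergence of $2\alpha n^{1+\alpha}\PP_{1}\big((n^{-1}Z_{2nt})_{t\geq 0}\in\,\cdot\,\big)$ to $\Theta$ is only stated as something that ``can be shown.'' Matching Laplace exponents, as you propose, identifies the candidate limit but does not by itself yield joint functional convergence of the process together with its marks.

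Second, and decisively, the skewer map is not continuous in any topology for which convergence has been established, so no limit can be ``forced'' by a continuous-mapping argument. You diagnose this correctly (occupation-type functional, local-time behaviour of the diversity), but your remedy --- ``adapt the skewer-continuity estimates of \cite{PartA}'' --- is not an off-the-shelf adaptation: in the continuum those estimates rest on uniform H\"older-type control of local times of ${\tt Stable}(1+\alpha)$ processes, developed in the separate paper \cite{fourauthor16}, and the discrete analogue (uniform control, over all levels simultaneously, of the numbers and sizes of small tables in the prelimiting chain) is exactly what is absent and what keeps the statement at the level of a conjecture. The same difficulty recurs, with additional bookkeeping, in your $\theta>0$ step, where convergence of the left-hand construction to the type-0 left-insertion mechanism of \cite{PartB} is again asserted rather than argued. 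In short: your strategy is the natural one and correctly locates the obstruction, but the two steps above are named rather than carried out, so the conjecture remains unproved.
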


\subsection{Structure}

The structure of this paper is as follows. In Section \ref{sec:skewer} we make rigorous the definition of the skewer process and prove Theorem \ref{thm:skewer} and its
generalisations. In Section \ref{sec:limits} we establish the scaling limit results Theorems \ref{thm:lim1}, \ref{thm:lim2} and \ref{thm:lim3}.

\section{Proof and generalisations of Theorem \ref{thm:skewer}}\label{sec:skewer}

In Section \ref{sec21} we establish the distribution of the hitting time of 0 of $Q_\alpha$-Markov chains, before making rigorous the skewer construction of the introduction in Section \ref{subsec:skewer}. In Section \ref{sec23} we prove Theorem \ref{thm:skewer}, and Section
\ref{sec24} discusses generalisations of Theorem \ref{thm:skewer} with general initial conditions in Corollary \ref{skewerj}, to skewer constructions 
of up-down oCRP$(\alpha,\theta)$ with $\theta>0$ in Theorem \ref{skewertheta}, and to more general up-down oCRPs in Theorems \ref{MarkovJCCP} and \ref{gen2}.

\subsection{The distribution of the absorption time of a $Q_\alpha$-Markov chain}\label{sec21}

As a first step towards the proof of Theorem \ref{thm:skewer}, let us show that the process $(X_t)_{t\ge 0}$ of \eqref{eqn:Levyprocess} is well-defined with $T<\infty$ almost surely. 
Since $(X_t)_{t\ge 0}$ has independent and identically distributed jumps at rate $\alpha$ and unit downward drift, this will follow if we show that the mean of the jump height 
distribution is $1/\alpha$. Jump heights are hitting times of $0$ by $Q_\alpha$-Markov chains starting from 1. Denote by $\mathbb{P}_r$ the distribution (on the Skorokhod space 
$\mathbb{D}([0,\infty),\mathbb{R})$ of c\`adl\`ag functions $g\colon[0,\infty)\rightarrow\mathbb{R}$) of a $Q_\alpha$-Markov chain starting from $r\in\mathbb{N}_0$. To study
scaling limits in Section \ref{sec:limits}, we actually need to identify the distribution of the hitting time $\zeta$ of 0 under $\mathbb{P}_1$.

We use a classical technique relating hitting times of birth-and-death processes and continued fractions. See e.g. Flajolet and Guillemin \cite{flajoletguillemin00} for related 
developments. \pagebreak

\begin{prop} \label{taudist}

Under $\PP_1$, the hitting time $\zeta$ of $0$ has Laplace transform given by\vspace{-0.2cm}
$$
\EE_{1}(e^{-\lambda \zeta}) = 1 - \frac{\lambda}{\alpha} + \frac{\lambda^{1+\alpha}}{\alpha e^\lambda \Gamma(1+\alpha,\lambda)}\quad\ \  \textrm{if } 0 < \alpha \leq 1,\vspace{-0.2cm}
$$
where the \emph{incomplete Gamma function} $\Gamma(a,z)$ is defined as $\Gamma(a,z):= \int_{z}^{\infty} e^{-t} t^{a-1} dt$, 
 for $z > 0$, $a \in \RR$. In particular, $\zeta$ is exponential with rate $1$ if $\alpha = 1$.  Moreover, $\EE_1(\zeta) = \alpha^{-1}$ if $0< \alpha \leq 1$.
\end{prop}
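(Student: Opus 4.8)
The plan is to set up a three-term recurrence for the Laplace transforms $\phi_m(\lambda):=\mathbb{E}_m(e^{-\lambda\zeta})$ via first-step analysis, convert it into a continued fraction for the quantity of interest $\phi_1(\lambda)=\mathbb{E}_1(e^{-\lambda\zeta})$, and then recognise that continued fraction as the classical one for the incomplete Gamma function. First I would condition on the first jump of the $Q_\alpha$-Markov chain started from $m\ge 1$. The holding time in $m$ is exponential with rate $q_m=(m-\alpha)+m=2m-\alpha$, after which the chain moves to $m+1$ with probability $(m-\alpha)/q_m$ and to $m-1$ with probability $m/q_m$; since the holding time $S$ satisfies $\mathbb{E}(e^{-\lambda S})=q_m/(q_m+\lambda)$, the strong Markov property yields
$$(2m-\alpha+\lambda)\,\phi_m(\lambda)=(m-\alpha)\,\phi_{m+1}(\lambda)+m\,\phi_{m-1}(\lambda),\qquad m\ge 1,$$
with $\phi_0\equiv 1$. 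Because the chain is skip-free to the left, the hitting time from $m+1$ decomposes as the passage time from $m+1$ to $m$ plus an independent copy of the hitting time from $m$; hence $\phi_{m+1}=\phi_m\cdot\mathbb{E}(e^{-\lambda T_{m+1,m}})$, so that $0\le\phi_{m+1}\le\phi_m\le 1$ and, for $\lambda>0$, $\phi_m(\lambda)\downarrow 0$ as $m\to\infty$. These two facts single out the probabilistic solution of the second-order recurrence.

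Next I would pass to the ratios $R_m:=\phi_m/\phi_{m-1}$ for $m\ge 1$, noting that $\phi_1=R_1$ since $\phi_0=1$. Dividing the recurrence by $\phi_m$ and rearranging gives
$$R_m=\cfrac{m}{(2m-\alpha+\lambda)-(m-\alpha)R_{m+1}},$$
which unfolds into the continued fraction
$$\phi_1(\lambda)=\cfrac{1}{(2-\alpha+\lambda)-\cfrac{2(1-\alpha)}{(4-\alpha+\lambda)-\cfrac{3(2-\alpha)}{(6-\alpha+\lambda)-\cdots}}}.$$
The delicate point, which I expect to be the main obstacle, is the rigorous justification that this continued fraction converges to $\phi_1$: this is exactly the assertion that $(\phi_m)$ is the minimal (recessive) solution of the recurrence, which follows from the boundedness and decay established above by Pincherle's theorem (cf.\ \cite{flajoletguillemin00}). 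Equivalently, one truncates the chain at a level $N$, solves the resulting finite linear system to obtain the $N$-th convergent, and lets $N\to\infty$ using monotone convergence of the capped hitting times.

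Finally I would recognise the right-hand side as the classical (Legendre) continued fraction for the incomplete Gamma function. With $a=1+\alpha$ and $z=\lambda$ the standard expansion reads
$$\Gamma(1+\alpha,\lambda)=\cfrac{e^{-\lambda}\lambda^{1+\alpha}}{(\lambda-\alpha)-\cfrac{-\alpha}{(\lambda+2-\alpha)-\cfrac{2(1-\alpha)}{(\lambda+4-\alpha)-\cdots}}},$$
whose tail from the second level onward is precisely $1/\phi_1$. Writing $C:=e^{-\lambda}\lambda^{1+\alpha}/\Gamma(1+\alpha,\lambda)$ and reading off the first level gives $C=(\lambda-\alpha)+\alpha\,\phi_1$, whence
$$\phi_1(\lambda)=1-\frac{\lambda}{\alpha}+\frac{\lambda^{1+\alpha}}{\alpha\,e^{\lambda}\,\Gamma(1+\alpha,\lambda)},$$
as claimed. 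The case $\alpha=1$ then follows from $\Gamma(2,\lambda)=(1+\lambda)e^{-\lambda}$, which collapses the formula to $1/(1+\lambda)$, the Laplace transform of an $\mathrm{Exp}(1)$ law. For the mean, since $1+\alpha>1$ and $\Gamma(1+\alpha,0)=\Gamma(1+\alpha)\in(0,\infty)$, the last term is $o(\lambda)$ as $\lambda\downarrow 0$, so $\phi_1(\lambda)=1-\lambda/\alpha+o(\lambda)$ and therefore $\mathbb{E}_1(\zeta)=-\phi_1'(0^+)=\alpha^{-1}$.
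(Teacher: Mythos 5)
Correct, and essentially the paper's own argument: your ratios $R_m=\phi_m/\phi_{m-1}$ are precisely the paper's first-passage transforms $F_{m-1}$, your recurrence for them is identical to the paper's recursion \eqref{recursion}, and both proofs conclude by identifying the unfolded continued fraction with Legendre's continued fraction for the incomplete Gamma function $\Gamma(1+\alpha,\lambda)$ and then reading off the first level. The only substantive differences are in your favour: you obtain the recursion by direct first-step analysis rather than the paper's renewal-equation computation for $g_r$ and $f_r$, and you explicitly flag and address (via minimality/Pincherle, or truncation and monotone convergence) why the infinite continued fraction converges to the probabilistic solution, a point the paper passes over without comment.
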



\begin{proof}

Define $\sigma_{r}:= \inf\{t \geq 0 \colon Z_t = r \}$ with the convention that $\inf \varnothing = \infty$. Suppose that $f_r$ is the density of $\sigma_r$ under $\PP_{r+1}$ and 
$g_{r}(t):= \PP_{r}\big(Z_t = r, Z_s \geq r \textrm{ for all } s\leq t\big)$.
By an application of the strong Markov property at the first jump, 
\begin{equation}\label{Markov}
g_{r}(t) =  e^{-(2r-\alpha)t}  + \int_{0}^{t} (r-\alpha) e^{-(2r-\alpha)u} \int_{0}^{t-u} f_{r}(s) g_{r}(t-s-u) ds du.
\end{equation}
For $\lambda > 0$, we define Laplace transforms
$$F_{r}(\lambda):= \int_{0}^{\infty} f_{r}(t)e^{-\lambda t} dt,\qquad G_{r}(\lambda):= \int_{0}^{\infty} g_{r}(t)e^{-\lambda t} dt,$$
and 
\begin{equation}\label{Hr} 
  H_{r}(\lambda):= \int_{0}^{\infty} e^{-(2r-\alpha)t} e^{-\lambda t} dt = (2r-\alpha + \lambda)^{-1}.
\end{equation} 
Now, by \eqref{Markov}, 
\begin{align*}
  (r\!-\!\alpha)F_{r}(\lambda)G_{r}(\lambda)H_{r}(\lambda) 
     &= (r\!-\!\alpha)\! \int_{0}^{\infty}\!\bigg{[}\int_{0}^{t} \int_{0}^{t-s} e^{-(2r-\alpha)t} f_{r}(s) g_{r}(t\!-\!s\!-\!u)  \bigg{]} du ds e^{-\lambda t} dt\\
     &= \int_{0}^{\infty} \big{(} g_{r}(t)  -  e^{-(2r-\alpha)t} \big{)} e^{-\lambda t} dt = G_{r}(\lambda) - H_{r}(\lambda).
\end{align*}
Rearranging this and evaluating $H_{r}(\lambda)$ as in \eqref{Hr}, it follows that 
$$G_{r}(\lambda) = \frac{1}{2r-\alpha+\lambda - (r-\alpha)F_{r}(\lambda)}.$$
Now for all $\varepsilon > 0$,
\begin{align*}
\int_{t}^{t+\varepsilon} f_{r}(s) ds &= \PP_{r+1}(t < \sigma_{r} \leq t+ \varepsilon)\\
&= \sum_{k=1}^{\infty} \PP_{r+1}\big(Z_t = r+k, Z_s \geq r+1 \textrm{ for all } 0 \leq s \leq t\big) \PP_{r+k}(\sigma_r \leq \varepsilon)\\
&= g_{r+1}(t) \PP_{r+1}(\sigma_r \leq \varepsilon) + o(\varepsilon^2) \quad\ \  \textrm{as } \varepsilon \downarrow 0.
\end{align*}
Upon dividing by $\varepsilon$ and letting $\varepsilon\downarrow 0$, we see that $f_{r}(t) = (r+1)g_{r+1}(t)$. Hence for all $r\geq 0$,
\begin{equation}\label{recursion}
F_{r}(\lambda) = (r+1)G_{r+1}(\lambda) = \frac{r+1}{2r+2-\alpha+\lambda - (r+1-\alpha)F_{r+1}(\lambda)}.
\end{equation}
Now let $a>1$, $z>0$, and  define $K_{-2}(\lambda):= e^{z}z^{-a}\Gamma(a,z)$ and $K_{-1}(\lambda)$ implicitly by 
\begin{align*}
K_{-2}(\lambda) = \frac{1}{1+z-a - (1-a)K_{-1}(\lambda)}.
\end{align*}
Then 
\begin{align*}
K_{-1}(\lambda) =\frac{(1+z-a)e^{z}z^{-a}\Gamma(a,z) - 1}{(1-a)e^{z}z^{-a}\Gamma(a,z)}= \frac{1+z-a}{1-a}- \frac{1}{(1-a)e^{z}z^{-a}\Gamma(a,z)}.
\end{align*}
However, it is known by \cite[p.278, 3.3.3]{fraction08} that for any $a>1$, $z >0$,
\begin{align*} 
K_{-2}(\lambda) = e^{z}z^{-a}\Gamma(a,z) = \cfrac{1}{1+z-a - \cfrac{1(1-a)}{3+z-a - \cfrac{2(2-a)}{5+z-a - \cfrac{3(3-a)}{7+z-a-\cdots}}}}\\
=  \cfrac{1}{1+z-a - (1-a)K_{-1}(\lambda)}\: .
\end{align*}
Upon using the recursion given for $F_{r}(\lambda)$ in \eqref{recursion}, we also see that
$$F_{0}(\lambda) =  \cfrac{1}{2+\lambda-\alpha - \cfrac{2(1-\alpha)}{4+\lambda-\alpha - \cfrac{3(2-\alpha)}{6+\lambda-\alpha - \cfrac{4(3-\alpha)}{8+\lambda-\alpha-\cdots}}}}\: .$$ 
Substituting the continued fraction expression for $K_{-2}(\lambda)$ into the expression for $K_{-1}(\lambda)$, it follows that 
$$K_{-1}(\lambda)= \cfrac{1}{3+z-a - \cfrac{2(2-a)}{5+z-a - \cfrac{3(3-a)}{7+z-a - \cfrac{4(4-a)}{9+z-a-\cdots}}}}\: .$$
As $\alpha \neq 0$, this can be identified with $F_{0}(\lambda)$ by setting $a = 1+\alpha, z = \lambda$, from which it follows that 
\begin{equation*}
F_{0}(\lambda)= K_{-1}(\lambda) = \frac{\lambda - \alpha}{-\alpha} - \frac{1}{-\alpha e^{\lambda} \lambda^{-(1+\alpha)}\Gamma(1+\alpha,\lambda)} = 1 - \frac{\lambda}{\alpha} + \frac{\lambda^{1+\alpha}}{\alpha e^\lambda \Gamma(1+\alpha,\lambda)}.
\end{equation*}
Since $F_{0}(\lambda)$ is the Laplace transform of $\zeta$ under $\PP_{1}$, by the definition of $f_{0}(\lambda)$, the proposition is proved upon observing that $\EE_{1}(\zeta) = -F'_{0}(0)$, and that for $\alpha=1$ the expression simplifies to the Laplace transform of the claimed exponential distribution.
\end{proof}

The fact that we obtain the exponential distribution when $\alpha=1$ is actually an elementary consequence of the observation that in this case $q_{1,0}=1$ and $q_{1,2}=0$.

\subsection{Construction of skewer processes in the setting of Theorem \ref{thm:skewer}}\label{subsec:skewer}

Consider the setting of Theorem \ref{thm:skewer}, i.e.\ for some fixed $0\le\alpha\le 1$, let $(J_t)_{t\ge 0}$ be a Poisson process of rate $\alpha$, and let $(Z^i,i\ge 0)$ be
an independent family of independent $Q_\alpha$-Markov chains starting from $Z^0(0)=n_0\ge 1$ for $i=0$ and from $Z^i(0)=1$ for $i\ge 1$, with absorption times 
$\zeta_i=\inf\{s\ge 0\colon Z^i(s)=0\}$, $i\ge 0$. By construction, the process $X_t:=-t+\sum_{0\le i\le J_t}\zeta_i$, $t\ge 0$, is a compound Poisson process with negative unit
drift, starting from $X_0=\zeta_0>0$. We use the convention $X_{0-}:=0$. While we will eventually stop $(X_t)_{t\ge 0}$ at $T:=\inf\{t\ge 0\colon X_t=0\}$, it will sometimes be useful to have access to the 
process with infinite time horizon. By Proposition \ref{taudist}, $(X_t)_{t\ge 0}$ is recurrent. As $(X_t)_{t\ge 0}$ has no negative jumps, $T<\infty$ almost surely, and so 

\begin{itemize}\item[(*)] $(X,Z):=\big((X_t)_{0\le t\le T},(Z^i)_{0\le i\le J_T}\big)$ is as follows. $X$ is c\`adl\`ag with finitely many jumps at times $U_i$, 
  $0\le i\le J_T$. For each jump, we have $\zeta_i:=X_{U_i}-X_{U_i-}>0$, and $Z^i=\big(Z^i(s)\big)_{0\le s<\zeta_i}$ is a positive integer-valued c\`adl\`ag path, $0\le i\le J_T$. 
\end{itemize}

We will define the skewer process of $(X,Z)$ as a function of $(X,Z)$ for any pair $(X,Z)$ that satisfies (*). See Figure \ref{Figure1} for an 
illustration. In this setting, first introduce notation for 
\begin{itemize}
  \item the pre- and post-jump heights $B_i:=X_{U_i-}$ and $D_i:=X_{U_i}$ of the $i$-th jump, which we will interpret as birth and death levels, $0\le i\le J_T$;
  \item the number $K^{y}:=\#\{0\!\le\! i\!\le\! J_T\colon B_i\!\leq\! y\!<\!D_i\}$ of jumps that cross level $y$, $y\!\ge\! 0$;
  \item for each level $y$ with $K^y\ge 1$, the indices $I_{1}^{y}:=\inf\{i\!\geq\! 0\colon B_{i}\!\leq\! y\!<\!D_{i}\}$ and 
    $I_{l}^{y}:=\inf\{i\!\geq\! I_{l-1}^{y}\!+\!1\colon B_{i}\!\leq\! y\!<\!D_{i}\}$, $2\le l\le K^y$, of jumps that cross level $y$;
  \item and the values $N^{y}_{l}:=Z^{I_{l}^{y}}(y-B_{I_l^{y}})$ of the integer-valued paths when crossing level $y$, i.e. evaluated $y-B_{I_l^y}$ beyond the birth level 
    $B_{I_l^y}$, for each $1\!\leq\! l\!\leq\! K^{y}$, $y\!\geq\! 0$.
\end{itemize}

\begin{defn}[Skewer process]
Using the notation above, define the \emph{skewer process} $(\Ss^{y})_{y\geq 0}$ of $(X,Z)$ by $\Ss^{y} := (N^{y}_1,N^y_2,\ldots,N^y_{K^y})$, $y\ge 0$.
\end{defn}
The skewer process introduced here is a discrete analogue of an interval-partition-valued process introduced in \cite{PartA} in a setting where $X$ may have a dense set 
of jump times, each marked by a non-negative real-valued path. Whether composition-valued or interval-partition-valued, the skewer process collects, in 
left to right order, the values of the paths at level $y$. This terminology stems from our visualisation in Figure \ref{Figure1}, where we imagine a skewer piercing the graph of 
the marked $X$-process at level $y$. The values encountered at level $y$ are pushed together without leaving gaps, as if on a skewer.

\subsection{Proof of Theorem \ref{thm:skewer}}\label{sec23}

\begin{figure}[t] \centering \includegraphics[trim = 0mm 0mm 0mm 0mm, clip, scale=0.6]{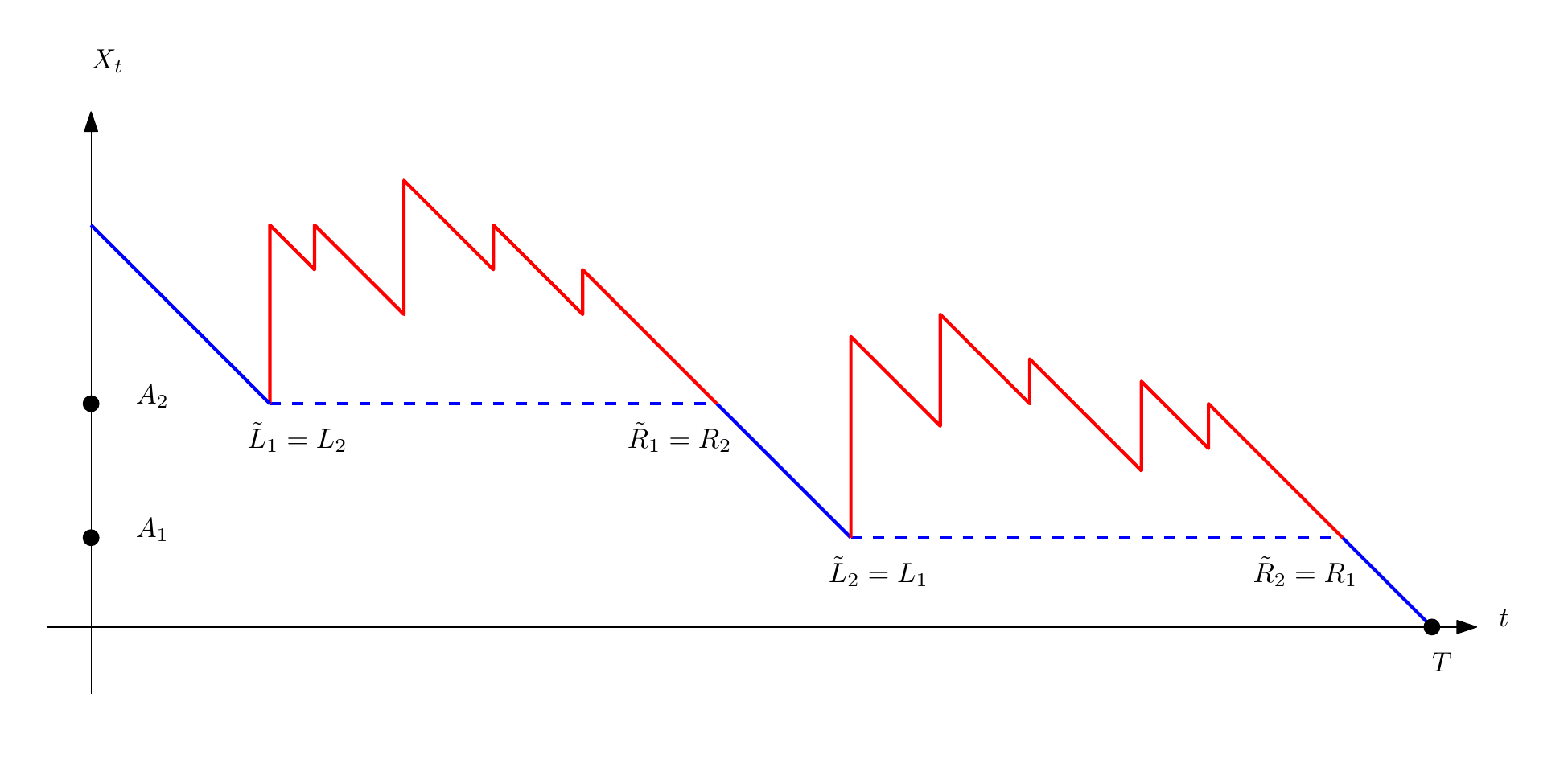}\vspace{-0.3cm}
\caption[yt]{An excursion of the JCCP with notation used in the proof of Theorem \ref{thm:skewer}.}
\label{Figure!!!}
\end{figure}

\begin{proof}[Proof of Theorem \ref{thm:skewer}]

Let $(X_t)_{t\geq 0}$ be the (unstopped) L\'{e}vy process constructed from $(Z^{i})_{i\geq 0}$ and $(J_{t})_{t\geq 0}$ as before via $X_{t}:=-t+\sum_{i=0}^{J_{t}}\zeta_{i}$, where 
$\zeta_i=\inf\{s\ge 0\colon Z^i(s)=0\}$, noting that $Z^0 \sim\PP_{n_0}$ and $Z^i \sim\PP_1$ for $i \geq 1$, independently, and that $(J_{t})_{t\geq 0}$ is an independent 
Poisson process of rate $\alpha$, a PP$(\alpha)$, for short. Denote the distribution of the marked process $(X,Z):=\big((X_t)_{0\le t\le T},(Z^i)_{0\le i\le J_T}\big)$, stopped at $T=\inf\{t\ge 0\colon X_t=0\}$, by $P_{n_0}$.

It may be useful to view $\zeta_\varnothing:=X_0$ as the ``lifetime'' of a table of initial size $n_0$. In a genealogical interpretation, where the individuals in the genealogy are tables and the children of each table are those inserted directly to the right at rate $\alpha$, this initial table would be the ancestor, hence notation $\varnothing$. We will not require notation for a full genealogical representation of the marked L\'evy process.
Instead, we proceed to decompose $(X,Z)$ under $P_{n_0}$ into the excursions above the minimum
that can be viewed as recursively capturing the genealogies of the children of the ancestor.

The skewer process makes the size of this initial table, varying according to $Z^0$, stay the left-most entry at all levels $0\le y<X_0$. Let us 
show that there is a PP$(\alpha)$ corresponding to the insertion of tables directly to the right of the initial table. By the definition of the skewer process, the $i$-th jump of 
$X$ corresponds to the second entry in the evolving composition (a table adjacent to the initial table) if its pre-jump level is a running minimum 
$X_{U_i-}=\inf_{0\le t\le U_i}X_t\ge 0$. These pre-jump levels are represented in Figure \ref{Figure!!!} as a collection of points on the vertical axis. 
Let $\tilde{R}_0:=0$ and define, also beyond $T$, the left and right endpoints of the excursions of $(X_t)_{t\ge 0}$ above the minimum\vspace{-0.1cm}
$$
\tilde{L}_{j}:= \inf\{t \geq \tilde{R}_{j-1}\colon X_t \neq X_{t-} \},\quad\tilde{R}_{j}:= \inf\{t \geq \tilde{L}_{j}\colon X_t = X_{\tilde{L}_{j}-} \},\quad \ \ \textrm{for } j \geq 1.\vspace{-0.2cm}
$$
Write $K_\varnothing:=\sup\{j \!\geq\! 0\colon \tilde{R}_{j} \!<\! T\}$ for the number of such excursions before $T$ and reverse order so that, for $1\!\le\! j\!\le\! K_\varnothing$, the interval
$(L_j,R_j):= (\tilde{L}_{K_{\varnothing}-j+1},\tilde{R}_{K_{\varnothing}-j+1})$ captures the $j$-th excursion in the order encountered by the skewer process as a process indexed by level 
$y\!\ge\! 0$. Write \vspace{-0.1cm} $\tilde{A}_{j} := X_0 \!-\! X_{\tilde{R}_j}$, for $j\! \geq\! 1$ and $A_j := X_{R_j}$ for any $1\!\leq\! j\! \leq\! K_{\varnothing}$, where $\tilde{A}_0:= 0$, for the starting 
levels of these excursions, respectively below $X_0$ and above 0.

We claim that $(\tilde{A}_j)_{j \geq 1}$ are the points of an unstopped PP$(\alpha)$ independent of the IID sequence of \em marked excursions \vspace{-0.1cm} \em  
$\big((X_{t+\tilde{L}_j}\!-\!X_{\tilde{R}_j})_{0\le t\le\tilde{R}_j-\tilde{L}_j},(Z^{\widetilde{I}_j+i})_{0\le i\le\widetilde{I}_{j+1}-\widetilde{I}_j-1}\big)$, where 
$\widetilde{I}_j\!:=\!J_{\widetilde{L}_j}$ identifies the first jump in the $j$-th excursion, $j\!\ge\! 1$. To show this, \vspace{-0.1cm} observe that $\tilde{A}_1\!=\! X_0 \!-\! X_{\tilde{R}_1} \!=\! X_0 \!-\! X_{\tilde{L}_{1}-}=\tilde{L}_{1}=U_1$ which is exponentially distributed with rate $\alpha$. For $j \geq 1$, apply the strong Markov property of the marked L\'evy process at times $\tilde{L}_j$ and 
$\tilde{R}_{j}$. Then the inter-arrival times $(\tilde{A}_j - \tilde{A}_{j-1})_{j \geq 1}$ are all IID ${\rm Exp}(\alpha)$ random variables independent of the IID sequence of 
marked excursions, as claimed. In particular, the $(\tilde{A}_j)_{j\geq 1}$ are the points of a PP$(\alpha)$ as claimed. 

Note that since $(X_0,Z^0)$ and $\big((X_t - X_0)_{t\geq 0},(Z^i)_{i\ge 1}\big)$ are independent, it follows that $(X_0,Z^0)$ is independent of the $(\tilde{A}_j)_{j \geq 1}$ and of the marked excursions as these can be defined solely in terms 
of $\big((X_t - X_0)_{t\geq 0},(Z^i)_{i\ge 1}\big)$. 
By time-reversal of the PP$(\alpha)$ at the independent time 
$\zeta_\nl=X_0$, we obtain the following. \vspace{-0.1cm}
\begin{enumerate}\item[(S)] Under $P_{n_0}$, we have $Z^0\sim\mathbb{P}_{n_0}$ with absorption time $\zeta_\varnothing$. Given $Z^0$, the $(A_j)_{1 \leq j \leq K_{\varnothing}}$ 
  are the jump times of a PP$(\alpha)$ restricted to $[0,\zeta_\varnothing]$, and given also $(A_j)_{1\le j\le K_\varnothing}$, the $K_\varnothing$ marked excursions 
  are conditionally IID, each with distribution $P_1$.
\end{enumerate}

We will now verify the jump-chain/holding-time structure of the skewer process by an induction on the steps of the jump chain. Note that the total number of steps $M$ of the 
jump chain is positive and finite almost surely. We denote by $Y_m$, $1\le m\le M$, the levels of those $M$ steps of the skewer process. We also set $Y_m=\infty$ for $m\ge M+1$.

Consider the inductive hypothesis that the first $m\wedge M$ steps satisfy the jump-chain/holding-time structure of an up-down oCRP$(\alpha,0)$, and conditionally given those 
states of the jump chain and holding time variables, and given that $\mathcal{S}^{Y_m}\!=\!(n_1,\ldots,n_k)$, the marked excursions of $(X,Z)$ above $Y_m$ are independent  
with distributions $P_{n_i}$, $1\!\le\! i\!\le\! k$. 

Assuming the inductive hypothesis for some $m\ge 0$ and $(n_1,\ldots,n_k)\in\mathcal{C}$, we note that if $Y_m=\infty$ or if $Y_m<\infty$ with $(n_1,\ldots,n_k)=\varnothing$ the induction proceeds trivially, while if $Y_m<\infty$ with $(n_1,\ldots,n_k)\neq\varnothing$, we may assume that there are independent exponential clocks
\begin{enumerate}\item[(i)] of rate $n_i-\alpha$, from the $Q_\alpha$-Markov chain $Z^0$ under $P_{n_i}$ as in (S), triggering a transition into state $(n_1,\ldots,n_{i-1},n_i+1,n_{i+1},\ldots,n_k)$, for each $1\le i\le k$,
  \item[(ii)] of rate $n_i$, from the $Q_\alpha$-Markov chain $Z^0$ under $P_{n_i}$ as in (S), triggering a transition into state $(n_1,\ldots,n_{i-1},n_i-1,n_{i+1},\ldots,n_k)$,\,or to $(n_1,\ldots,n_{i-1},n_{i+1},\ldots,n_k)$ if $n_i=1$,
    for each $1\le i\le k$,
  \item[(iii)] of rate $\alpha$, from the PP$(\alpha)$ under $P_{n_i}$ as in (S), triggering a transition into state $(n_1,\ldots,n_{i},1,n_{i+1},\ldots,n_k)$, for each $1\le i\le k$. 
\end{enumerate}
Denoting by $E_{m+1}$ the minimum of these $3k$ exponential clocks, the lack of memory property of the other exponential clocks makes the system start afresh, with either 
$\mathcal{S}^{Y_{m+1}}=\varnothing$ or with independent marked excursions above $Y_{m+1}=Y_m+E_{m+1}$. Specifically, in (iii), we note that the post-$E_{m+1}$ PP$(\alpha)$ is 
still PP$(\alpha)$, and is independent of the first, $P_1$-distributed excursion, as required for the induction to proceed.  
\end{proof}

\subsection{Generalisations of Theorem \ref{thm:skewer}}\label{sec24}

Ultimately, we will describe how to extend Theorem \ref{thm:skewer} to the $(\alpha,\theta)$-case with more general integer-valued Markov chains and started from a general state 
$(n_1,\ldots,n_k)\in\Cc$. We start by describing how to start from the general state $(n_1,\ldots,n_k)\in\Cc$ in the basic $(\alpha,0)$-case. To this end, define the concatenation $uv\in\Cc$ of 
$u,v \in \mathcal{C}$ with $u=(u_1,\ldots,u_n)$ and $v=(v_1,\ldots,v_m)$ by $uv:= (u_1,\ldots, u_n,v_1,\ldots, v_m)$. 

\begin{cor} \label{skewerj}
For $(n_1,\ldots,n_k)\in\mathcal{C}$, consider skewer processes $(\Ss^{j,y})_{y\geq 0}$ arising from independent copies of the $(X,Z)$ of 
Theorem \ref{thm:skewer}, with  $n_0$ replaced by $n_j$, so that $\Ss^{j,0}\!=\!(n_j)\!\in\!\Cc$, $1\!\le\! j\le\! k$. Then the process $(\Ss^{y})_{y\geq 0}$ obtained by concatenation $\Ss^y=\Ss^{1,y}\Ss^{2,y}\cdots\Ss^{k,y}$ evolves as an up-down 
oCRP$(\alpha,0)$ starting from $(n_1,\ldots,n_k)\in\Cc$.
\end{cor}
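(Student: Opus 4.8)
The plan is to deduce the corollary from Theorem \ref{thm:skewer} by a lumpability argument, rather than re-running the excursion decomposition. By Theorem \ref{thm:skewer} together with the independence of the $k$ copies, the joint process $(\mathcal{S}^{1,y},\ldots,\mathcal{S}^{k,y})_{y\ge 0}$ is a Markov chain on $\mathcal{C}^k$ whose $j$-th coordinate is an up-down oCRP$(\alpha,0)$ started from $(n_j)$, with all coordinates evolving independently. The concatenation map $(\mathcal{S}^{1,y},\ldots,\mathcal{S}^{k,y})\mapsto \mathcal{S}^y=\mathcal{S}^{1,y}\cdots\mathcal{S}^{k,y}$ is a deterministic function of this Markov chain, so it suffices to show that this function is lumpable, i.e.\ that the transition rates out of $\mathcal{S}^y$ depend only on the concatenated state $\mathcal{S}^y$ and coincide with those of an up-down oCRP$(\alpha,0)$.

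First I would fix a concatenated state $(m_1,\ldots,m_K)\in\mathcal{C}$ and enumerate the possible one-step transitions of the superposition of the $k$ independent chains, reading off their rates from the bullet points in the introduction specialised to $\theta=0$. The within-block up-steps at an existing table contribute rate $m_\ell-\alpha$ at each position $\ell$, and the down-steps contribute rate $m_\ell$ at each position $\ell$; both rates manifestly depend only on $m_\ell$ and not on the block decomposition. The only transitions requiring care are the rate-$\alpha$ insertions of a fresh table of size $1$.

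The key observation is that, because $\theta=0$, no copy ever inserts a table to the left of its own left-most table; every insertion is immediately to the right of some existing table, at rate $\alpha$. Consequently, for each position $\ell$ with $1\le\ell\le K$, whether $\ell$ is interior to a block or is the last table of a block $j<k$, the total rate at which a $1$ is inserted immediately to the right of position $\ell$ is exactly $\alpha$: an interior insertion is a within-block event, while an insertion after the last table of block $j$ lands precisely between blocks $j$ and $j+1$ in the concatenation, and there is no competing left-insertion event at the left-most table of block $j+1$ since $\theta=0$. Thus the insertion rate is $\alpha$ at every position, again independent of the hidden block boundaries. Matching these rates against the definition of up-down oCRP$(\alpha,0)$ in the introduction completes the identification, and since the rates depend only on $(m_1,\ldots,m_K)$, the concatenated process is autonomously Markov with the asserted law started from $(n_1,\ldots,n_k)$.

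I expect the main obstacle to be precisely this lumpability step: the concatenated state forgets where the block boundaries lie, so one must check that no transition rate secretly depends on that lost information. The delicate cases are the between-block insertions and the disappearance of a block that has reached $\varnothing$ (which is absorbing for an up-down oCRP$(\alpha,0)$ when $\theta=0$); in both, the $\theta=0$ hypothesis is what guarantees that a would-be left-insertion at a block boundary contributes no extra rate, so that the book-keeping of the rate-$\alpha$ clocks matches the single-chain description exactly. Once this is in place the remaining verification is routine rate-matching.
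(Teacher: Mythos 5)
Your proof is correct, but it takes a genuinely different route from the paper's. The paper offers no separate argument for Corollary \ref{skewerj}: it is immediate from the proof of Theorem \ref{thm:skewer}, because the inductive hypothesis there is already formulated for a general composition --- conditionally given $\mathcal{S}^{Y_m}=(n_1,\ldots,n_k)$, the marked excursions above level $Y_m$ are independent with laws $P_{n_1},\ldots,P_{n_k}$ --- and the corollary's initial configuration of $k$ independent copies is exactly this hypothesis at $m=0$, so the same induction runs verbatim. You instead use Theorem \ref{thm:skewer} as a black box and prove a closure property at the level of the chains themselves: the concatenation of independent up-down oCRP$(\alpha,0)$'s is again an up-down oCRP$(\alpha,0)$, via strong lumpability of the concatenation map. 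Your rate-matching is complete, and you correctly isolate the two points where $\theta=0$ is essential: no competing left-insertion at a block boundary (so every position carries insertion rate exactly $\alpha$), and absorption of $\varnothing$ (so the lumped state's ignorance of how many exhausted blocks remain, and where, is harmless). As to what each approach buys: the paper's induction requires no new work and transfers directly to the $\theta>0$ and generalised-$Q$ settings (Theorems \ref{skewertheta}, \ref{MarkovJCCP} and \ref{gen2}), precisely where your lumpability breaks down --- with $\theta>0$ an empty block regenerates and block boundaries carry extra rate, so the concatenation is no longer autonomously Markov; your argument, on the other hand, is more modular and purely chain-level, never touching the L\'evy process, and records a concatenation-closure property of oCRP$(\alpha,0)$ that is of independent interest. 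Two small points to make the write-up airtight: cite explicitly the lumpability (Dynkin) criterion for continuous-time chains, noting that total rates out of every state are finite here; and observe that transitions of distinct types (up-step, down-step, insertion) never produce the same concatenated target, while collisions within a type --- e.g.\ insertions after either position of $(1,1)$ both yielding $(1,1,1)$ --- are handled because the per-position rates, and hence their sums over any fibre, depend only on the concatenated state.
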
 

\begin{rem}
\rm
It is also possible to derive the up-down oCRP$(\alpha,0)$ from a single path of the L\'{e}vy process $(X_t)_{t\ge 0}$. Specifically, consider the L\'{e}vy process up to time 
$T_k$, defined to be the time of the $k$-th down-crossing over $0$. Using the same notation as previously, but with $T$ replaced by $T_{k}$ in the definition of $K^{y}$, then  
conditional on the event that $N_{1}^{0} = n_1, N_{2}^{0} = n_{2}, \ldots, N_{k}^{0} = n_k$, the skewer $\Ss^{y}$ of $\big((X_t)_{0\le t\le T_k},(Z^i)_{0\le i\le J_{T_k}}\big)$ evolves as an 
up-down oCRP$(\alpha,0)$ starting from state $(n_1,n_{2},\ldots,n_k)\in\Cc$. A proof involves noting that the excursions above level $0$ are independent and it is then a 
consequence of Theorem \ref{thm:skewer}. We leave the details of this proof to the reader.
\end{rem}

As was mentioned somewhat briefly in the introduction, it is possible to derive the up-down oCRP$(\alpha,\theta)$ with $\theta > 0$ via a similar 
construction to the one given above. Our construction deals with the table insertions at rate $\theta$ by adding in excursions of $\big((X_t)_{t\geq 0},(Z^i)_{i\ge 0}\big)$ 
corresponding to the arrivals at rate $\theta$ of the new customers opening a new left-most table; this is visualised in Figure \ref{Figure2}.

More formally, suppose that we have the following objects, independent of each other and of the objects in the previous construction:
\begin{itemize}\item the times $(A_{-j})_{j\ge 1}$ of a Poisson process with intensity $\theta > 0$,
\item an IID sequence of $P_1$-distributed marked excursions. 
\end{itemize}
Denoting the excursions by $(X_r^{-j})_{0\le r\le\xi_{-j}}$, $j\ge 1$, we now define a process $(X_t)_{t<0}$ by inserting into a process of unit negative drift the $j$-th excursion
at level $A_{-j}$, $j\ge 1$. To this end, we let $L_0:=0$ and denote by $L_{-j}:=-A_{-j}-\sum_{1\le i\le j}\xi_{-i}$ and $R_{-j}:=L_{-j}+\xi_{-j}$ the left and right 
endpoints of the $j$-th of these excursions, $j\ge 1$, and then set for $t<0$,
$$
X_{t}:=\left\{
\begin{array}{ll}
A_{-j} + X^{-j}_{t-L^{-j}}&\textrm{ if }t \in [L_{-j},R_{-j}]\textrm{ for some }j\ge 1,\\[0.2cm]
|t|-\sum_{1\le i\le j-1}\xi_{-i}&\textrm{ if }t \in (R_{-j},L_{-j+1})\textrm{ for some }j\ge 1.\\
\end{array}
\right.
$$
Then the marks of the marked excursions naturally give rise to an infinite sequence $(Z^{-i})_{i\ge 1}$ marking the jumps of $(X_t)_{t<0}$. Note that we can recover the 
decomposition into marked excursions from $\big((X_t)_{t<0},(Z^{-i})_{i\ge 1}\big)$ by decomposing along the running minimum process $(\inf_{r\le t}X_r)_{t<0}$. Indeed, observe that this 
running minimum process has intervals $[L_{-j},R_{-j}]$ of lengths $\xi_{-j}$ along which it is constant at heights $A_{-j}$, with $L_{-j} < R_{-j}<L_{-j+1}$, $j\ge 1$. We 
denote the distribution of $\big((X_t)_{t<0},(Z^{-i})_{i\ge 1}\big)$ by $P^-$ and note that, by construction, we have a decomposition similar to (S):
\begin{itemize}\item[(S$^-$)] Under $P^-$, the $(A_{-j})_{j\ge 1}$ 
  are the jump times of a PP$(\theta)$, and the marked excursions 
  are independent of $(A_{-j})_{j\ge 1}$ and IID, each with distribution $P_1$.
\end{itemize}
Extending the notation of the bullet points in Section \ref{subsec:skewer}, we denote by $(U_{-i})_{i\ge 1}$ the sequence of jump times of $(X_t)_{t<0}$, by $B_{-i}:=X_{U_{-i}-}$ 
and $D_{-i}:=X_{U_{-i}}$ their pre- and post-jump heights, by $K_-^y:=\#\{i\ge 1\colon B_{-i}\le y<D_{-i}\}$ the number of jumps across level $y$ and for each level with 
$K_-^y\ge 1$, we let $I_{-1}^y:=\inf\{i\ge 1\colon B_{-i}\le y<D_{-i}\}$ and $I_{-l}^y:=\inf\{i\ge I_{-(l-1)}^y+1\colon B_{-i}\le y<D_{-i}\}$, $2\le l\le K_{-}^y$. Then we define
the skewer process of $\big((X_t)_{t<0},(Z^{-i})_{i\ge 1}\big)$ at level $y$ to be\vspace{-0.1cm}
\begin{equation}\label{eqn:negskewer}
\Ss_-^y:=(N^y_{-K_-^y},\ldots,N^y_{-1}),\qquad\mbox{where }N^y_{-l}:=Z^{-I_{-l}^y}(y-B_{-I_{-l}^y}),\ 1\le l\le K_-^y.\vspace{-0.1cm}
\end{equation}
This construction was illustrated in Figure \ref{Figure2}.

\begin{thm} \label{skewertheta} For any $0\le\alpha\le 1$, $\theta>0$ and $(n_1,\ldots,n_k)\in\Cc$, let $\big((X_t)_{t\ge 0},(Z^i)_{i\ge 0}\big)$ be as in Theorem \ref{thm:skewer} and, 
  independently, let $\big((X_t)_{t<0},(Z^{-i})_{i\ge 1}\big)$ be as above. Then, with $(\Ss^y)_{y\ge 0}$ defined as in Corollary \ref{skewerj} and $(\Ss^y_-)_{y\ge 0}$ defined as in 
  \eqref{eqn:negskewer}, the process $(\Ss_-^y\Ss^{y})_{y\geq 0}$ evolves as an up-down oCRP$(\alpha,\theta)$ starting from $(n_1,\ldots,n_k)\in\Cc$.
\end{thm}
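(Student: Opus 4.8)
The plan is to mirror the proof of Theorem~\ref{thm:skewer}, treating the negative-time construction $\big((X_t)_{t<0},(Z^{-i})_{i\ge 1}\big)$ as the mechanism that supplies precisely the rate-$\theta$ insertions of new left-most tables together with their subsequent $(\alpha,0)$-genealogies, while the positive-time part $(\mathcal{S}^y)_{y\ge 0}$ already delivers an up-down oCRP$(\alpha,0)$ started from $(n_1,\ldots,n_k)$ by Corollary~\ref{skewerj}. Since the two sides are independent by hypothesis, it suffices to show that their superposition, read in the order $\mathcal{S}_-^y\mathcal{S}^y$, has the jump-chain/holding-time structure of an up-down oCRP$(\alpha,\theta)$.

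First I would unpack the negative skewer using the decomposition (S$^-$). Each marked excursion planted at level $A_{-j}$ is $P_1$-distributed, so by Theorem~\ref{thm:skewer} (with $n_0=1$) its own skewer, shifted up by $A_{-j}$, is a copy of an up-down oCRP$(\alpha,0)$ started from $(1)$ and ``switched on'' at level $A_{-j}$; it contributes a contiguous block of table sizes to $\mathcal{S}_-^y$ for those $y\ge A_{-j}$ at which the block is non-empty, the block vanishing once the excursion returns to $0$. The reversal built into \eqref{eqn:negskewer}, together with $A_{-1}<A_{-2}<\cdots$ and the interlacing $R_{-j}<L_{-j+1}$, is precisely what realizes the left-insertion rule of the ordered CRP: the block of the excursion planted at the highest level $A_{-j}\le y$ — the most recently opened left-most table and the right-children it has since spawned — is read off left-most, with the blocks of earlier plantings following to its right, each block internally ordered as in Theorem~\ref{thm:skewer}. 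Verifying that both the across-block and within-block orderings come out correctly is a matter of tracking the jump indices $(U_{-i})_{i\ge 1}$ through the construction. Thus, as a level-indexed process, $(\mathcal{S}_-^y)$ is obtained by planting, at the points of a PP$(\theta)$, independent up-down oCRP$(\alpha,0)$'s from $(1)$, each inserted at the left.

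The core is then an induction on the steps of the combined jump chain, with inductive hypothesis that after $m$ steps, conditionally on the current composition $(m_1,\ldots,m_l)=\mathcal{S}_-^{Y_m}\mathcal{S}^{Y_m}$ and on the history, the future is governed by independent data: the $l$ currently-alive marked excursions above level $Y_m$ (each contributing, via its $Z^0$-mark as in (S), a rate-$(m_i-\alpha)$ clock to grow, a rate-$m_i$ clock to shrink or die, and, via its residual PP$(\alpha)$, a rate-$\alpha$ clock to insert a right-child), together with a PP$(\theta)$ of future planting levels restricted to $(Y_m,\infty)$ contributing one additional rate-$\theta$ clock that inserts a fresh left-most table of size $1$. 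Firing the minimum of these $3l+1$ independent exponential clocks and invoking the lack-of-memory property makes the system restart afresh: exactly as in point~(iii) of the proof of Theorem~\ref{thm:skewer}, the post-transition residual PP$(\alpha)$ and PP$(\theta)$ are again Poisson processes, independent of the freshly started excursion, so the inductive hypothesis is restored at step $m+1$. Reading off the clock rates reproduces the four bullet-point transitions of an up-down oCRP$(\alpha,\theta)$ from the introduction, including the rate-$\theta$ transition from $\varnothing$ to $(1)$, which here is simply the next planting of the always-on PP$(\theta)$.

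I expect the main obstacle to be the well-definedness of the level-indexed process in the presence of the infinitely supported planting process, rather than the bookkeeping of the rates. One must check that at each level only finitely many tables are alive and that only finitely many jump-chain steps occur on any bounded level interval, so that $(\mathcal{S}_-^y\mathcal{S}^y)$ is a genuine $\mathcal{C}$-valued jump process. For the positive part this is immediate from Corollary~\ref{skewerj}; for the negative part, the plantings accumulate only at $+\infty$ (so finitely many lie below any $y$), and the expected number of alive negative tables at a fixed level $y$ equals $\theta$ times the integral over $[0,y]$ of the mean number of tables in a $P_1$-skewer, which is finite, whence a.s.\ finiteness. The second delicate point is that the memorylessness used in the induction must be legitimately applied across the unstopped PP$(\theta)$: conditioning on the history up to level $Y_m$ has to leave the future plantings above $Y_m$ a PP$(\theta)$ independent of the alive excursions and of any freshly started one. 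This is the analogue for $\theta$ of the time-reversal argument that produced (S) and (S$^-$), and it is what keeps the rate-$\theta$ clock continuously available — in particular guaranteeing the correct rate-$\theta$ transition out of the empty composition $\varnothing$, from which the positive side alone would be absorbed.
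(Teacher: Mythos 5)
Your proposal is correct and takes essentially the same route as the paper: the paper's proof is exactly the extension of the jump-chain/holding-time induction from Theorem \ref{thm:skewer}, with the inductive hypothesis carrying the $P^-$-distributed marked process on the negative time axis and one additional independent exponential clock of rate $\theta$, coming from the PP$(\theta)$ in (S$^-$), which triggers the transition into $(1,n_1,\ldots,n_k)$, the post-transition planting process being again a PP$(\theta)$ independent of the freshly started $P_1$-distributed excursion. Your extra elaborations (the left-insertion ordering produced by \eqref{eqn:negskewer} and the a.s.\ finiteness checks) are correct filling-in of points the paper leaves implicit.
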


%
%

\begin{proof} We extend the induction on the number of steps in the jump-chain/holding-time description of the skewer process in the proof of Theorem 
\ref{thm:skewer}. Specifically, the induction hypothesis provides the $P^-$-distributed marked process on the negative $t$-axis, and we encounter one more 
independent exponential clock in the induction step,
\begin{enumerate}\item[(iv)] of rate $\theta$, from the PP$(\theta)$ under $P^-$ as in (S$^-$), triggering a transition into state $(1,n_1,\ldots,n_k)$.
\end{enumerate}
As in case (iii), we note if case (iv) triggers the transition, the post-$E_{m+1}$ PP$(\theta)$ is still PP$(\theta)$, and independent of the first, $P_1$-distributed marked excursion as required.
\end{proof}

Finally, we generalise the $Q_\alpha$-Markov chains and their initial distributions. This is based on the observation that much of the proof of Theorems \ref{thm:skewer} and \ref{skewertheta} above 
does not rely on the specific transition rates of $Q_\alpha$-Markov chains nor on letting excursions start from $1$. This establishes the following result.  

\begin{thm}\label{MarkovJCCP} Let $0\le\alpha\le 1$ and $\theta\ge 0$. Suppose that $Q$ is a Q-matrix on $\NN_0$ and $p^-$ and $p$ are distributions on $\NN$, with the 
  following properties:
  \begin{itemize}\item $Q$ is such that $q_{0,0}=0$, i.e.\ $0$ is an absorbing state.
    \item $Q$ is such that $\PP_n(\zeta<\infty)=1$ for all $n\ge 1$; 
      i.e. absorption (at the hitting time $\zeta$ of 0) happens with probability 1 under any $\PP_n$ (the law of a  
      $Q$-Markov chain starting from $n\ge 1$).
      In particular, $(p^-,Q)$ is such that $\sum_{n\ge 1}p_n^-\PP_n(\zeta<\infty)=1$.
    \item $(p,Q)$ is such that $\sum_{n\ge 1}p_n\EE_n(\zeta)\le 1/\alpha$.
  \end{itemize}
  Then replacing $Q_\alpha$ by $Q$ and taking $Z^i(0)\sim p$, $i\ge 1$, in the setting of Theorem \ref{thm:skewer} and in the definition of $P_n$, $n\ge 1$, and replacing $P_1$ by 
  $P_{p^-}:=\sum_{n\ge 1}p_n^-P_n$ in $({\rm S}^{-})$, the concatenated skewer process $(\Ss_-^y\Ss^{y})_{y\geq 0}$, constructed as in Theorem \ref{skewertheta}, is Markovian.
\end{thm}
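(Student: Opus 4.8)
The plan is to re-run the inductive argument in the proofs of Theorems \ref{thm:skewer} and \ref{skewertheta}, tracking the three places where the specific structure of the $Q_\alpha$-Markov chains and the convention that excursions start from $1$ entered, and checking that none of them is essential. Concretely, I would verify that from any state $(n_1,\ldots,n_k)\in\Cc$ the holding time of the skewer process is exponential and the jump-chain transition probabilities depend only on $(n_1,\ldots,n_k)$; this is exactly the jump-chain/holding-time characterisation of a continuous-time Markov chain, and it is what the induction delivers.

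First I would check that the driving marked process is well-defined and that $T<\infty$ almost surely. The jump heights of $(X_t)_{t\ge 0}$ are now absorption times $\zeta$ under $P_p$, with mean $\EE_p(\zeta)=\sum_{n\ge 1}p_n\EE_n(\zeta)\le 1/\alpha$ by hypothesis, so $X$ is a compound Poisson process with unit negative drift whose drift rate $-1+\alpha\EE_p(\zeta)$ is non-positive. Since $X$ has no negative jumps it reaches $0$ almost surely (the subcritical case drifts to $-\infty$; the critical case $\alpha\EE_p(\zeta)=1$ is recurrent and, having no negative jumps, hits $0$ exactly), giving $T<\infty$. The absorption hypothesis $\PP_n(\zeta<\infty)=1$ guarantees every table dies a.s., so each jump is of finite height; together with finiteness of $J_T$ this underlies the finiteness of the number $M$ of jump-chain steps, and this non-explosion point is the one place where the relaxed hypotheses (rather than the explicit $Q_\alpha$) must be handled with some care.

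Next I would re-establish the excursion decompositions (S) and (S$^-$) with $P_1$ replaced by $P_p$ and $P_{p^-}$ respectively. The derivation of (S) used only the strong Markov property of the marked L\'evy process at the excursion endpoints $\tilde L_j,\tilde R_j$, the fact that the jumps of $X$ arrive as a PP$(\alpha)$, the independence of $(X_0,Z^0)$ from the remainder, and the time-reversibility of a Poisson process run for the independent duration $\zeta_\varnothing=X_0$. None of these ingredients refers to the transition rates of $Q_\alpha$ or to excursions starting from $1$: replacing $Q_\alpha$ by $Q$ leaves $Z^0\sim\PP_n$ a generic $Q$-Markov chain absorbed at $0$, and taking $Z^i(0)\sim p$ simply makes the conditionally IID sub-excursions $P_p$-distributed. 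The same reasoning on the negative axis yields (S$^-$) with marked excursions $P_{p^-}$-distributed and insertion times a PP$(\theta)$.

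Finally I would run the induction exactly as before. Assuming that, given the first $m\wedge M$ steps and $\mathcal{S}^{Y_m}=(n_1,\ldots,n_k)$, the marked excursions above level $Y_m$ are independent with laws $P_{n_i}$ and the negative-axis process is $P^-$-distributed, the competing exponential clocks in the induction step are: for each $i$, the transition clocks of the $Q$-Markov chain $Z^0$ under $P_{n_i}$, now firing at the general rates $q_{n_i,m}$ and turning entry $i$ into $m$ (or deleting it when $m=0$, with no child yet born since the death clock was minimal); for each $i$, the PP$(\alpha)$ clock of rate $\alpha$ inserting a fresh $P_p$-excursion directly to the right of entry $i$, contributing a new entry of random size $\sim p$; and, when $\theta>0$, the PP$(\theta)$ clock of rate $\theta$ inserting a fresh $P_{p^-}$-excursion as a new left-most entry. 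Taking $E_{m+1}$ to be the minimum of these clocks and invoking the lack-of-memory property together with (S)/(S$^-$) exactly as in cases (i)--(iv), the marked process restarts afresh above $Y_{m+1}$ with independent excursions matching the new composition, so the induction closes. Since all the firing rates depend only on the current state $(n_1,\ldots,n_k)$, with $q_{n,\cdot}$, $\alpha p_\cdot$ and $\theta p^-_\cdot$ as the transition rates, the skewer process is Markovian. The main obstacle is not the induction itself but confirming the robustness of (S) under a general $Q$ and, relatedly, the well-definedness and non-explosion under the weakened drift and absorption hypotheses; the conclusion is only \emph{Markovian}, and not a specific up-down oCRP, precisely because these general rates no longer match the oCRP$(\alpha,\theta)$ values.
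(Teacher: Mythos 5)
Your proposal is correct and takes essentially the same route as the paper, which establishes this theorem precisely by observing that the inductive proofs of Theorems \ref{thm:skewer} and \ref{skewertheta} never use the specific $Q_\alpha$-rates or the unit initial table sizes, and that the sub-criticality condition $\sum_{n\ge 1}p_n\EE_n(\zeta)\le 1/\alpha$ is exactly what keeps the excursions of $(X_t)_{t\ge 0}$ almost surely finite so that the strong Markov decompositions (S) and (S$^-$) survive. Your additional care over $T<\infty$, non-explosion, and the replacement of $P_1$ by $P_p$ and $P_{p^-}$ corresponds to the paper's own remarks surrounding the theorem.
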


Note that for $\mu := \sum_{n= 1}^{\infty}p_n \EE_n(\zeta)$, the assumption that $\mu\leq 1/\alpha$ is a condition of (sub)-criticality and corresponds to the condition 
required for the excursions of the L\'{e}vy process to be almost surely finite, which in particular is necessary to avoid $(X_t)_{t\ge 0}$ drifting to $\infty$; we previously 
had $\mu=1/\alpha$ by Proposition \ref{taudist}. Specifically, we note that new tables inserted on the right arrive at rate $\alpha$ and have an average lifespan of $\mu$
independently of arrival times; we need $\mu\alpha$ to be less than or equal to the rate $1$ of downward drift. If this does not hold, then it is not necessarily possible to 
apply the Strong Markov property at time $\tilde{R}_1$ and the proof of Theorem \ref{skewertheta} breaks down. Indeed, informally, when $\tilde{R}_1=\infty$, the rate-$\alpha$
insertion of a table to the right is absent in the skewer process, but whether $\tilde{R}_1=\infty$ or not depends on the future of the skewer
process.

Note that, on the other hand, no such restriction is needed for $p^-$ since $p^-$ only affects the first jump in an excursion of the L\'evy process, while the remainder of the
excursion has jumps according to the absorption time under $\sum_{n\ge 1}p_n\PP_n$. 

We can describe precisely the class of skewer processes in such constructions, of which the up-down oCRP$(\alpha,\theta)$ is a special case. 
We continue to interpret a composition $(n_1,\ldots,n_k)\in\Cc$ as $k$ tables in a row, with $n_1,\ldots,n_k$ customers, respectively. 

\begin{defn}[Generalised up-down oCRP$(\alpha,\theta)$]\label{GCRP} Let $0\le\alpha\le 1$ and $\theta \geq 0$, and suppose that $Q$ is a Q-matrix on 
  $\NN_0$ with 0 as an absorbing state, and that $p^-$ and $p$ are distributions on $\NN$. Then the \emph{generalised up-down oCRP$(\alpha,\theta)$ with Q-matrix $Q$, table
  distribution $p$ and left-most table distribution $p^-$} is a continuous-time Markov process where from state $(n_1,\ldots, n_k)\in\Cc$ there are independent exponential clocks
\begin{itemize}
\item of rate $q_{n_i,l}$, for the arrival at table $i$ of $l-n_i$ new customers if $l>n_i$ or the departure from table $i$ of $n_i-l$ existing customers if $l<n_i$, and leading to a transition into state $(n_1,\ldots, n_{i-1},l, n_{i+1},\ldots, n_k)$ if $l\geq 1$, $l\neq n_i$, or into state $(n_1,\ldots, n_{i-1},n_{i+1},\ldots, n_k)$ if $l=0$, for each $l\neq n_i$, $1\leq i \leq k$,
\item of rate $\alpha p_l$ for the arrival of $l$ customers at a new table to the right of table $i$, and leading to state $(n_1,\ldots, n_{i},l, n_{i+1},\ldots, n_k)$, for each $l\ge 1$, $1\leq i \leq k$,
\item of rate $\theta p_l^-$ for the arrival of $l$ customers at a new table to the left of table 1, and leading to state $(l,n_1,\ldots,n_k)$, for each $l\ge 1$.
\end{itemize}

\end{defn}

The second and third bullet points can be rephrased as saying that at rate $\alpha$, respectively $\theta$, a group of random $p$-distributed size, respectively $p^-$-distributed size, arrives to open a new table to the right of each table $i$, $1\le i\le k$, respectively, to the left of table 1. 
The first bullet point allows to model the party size at each table by a pretty general $\mathbb{N}_0$-valued continuous-time Markov chain, with people of various group sizes coming and going until the party ends with the last people leaving the table.

As the reader may now expect, this process arises as the distribution of a skewer process in Theorem \ref{MarkovJCCP}: 

\begin{thm}\label{gen2}
  The skewer process in Theorem \ref{MarkovJCCP} is a generalised up-down oCRP$(\alpha,\theta)$ with Q-matrix $Q$, table distribution $p$ and left-most table distribution $p^-$.
\end{thm}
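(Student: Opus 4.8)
The plan is to re-run the induction on the steps of the jump chain from the proofs of Theorems \ref{thm:skewer} and \ref{skewertheta}, now bookkeeping the exact transition rates and matching them against the three bullet points of Definition \ref{GCRP}. Theorem \ref{MarkovJCCP} already furnishes the Markov property and, with it, the analogues of the decompositions (S) and (S$^-$) for the general data $(Q,p,p^-)$: under $P_n$ the left-most path is $Z^0\sim\PP_n$, given $Z^0$ the right-insertions form a PP($\alpha$) on $[0,\zeta_\varnothing]$ whose marked excursions are conditionally IID with law $P_p:=\sum_{l\ge 1}p_l P_l$, and under $P^-$ the left-insertions form a PP($\theta$) with conditionally IID $P_{p^-}$-distributed marked excursions. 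Given this, the only genuinely new work is to read off the three families of rates; the general starting state $(n_1,\ldots,n_k)$ is incorporated exactly as in Corollary \ref{skewerj} and Theorem \ref{skewertheta}.

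I would carry the same induction hypothesis as before: conditionally on the first $m\wedge M$ jump-chain states and holding times and on $\Ss^{Y_m}=(n_1,\ldots,n_k)$, the marked excursions above $Y_m$ on the right are independent with laws $P_{n_i}$, $1\le i\le k$, and the configuration on the negative axis is $P^-$-distributed. Assuming this, for each table $i$ I would list the independent exponential clocks governing the next step. First, the left-most path $Z^0$ of the $i$-th excursion is a $Q$-Markov chain started at $n_i$, so it carries, for each $l\neq n_i$, an independent clock of rate $q_{n_i,l}$ whose firing changes the $i$-th entry from $n_i$ to $l$ instantaneously (with removal of the entry when $l=0$); this is exactly the first bullet of Definition \ref{GCRP}. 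Second, the right-insertion PP($\alpha$) attached to table $i$ carries IID $p$-distributed initial sizes, so by the Poisson colouring/thinning theorem it splits into independent Poisson processes, the one producing a size-$l$ insertion to the right of table $i$ having rate $\alpha p_l$; this is the second bullet. Third, the left-insertion PP($\theta$) of (S$^-$), coloured by its IID $p^-$-distributed initial sizes, splits into independent rate-$\theta p_l^-$ processes inserting a size-$l$ table at the far left, giving the third bullet.

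Taking $E_{m+1}$ to be the minimum of all these clocks then determines the $(m+1)$-st transition, and the lack-of-memory property together with the strong Markov property at the corresponding level makes the system restart with fresh, independent marked excursions of the required laws above $Y_{m+1}$; in particular the post-firing right- and left-insertion processes are again PP($\alpha$) and PP($\theta$) independent of the freshly started $P_p$- and $P_{p^-}$-excursions, so the induction closes. Comparing the rates collected at each step with Definition \ref{GCRP} completes the identification.

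The point requiring most care is that the decomposition (S) persists in this generality, since it is precisely there that the subcriticality hypothesis $\mu\alpha\le 1$ of Theorem \ref{MarkovJCCP} guarantees a.s.\ finiteness of excursions and hence the applicability of the strong Markov property at the excursion endpoints $\tilde R_j$; this is inherited rather than re-proved. The only genuinely new computation is the Poisson thinning resolving the insertion sizes into the rates $\alpha p_l$ and $\theta p_l^-$, which is routine. One should also note in passing that a single jump of the general chain from $n_i$ to $l$ is registered by the skewer as one step $(\ldots,n_i,\ldots)\mapsto(\ldots,l,\ldots)$ rather than a cascade of unit steps, so that jumps of size greater than one are accommodated without modification.
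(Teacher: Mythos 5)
Your proposal is correct and takes essentially the approach the paper intends: the paper omits the proof of this theorem entirely, stating only that it ``follows by using essentially the same argument as for Theorems \ref{thm:skewer} and \ref{skewertheta}'', and your induction is exactly that argument carried out with the general data $(Q,p,p^-)$. The details you add beyond the paper --- the Poisson colouring step resolving the insertion clocks into independent rates $\alpha p_l$ and $\theta p_l^-$, and the remark that a jump of the $Q$-chain from $n_i$ to $l$ registers as a single skewer transition --- are precisely the points the paper leaves to the reader, and you have them right.
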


The proof follows by using essentially the same argument as for Theorems \ref{thm:skewer} and \ref{skewertheta}, and is therefore omitted.

\section{Scaling limits}\label{sec:limits}

In this section, we develop the scaling limit results stated as Theorems \ref{thm:lim1}, \ref{thm:lim2} and \ref{thm:lim3} in the introduction.  The first two results relate the chains governing table sizes and total numbers of customers in the up-down oCRP$(\alpha,\theta)$ to squared Bessel processes. The third result relates the L\'{e}vy process controlling the 
JCCP to a $(1+\alpha)$-stable L\'{e}vy process. Before proving these results in Section \ref{sec:proofs}, we cover preliminary material in Section \ref{sec:prel}. We discuss
some further consequences in Section \ref{sec:conseq}.

\subsection{Relevant material from the literature}\label{sec:prel}


\subsubsection{The Skorokhod topology and hitting times}\label{sec:Skor}
To discuss convergence of stochastic processes, we work with the Skorokhod topology on the space of c\`{a}dl\`{a}g paths. Specifically, 
let $(E,\rho)$ be a metric space and let $\DD([0,\infty),E)$ denote the space of c\`{a}dl\`{a}g functions from $[0,\infty)$ to $E$. Also denote by $\DD([0,t],E)$ the space of c\`adl\`ag functions from $[0,t]$ to $E$.

\begin{defn}[Skorokhod topology] The \emph{Skorokhod topology on $\DD([0,t],E)$} is the topology induced by the metric \vspace{-0.2cm}
$$d_{t}(f,g):= \inf_{\lambda \in \LL_t} \Big{(} \|\lambda\|^{\circ}_t \vee \|f - g \circ \lambda\|_t \Big{)},\vspace{-0.2cm} $$
where $\LL_t$ is the set of all strictly increasing bijections of $[0,t]$ into itself, $\|\cdot \|^{\circ}_t$ is the function on $\LL_t$ with
$\|\lambda\|_{t}^{\circ} := \sup_{r,s \colon 0\leq r <s \leq t} \big{|}\ln\big{(} \frac{\lambda(s)-\lambda(r)}{s-r}\big{)}\big{|}$, and $\|\cdot\|_{t}$ is the supremum norm on $\DD([0,t],E)$.

  The \emph{Skorokhod topology on $\DD(E,[0,\infty))$} is the topology induced by the metric \vspace{-0.2cm}
$$d(f,g):= \sum_{k=1} ^{\infty} \frac{1 \wedge d_{k}(f^{(k)},g^{(k)})}{2^k},\quad\textrm{where } f^{(k)}(t) := f(t) \mathbf{1}_{\{t < k-1\}} + (k-t) f(t) \mathbf{1}_{\{ k-1 \leq t \leq k\}}.
$$
\end{defn}
The following properties of the Skorokhod topology are used later.
\begin{lem} \label{Skoroprop}
The following results hold.
\begin{itemize} 
\item[(a)] \cite[Theorem 16.2]{billingsley99} $d(f_n,f) \rightarrow 0$ in $\DD([0,\infty),E)$ if and only if $d_{t}(f_n,f) \rightarrow 0$ for every continuity point $t$ of $f$. 
\item[(b)] \cite[Lemma 16.1, Theorem 16.2]{billingsley99} If $f_n \rightarrow f$ in $\DD([0,\infty),E)$ and $f$ is continuous, then $f_n \stackrel{\textrm{unif}}{\longrightarrow} f$ on $[0,t]$ for any $t>0$. 
\end{itemize}
\end{lem}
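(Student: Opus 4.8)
Both assertions are essentially the content of \cite[Lemma 16.1, Theorem 16.2]{billingsley99}, so the plan is to derive them from the explicit metric $d$ above by matching our tapering construction $g\mapsto g^{(k)}$ to Billingsley's restriction operators, the only delicate point being the continuity-point caveat in (a). For part (a) I would argue from the series $d(f_n,f)=\sum_{k\ge 1}2^{-k}\big(1\wedge d_k(f_n^{(k)},f^{(k)})\big)$. In the forward direction, $d(f_n,f)\to 0$ forces every summand, hence every $d_k(f_n^{(k)},f^{(k)})$, to $0$; given a continuity point $t$ of $f$, pick $k$ with $k-1>t$, so that $f^{(k)},f_n^{(k)}$ coincide with $f,f_n$ on $[0,k-1]\supseteq[0,t]$, and then obtain $d_t(f_n,f)\to 0$ by restricting to $[0,t]$ the near-identity time-changes that realise $d_k$-convergence. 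Conversely, the non-continuity points of $f$ being at most countable, for each $k$ I can choose a continuity point $t_k>k$; the hypothesis $d_{t_k}(f_n,f)\to 0$ then controls $d_k(f_n^{(k)},f^{(k)})$ because the tapering map $g\mapsto g^{(k)}$ is continuous from $(\DD([0,t_k],E),d_{t_k})$ into $(\DD([0,k],E),d_k)$, merely multiplying paths by the continuous factor $(k-\cdot)$ near the endpoint, and summing the dominated series gives $d(f_n,f)\to 0$.

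For part (b), the key estimate is that $\|\lambda\|_t^\circ\to 0$ forces $\lambda$ uniformly close to the identity: from $e^{-\|\lambda\|_t^\circ}\le(\lambda(s)-\lambda(r))/(s-r)\le e^{\|\lambda\|_t^\circ}$ with $r=0$ and $\lambda(0)=0$ one gets $|\lambda(s)-s|\le t\big(e^{\|\lambda\|_t^\circ}-1\big)$ for all $s\in[0,t]$. Since $f$ is continuous, every $t$ is a continuity point, so part (a) supplies time-changes $\lambda_n$ with $\|\lambda_n\|_t^\circ\to 0$ and $\|f_n-f\circ\lambda_n\|_t\to 0$; the estimate yields $\|\lambda_n-\mathrm{id}\|_t\to 0$, uniform continuity of $f$ on the compact $[0,t]$ turns this into $\|f\circ\lambda_n-f\|_t\to 0$, and the triangle inequality then gives $\|f_n-f\|_t\to 0$, i.e.\ uniform convergence on $[0,t]$.

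The step I expect to be the main obstacle is the endpoint bookkeeping in the forward direction of (a): a jump of $f_n$ landing at a non-continuity point $t$ can spoil $d_t(f_n,f)\to 0$ even when $d(f_n,f)\to 0$, so the restriction of the global near-identity time-changes to $[0,t]$ must exploit both $k-1>t$ and the continuity of $f$ at $t$ to avoid an uncontrolled sup-norm error at the right endpoint. The remaining ingredients---the series manipulations, continuity of the taper, and the application of uniform continuity in (b)---are routine.
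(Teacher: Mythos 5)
The paper does not actually prove this lemma: both parts are quoted directly from Billingsley, so there is no ``paper proof'' to match, and the efficient route here is simply the citation. Judged as a self-contained reconstruction of Billingsley's argument, your part (b) is fine: the estimate $|\lambda(s)-s|\le t\,(e^{\|\lambda\|_t^\circ}-1)$ is correct, and together with part (a) (every $t$ is a continuity point of a continuous $f$) and uniform continuity of $f$ on the compact $[0,t]$ it yields uniform convergence.

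Part (a), however, has a genuine gap, located exactly at the point you yourself flag as ``the main obstacle'' and then leave undone. In the forward direction, the near-identity time changes $\lambda_n\in\LL_k$ witnessing $d_k(f_n^{(k)},f^{(k)})\to 0$ cannot simply be ``restricted to $[0,t]$'': the restriction of $\lambda_n$ is a bijection of $[0,t]$ onto $[0,\lambda_n(t)]$, which is not an element of $\LL_t$. Rescaling by $t/\lambda_n(t)$ repairs the range but ruins the sup-norm bound, since $f$ is not uniformly continuous and $f(\lambda_n(s))$, $f(t\lambda_n(s)/\lambda_n(t))$ may straddle a jump of $f$ in the interior of $[0,t]$. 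The correct repair is a splice: follow $\lambda_n$ up to a time slightly before $t$ and interpolate to $t$ on the remaining short interval, the resulting error being bounded by the oscillation of $f$ near $t$ --- this is precisely where continuity of $f$ at $t$ is consumed (the statement is false at discontinuity points: take $f=\mathbf{1}_{[1,\infty)}$, $f_n=\mathbf{1}_{[1+1/n,\infty)}$, $t=1$, where any $\lambda\in\LL_1$ must fix $1$, so $d_1(f_n,f)\geq 1$), and this construction-plus-estimate is the substantive content of Billingsley's theorem, not a routine restriction. The same surgery is hidden in your converse direction: continuity of the taper map $g\mapsto g^{(k)}$ from $\DD([0,t_k],E)$ to $\DD([0,k],E)$ is not a matter of ``merely multiplying by a continuous factor,'' because a time change witnessing $d_{t_k}(f_n,f)\to 0$ does not fix $k$; one must again build a time change of $[0,k]$ that does fix $k$, and what makes the endpoint error vanish is that the taper is $0$ at $k$ together with boundedness of c\`adl\`ag paths on compacts --- true, but it needs the same kind of argument. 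Until these two splicing constructions are written out, what you have is a plan for Billingsley's proof rather than a proof.
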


%

To show that convergence in distribution of hitting times holds, we establish the following lemma. 

\begin{lem} \label{taucty}
Consider the pair of functions $\tau,\tau^{<}\colon\DD([0,\infty),\RR)\rightarrow [0,\infty]$ given by 
$$\tau(f) := \inf\{t \geq 0 \colon f(t) \leq 0\}\qquad\mbox{and}\qquad \tau^{<}(f) := \inf\{t \geq 0 : f(t) < 0\},$$
and the function spaces $C([0,\infty),\RR) := \{f \in \DD([0,\infty),\RR)\colon f \textrm{ is continuous on }[0,\infty) \}$ and 
$\DD^{*}([0,\infty),\RR):= \big\{f \in \DD([0,\infty),\RR)\colon \forall t \geq \tau(f), f(t) \leq 0, \: \tau^{<}(f)= \tau(f)<\infty\big\}$.
If $f_n \rightarrow f$ in $\DD([0,\infty),\RR)$ and $f\in C([0,\infty),\RR)\cap \DD^{*}([0,\infty),\RR)$, then $\tau(f_n)\rightarrow \tau(f)$.

\end{lem}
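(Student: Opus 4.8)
The plan is to reduce the statement to a uniform-convergence argument and then bound $\tau(f_n)$ from above and below separately. Since $f$ is continuous and $f_n\to f$ in $\DD([0,\infty),\RR)$, Lemma \ref{Skoroprop}(b) gives $f_n\to f$ uniformly on every compact interval $[0,t]$; this is the only place I would invoke the Skorokhod convergence, as all remaining estimates are deterministic facts about uniformly convergent real-valued paths. Writing $\tau:=\tau(f)$, which is finite by the hypothesis $f\in\DD^{*}([0,\infty),\RR)$, I would first record two consequences of continuity: $f(t)>0$ for every $t<\tau$ (by definition of $\tau$), and $f(\tau)=0$ (since $\{t:f(t)\leq 0\}$ is closed and nonempty with infimum $\tau$, while the left limit is $\geq 0$).

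For the lower bound $\liminf_n\tau(f_n)\geq\tau$, I would fix $\varepsilon\in(0,\tau)$, the case $\tau=0$ being trivial. On the compact interval $[0,\tau-\varepsilon]$ the continuous, strictly positive $f$ attains a positive minimum $m>0$, and uniform convergence then forces $f_n(t)>m/2>0$ for all $t\in[0,\tau-\varepsilon]$ once $n$ is large, so that $\tau(f_n)>\tau-\varepsilon$. Letting $\varepsilon\downarrow 0$ yields the claim; note this direction uses only continuity and the definition of $\tau$.

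For the upper bound $\limsup_n\tau(f_n)\leq\tau$, this is where the defining condition $\tau^{<}(f)=\tau(f)$ of $\DD^{*}$ does the work. Because $f>0$ on $[0,\tau)$ and $f(\tau)=0$, the set $\{t:f(t)<0\}$ is contained in $(\tau,\infty)$, and the equality $\tau^{<}(f)=\tau$ says its infimum is exactly $\tau$. Hence for any $\varepsilon>0$ I can select $s\in(\tau,\tau+\varepsilon)$ with $f(s)=-c<0$; uniform convergence on $[0,\tau+\varepsilon]$ then forces $f_n(s)<-c/2<0$ for all large $n$, whence $\tau(f_n)\leq s<\tau+\varepsilon$. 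Letting $\varepsilon\downarrow 0$ and combining with the lower bound gives $\tau(f_n)\to\tau$.

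The main obstacle, and the reason the lemma is not vacuous, will be precisely this upper bound: hitting times are in general discontinuous on Skorokhod space because a path may graze the level $0$ from above without crossing it, so an approximating sequence might hit $0$ much later or never. The condition $\tau^{<}(f)=\tau(f)$ rules out such tangencies by guaranteeing a genuine strict down-crossing arbitrarily soon after $\tau$, which is exactly what converts the trivial pointwise limit $f_n(s)\to f(s)<0$ into control of $\tau(f_n)$. I would also remark that the remaining requirement $f(t)\leq 0$ for all $t\geq\tau(f)$ in the definition of $\DD^{*}$ is not actually needed for this continuity statement; it is kept because the intended limit paths (absorbed squared Bessel processes) satisfy it and it is convenient in the later application.
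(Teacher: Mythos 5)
Your proof is correct and follows essentially the same route as the paper's: uniform convergence on compacts via Lemma \ref{Skoroprop}(b), a lower bound from the strictly positive minimum of $f$ on $[0,\tau(f)-\varepsilon]$, and an upper bound from the condition $\tau^{<}(f)=\tau(f)$ supplying a point $s\in(\tau(f),\tau(f)+\varepsilon)$ with $f(s)<0$. Your closing observation that the requirement $f(t)\leq 0$ for $t\geq\tau(f)$ in the definition of $\DD^{*}([0,\infty),\RR)$ is not needed for this particular lemma is also accurate.
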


\begin{proof} 

Suppose that $f_n \in \DD([0,\infty),\RR)$ converges under the Skorokhod topology to some function $f \in \DD^*([0,\infty),\RR) \cap C([0,\infty),\RR)$. For any $t \geq 0$, 
$d_{t}(f_n,f)\rightarrow 0$ by Lemma \ref{Skoroprop} (a), and as $f\in C([0,\infty),\RR)$, $f_n \stackrel{\textrm{unif}}{\longrightarrow} f$ on $[0,t]$ by 
Lemma \ref{Skoroprop} (b).

Let $\varepsilon > 0$. Then it follows from uniform convergence on $[0,\tau(f) - \varepsilon]$, the definition of $\tau$ and the fact that  
$\inf_{0 \leq t \leq \tau(f)-\varepsilon} f(t) >0$ that there exists $N_1 \in \NN$ such that for any $ n > N_1, f_n > 0 $ on $[0,\tau(f) - \varepsilon]$. This yields 
$\tau(f_n) > \tau(f) - \varepsilon$ for every $n > N_1$.

To show that $\tau(f_n) < \tau(f)+\varepsilon$ for sufficiently large $n$, recall that as $f\!\in\!\DD^*([0,\infty),\RR)$, $\tau(f)= \tau^{<}(f)$. By assumption, there is some 
$\delta \in (0,\varepsilon)$ with $f(\tau^{<}(f)+\delta) < 0$. Now  $f_n \stackrel{\textrm{unif}}{\longrightarrow} f$ on $[0,\tau^{<}(f)+\delta]$, and so there exists $N_2 \in\NN$
such that for any $n >N_2$, $f_{n}(\tau(f)+\delta)<0$. It follows that $\tau(f_n) \leq \tau(f)+\delta<\tau(f)+\varepsilon$.
Setting $N:= \max(N_1,N_2)$, we have shown that for all $n > N$, $|\tau(f)-\tau(f_n)| < \varepsilon$ as required.
\end{proof}

We will also use the following well-known convergence criteria for L\'evy processes. 

\begin{lem} e.g. \cite[Corollary VII.3.6]{jacodshiryaev02}\label{Jacodcadlagconvergence}
Suppose that $X^n$ and $\overline{X}$ are c\`{a}dl\`{a}g processes with stationary independent increments. Then $X^n \stackrel{D}{\rightarrow}\overline{X}$ on 
$\mathbb{D}([0,\infty),\RR)$ if and only if $X^{n}_{1}\stackrel{D}{\rightarrow}\overline{X}_{\!1} $. Moreover, whenever this holds and $F_{X^{n}}$ and $F_{\overline{X}}$ are 
the L\'{e}vy measures of $X^n$ and $\overline{X}$ respectively, then for every $g\in C_{2}([0,\infty),\RR)$, $F_{X^{n}}(g)\rightarrow F_{\overline{X}}(g)$, where $$C_{2}([0,\infty),\RR):=\big\{g\!\in\! C([0,\infty),\RR)\!\colon g \textrm{ is bounded and there is }\varepsilon\!>\!0 \textrm{ with } g\!=\!0 \textrm{ on } [0,\varepsilon)\big\}\!.$$
\end{lem}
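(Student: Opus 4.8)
The plan is to deduce the functional convergence from convergence of the L\'evy--Khintchine characteristic triplets, which for processes with stationary independent increments is controlled entirely by the one-dimensional marginal at time $1$ via infinite divisibility; this is exactly Corollary VII.3.6 of \cite{jacodshiryaev02}, and I indicate the structure of a self-contained argument. The ``only if'' direction is immediate: since $\overline{X}$ is a L\'evy process it has no fixed times of discontinuity, so $\PP(\overline{X}_1=\overline{X}_{1-})=1$, and the evaluation map $f\mapsto f(1)$ is continuous at every $f\in\DD([0,\infty),\RR)$ that is continuous at $1$. The continuous mapping theorem applied to $X^n\stackrel{D}{\rightarrow}\overline{X}$ therefore yields $X^n_1\stackrel{D}{\rightarrow}\overline{X}_1$ at once.

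For the ``if'' direction I would write $\EE[e^{iuX^n_1}]=\exp(\psi_n(u))$, where $\psi_n$ is the L\'evy--Khintchine exponent associated with a triplet $(b_n,c_n,F_n)$, and likewise $\psi$ with triplet $(b,c,F)$ for $\overline{X}$. Convergence $X^n_1\stackrel{D}{\rightarrow}\overline{X}_1$ is equivalent to pointwise convergence $\psi_n\to\psi$, and by the classical continuity theorem for infinitely divisible laws this is in turn equivalent to convergence of the triplets: $b_n\to b$, the truncated variances $c_n+\int_{\{|x|\le\varepsilon\}}x^2F_n(dx)$ converge for continuity points $\varepsilon$ of the limit, and $(x^2\wedge1)F_n(dx)$ converges vaguely to $(x^2\wedge1)F(dx)$. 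Since the exponent of $X^n_t$ is exactly $t\psi_n$, this gives $X^n_t\stackrel{D}{\rightarrow}\overline{X}_t$ for every fixed $t$, and stationarity together with independence of increments upgrades this to convergence of all finite-dimensional distributions.

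What remains, and what I expect to be the main obstacle, is tightness of $(X^n)_{n\ge1}$ in $\DD([0,\infty),\RR)$; this is precisely where the Jacod--Shiryaev machinery does its work. I would verify an Aldous-type c\`adl\`ag modulus-of-continuity criterion, exploiting that the deterministic semimartingale characteristics of a L\'evy process are linear in $t$: the triplet convergence just established furnishes uniform-in-$n$ control of the Gaussian part and of the mass of small jumps accumulated over short time windows, from which $\lim_{\delta\downarrow0}\sup_{n}\PP(w'(X^n,\delta,T)\ge\eta)=0$ follows for each $\eta,T>0$, where $w'$ is the c\`adl\`ag modulus. Combined with the finite-dimensional convergence, Prohorov's theorem then delivers $X^n\stackrel{D}{\rightarrow}\overline{X}$ on $\DD([0,\infty),\RR)$. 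Finally, the asserted convergence of L\'evy measures is a by-product of the triplet convergence already in hand: for $g\in C_2([0,\infty),\RR)$ there is $\varepsilon>0$ with $g\equiv0$ on $[0,\varepsilon)$, so $F_{X^n}(g)$ integrates only over $\{x\ge\varepsilon\}$, where the vague convergence of $F_n$ to $F$ (taking $\varepsilon$ a continuity point of $F$) together with the boundedness and continuity of $g$ gives $F_{X^n}(g)\to F_{\overline{X}}(g)$.
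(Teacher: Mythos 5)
Your proposal addresses a statement that the paper does not actually prove: Lemma \ref{Jacodcadlagconvergence} is quoted as a known result, the ``proof'' in the paper being nothing more than the citation to \cite[Corollary VII.3.6]{jacodshiryaev02}, with the L\'evy-measure assertion coming from the convergence-of-characteristics theory in the same chapter of that book. What you sketch is, in structure, exactly the argument behind that cited result, so your route and the paper's source coincide: the ``only if'' direction by continuous mapping at the fixed time $1$ (almost surely a continuity point of the L\'evy limit) is correct, and the ``if'' direction via distinguished logarithms, the classical equivalence between convergence of infinitely divisible laws and convergence of their triplets, linearity $t\psi_n$ of the exponents to pass to finite-dimensional distributions, and then tightness, is the standard proof.

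Two places where your sketch is loose, though neither invalidates the approach. First, tightness is the technical heart and you only gesture at it; for processes with stationary independent increments there is a cleaner route than estimating the modulus $w'$ by hand, namely Aldous's criterion: for bounded stopping times $\tau_n$ and constants $\delta_n\downarrow0$ one has $X^n_{\tau_n+\delta_n}-X^n_{\tau_n}\stackrel{D}{=}X^n_{\delta_n}$, whose characteristic function $\exp\big(\delta_n\psi_n(u)\big)\rightarrow1$ because the pointwise convergence $\psi_n(u)\rightarrow\psi(u)$ makes $\psi_n(u)$ bounded in $n$ for each fixed $u$; together with tightness of each one-dimensional marginal (immediate from their convergence in distribution) this yields tightness in $\DD([0,\infty),\RR)$. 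Second, in the last step, vague convergence of $(x^2\wedge1)F_{X^n}(dx)$ on $(0,\infty)$ plus boundedness of $g$ is not by itself sufficient: vague convergence tests only against compactly supported functions, so mass could escape to $+\infty$. The classical triplet-convergence theorem in fact delivers weak convergence of the \emph{finite} measures $(x^2\wedge1)F_{X^n}(dx)+c_n\delta_0(dx)$, where $c_n$ is the Gaussian variance; applying this weak convergence to $h(x):=g(x)/(x^2\wedge1)$ for $x>0$, $h(0):=0$ --- which is bounded and continuous precisely because $g$ vanishes on $[0,\varepsilon)$ --- gives $F_{X^n}(g)\rightarrow F_{\overline{X}}(g)$ with no escape-of-mass issue. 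With these two repairs your argument is a correct, self-contained proof of the quoted lemma.
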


\subsubsection{Squared Bessel processes}\label{sec:BESQ}

The scaling limit results of Theorems \ref{thm:lim1} and \ref{thm:lim2} involve $[0,\infty)$-valued squared Bessel processes of dimension $\delta\in\mathbb{R}$, which are absorbed at $0$ if $\delta\le 0$. As in the introduction, we denote these by ${\tt BESQ}_a(\delta)$. 
We will establish the convergence of absorption times by considering the 
natural extension of squared Bessel processes with negative dimension to the negative half-line.  

\begin{defn}[Extended squared Bessel process]\cite{jaeschkeyor03,revuzyor99} \label{defnBESQ} Let $\delta \in \RR$, $a\in\RR$, and suppose that $(W_s)_{s\geq 0}$ is a standard 
  Brownian motion in $\RR$. The unique strong solution of the SDE \eqref{SDE} is called an \emph{extended squared Bessel process with dimension $\delta$}. Its distribution on 
  $C([0,\infty),\RR)$ when starting from $a$ is denoted by ${\tt BESQ}_a^*(\delta)$.
\end{defn}


The drift parameter $\delta$ in \eqref{SDE} is referred to as the \em dimension \em since it can be shown for any integer $n \in\NN$, that if $(B_t)_{t\geq 0}$ is an $n$-dimensional Brownian motion, the evolution of the squared norm $\|B_t\|^2$, $t\ge 0$, is a ${\tt BESQ}_0(n)$-process.


We state some properties of squared Bessel processes needed later. 

\begin{prop}[Pathwise properties of squared Bessel processes] \label{pathBESQ}

Let $\delta \in \RR,a \geq 0$, and suppose that $Y^*$ is a ${\tt BESQ}_{a}^*(\delta)$ process.\vspace{-0.1cm}

\begin{itemize}
\item[(a)] \cite[p.330]{jaeschkeyor03} The process $-Y^*$ is a ${\tt BESQ}_{-a}^*(-\delta)$ process.\vspace{-0.1cm}
\item[(b)] \cite[Corollary 1]{jaeschkeyor03} If $\delta \geq 2$ and $a>0$, then $Y^*$ does not hit $0$ almost surely, and if $\delta < 2$ then $Y^*$ hits $0$ almost surely. \vspace{-0.1cm}
\item[(c)] \cite[Proposition XI.1.5]{revuzyor99} If $\delta \geq 0$ then $Y_t^* \geq 0$ for all $t\ge 0$, with $0$ an absorbing state if and only if $ \delta = 0$.
  If $\delta\in(0,2)$, then $0$ is instantaneously reflecting. 

\end{itemize}

\end{prop}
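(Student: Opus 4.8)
The plan is to handle the three parts in turn, obtaining (a) and (b) directly from the defining SDE \eqref{SDE} and one-dimensional diffusion theory, and reducing (c) to a comparison argument together with the boundary classification of \cite{revuzyor99}. For part (a) I would argue at the level of the SDE: if $Y^*$ solves \eqref{SDE} with parameters $(\delta,a)$ driven by $W$, then $\widetilde{Y}:=-Y^*$ satisfies $d\widetilde{Y}(s)=-\delta\,ds-2\sqrt{|Y^*(s)|}\,dW(s)$. Since $|Y^*|=|\widetilde{Y}|$ and $\widetilde{W}:=-W$ is again a standard Brownian motion, this reads $d\widetilde{Y}(s)=(-\delta)\,ds+2\sqrt{|\widetilde{Y}(s)|}\,d\widetilde{W}(s)$ with $\widetilde{Y}(0)=-a$, i.e.\ exactly \eqref{SDE} with parameters $(-\delta,-a)$. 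The uniqueness of strong solutions to \eqref{SDE} then identifies the law of $-Y^*$ as ${\tt BESQ}^*_{-a}(-\delta)$.

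For part (b) I would stop the process at its first hitting time of $0$ and work on $(0,\infty)$, where $|x|=x$ and \eqref{SDE} has generator $\mathcal{L}=2x\frac{d^2}{dx^2}+\delta\frac{d}{dx}$. Solving $\mathcal{L}s=0$ gives the scale density $s'(x)=x^{-\delta/2}$, hence $s(x)=x^{1-\delta/2}/(1-\delta/2)$ for $\delta\neq2$ and $s(x)=\log x$ for $\delta=2$. The classical accessibility criterion for a diffusion then reads the boundary behaviour off $s$: starting from $a>0$, the probability of reaching $0$ before a level $b>a$ equals $(s(b)-s(a))/(s(b)-s(0+))$. For $\delta<2$ one has $s(0+)$ finite and $s(\infty)=+\infty$, so this probability tends to $1$ as $b\to\infty$ and $0$ is hit almost surely; for $\delta\geq2$ one has $s(0+)=-\infty$, so $0$ is inaccessible and, started from $a>0$, the process remains strictly positive.

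For part (c), nonnegativity when $\delta\geq0$ follows from the Yamada--Watanabe comparison theorem: the diffusion coefficient $x\mapsto2\sqrt{|x|}$ is H\"older-$1/2$, so comparing $Y^*$ (drift $\delta\geq0$, start $a\geq0$) with the solution of \eqref{SDE} of drift $0$ started at $0$, which is identically $0$, gives $Y^*\geq0$. That $0$ is absorbing precisely when $\delta=0$ is elementary: for $\delta=0$ both coefficients of \eqref{SDE} vanish at $0$, so the unique solution started at $0$ is $\equiv0$ and the strong Markov property forces absorption at the hitting time, while for $\delta>0$ one has $\EE[Y^*_t]=a+\delta t>0$, so the path cannot remain at $0$. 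The remaining claim, that for $\delta\in(0,2)$ the boundary $0$ is instantaneously reflecting---hit by part (b), yet with $\int_0^\infty\mathbf{1}_{\{Y^*_s=0\}}\,ds=0$---is where I expect the main difficulty, since it rests on the full It\^o--McKean boundary classification through the speed measure rather than on the scale function alone; here I would invoke \cite[Proposition XI.1.5]{revuzyor99} directly.
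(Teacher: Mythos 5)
The paper offers no proof of Proposition \ref{pathBESQ} at all: each part is quoted directly from the literature, (a) and (b) from \cite{jaeschkeyor03} and (c) from \cite[Proposition XI.1.5]{revuzyor99}, and no proof environment follows the statement. Your proposal therefore takes a genuinely different route, reconstructing the facts from standard one-dimensional diffusion theory, and it is essentially correct. For (a), negating \eqref{SDE} together with its driving Brownian motion and invoking uniqueness of solutions is exactly the right mechanism (uniqueness is available here either because Definition \ref{defnBESQ} already posits unique strong solutions, or by the Yamada--Watanabe criterion, since $x\mapsto 2\sqrt{|x|}$ is H\"older-$1/2$). For (b), the scale computation $s'(x)=x^{-\delta/2}$ and the dichotomy between $s(0+)$ finite ($\delta<2$) and $s(0+)=-\infty$ ($\delta\ge 2$) correctly yields the hitting behaviour; note that no explosion discussion is needed for the limit $b\to\infty$, since on the event $\{T_0<\infty\}$ the path is continuous and hence bounded up to $T_0$, so $\{T_0<\infty\}\subseteq\bigcup_{b\in\NN}\{T_0<T_b\}$, each of which is null when $s(0+)=-\infty$. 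For (c), the comparison argument for nonnegativity is sound, and deferring instantaneous reflection to \cite[Proposition XI.1.5]{revuzyor99} is precisely what the paper itself does; the one step you should tighten is the claim $\EE[Y^*_t]=a+\delta t$, which presupposes that the local martingale $\int_0^t 2\sqrt{|Y^*_s|}\,dW(s)$ is a true martingale --- true here by moment bounds, but unnecessary: if a path of $Y^*$ sat at $0$ on a nondegenerate interval, the integrand $2\sqrt{|Y^*|}$ would vanish there, so \eqref{SDE} would force $Y^*$ to increase at rate $\delta>0$ on that interval, a contradiction, which gives non-absorption for $\delta>0$ pathwise. In sum, what the paper's citations buy is brevity and exactly the statements needed later (notably in Corollary \ref{pathBESQ2}); what your route buys is a self-contained justification exhibiting the scale-function and comparison mechanisms behind these dichotomies, at the cost of length and of the boundary-classification machinery you still must import for the reflection claim.
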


In particular, we have ${\tt BESQ}_a(\delta)={\tt BESQ}_a^*(\delta)$ for $\delta\ge 0$. For negative dimension $-\delta$, $\delta>0$, a ${\tt BESQ}_a(-\delta)$ process is by 
definition obtained as $Y=(Y^*_{s\wedge\zeta})_{s\ge 0}$ from a ${\tt BESQ}_a^*(-\delta)$ process $Y^*=(Y^*_s)_{s\ge 0}$, by stopping $Y^*$ at the first hitting time 
$\zeta:=\inf\{s\ge 0\colon Y^*_s=0\}$ of 0; by the Markov property of ${\tt BESQ}_a^*(-\delta)$ and by Proposition \ref{pathBESQ} (a), the process 
$\overline{Y}:=(-Y^*_{\zeta+s})_{s\ge 0}$ is then a ${\tt BESQ}_0(\delta)$ process independent of $Y$. Vice versa, we can replace the absorbed part of a ${\tt BESQ}_a(-\delta)$ 
process $Y$ by the negative of an independent ${\tt BESQ}_0(\delta)$ process $\overline{Y}$ as $Y_s^*=Y_s$, $s\le\zeta$, and $Y_{\zeta+s}^*=-\overline{Y}_s$, $s\ge 0$, to 
construct a ${\tt BESQ}^*_a(-\delta)$ process $Y^*$. A slight refinement of such arguments yields the following corollary, which uses notation from Lemma \ref{taucty}.

%

\begin{cor} \label{pathBESQ2}

When $\delta > 0$ and $a\geq 0$, almost all the paths of a ${\tt BESQ}_a^*(-\delta)$ process lie in $C([0,\infty),\RR)\cap \DD^{*}([0,\infty),\RR)$.

\end{cor}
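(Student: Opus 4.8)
The plan is to verify the three defining requirements of $\DD^{*}([0,\infty),\RR)$ separately, the continuity membership being automatic since ${\tt BESQ}_a^*(-\delta)$ is by Definition \ref{defnBESQ} a distribution on $C([0,\infty),\RR)$. Writing $Y^*$ for the process and $\zeta:=\inf\{s\ge 0\colon Y^*_s=0\}$ for its first hitting time of $0$, I would base everything on the decomposition recorded just before the corollary, namely that $\overline{Y}:=(-Y^*_{\zeta+s})_{s\ge 0}$ is a ${\tt BESQ}_0(\delta)$ process. In the notation of Lemma \ref{taucty}, the three things to check are $\tau(Y^*)<\infty$, that $Y^*_t\le 0$ for every $t\ge\tau(Y^*)$, and that $\tau^{<}(Y^*)=\tau(Y^*)$.

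First I would show $\tau(Y^*)=\zeta<\infty$ almost surely. Since $\delta>0$, the dimension $-\delta$ of $Y^*$ satisfies $-\delta<2$, so by Proposition \ref{pathBESQ}(b) the process hits $0$ almost surely, i.e.\ $\zeta<\infty$. To see $\tau(Y^*)=\zeta$, note that for $a=0$ both equal $0$, while for $a>0$ the continuity of $Y^*$ together with $Y^*_0=a>0$ forces the first time $Y^*$ is $\le 0$ to coincide with the first time it equals $0$ (a strictly negative value before $\zeta$ would, by the intermediate value theorem, produce an earlier zero). This yields the finiteness requirement and also the auxiliary fact that $Y^*_t>0$ for all $t<\zeta$.

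Next I would read off the absorption-type behaviour from $\overline{Y}$. Because $\delta>0$, Proposition \ref{pathBESQ}(c) gives $\overline{Y}_s\ge 0$ for all $s\ge 0$, i.e.\ $Y^*_t=-\,\overline{Y}_{t-\zeta}\le 0$ for every $t\ge\zeta=\tau(Y^*)$, which is the second requirement. For the third requirement, $\tau^{<}(Y^*)=\tau(Y^*)$, I would combine $Y^*>0$ on $[0,\zeta)$ (so that $\tau^{<}(Y^*)\ge\zeta$) with the fact that $\overline{Y}$ leaves $0$ immediately: for every fixed $s>0$ one has $\overline{Y}_s>0$ almost surely, so choosing a sequence $s_n\downarrow 0$ yields $Y^*_{\zeta+s_n}<0$ for all $n$ almost surely, whence $\tau^{<}(Y^*)\le\zeta$. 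Thus $\tau^{<}(Y^*)=\zeta=\tau(Y^*)$, as needed.

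The one genuinely non-trivial input is the last claim, that a ${\tt BESQ}_0(\delta)$ process with $\delta>0$ is strictly positive at every fixed positive time, equivalently that it leaves $0$ instantaneously; this is what forces $\tau^{<}=\tau$ and is the main obstacle. I would justify it by splitting cases: for $\delta\in(0,2)$ the state $0$ is instantaneously reflecting by Proposition \ref{pathBESQ}(c), and for $\delta\ge 2$ the process started from $0$ enters $(0,\infty)$ immediately (and, for $\delta>2$, never returns), so in either regime $\{s>0\colon\overline{Y}_s>0\}$ has infimum $0$. Intersecting the almost sure events $\{\overline{Y}_{s_n}>0\}$ over a countable sequence $s_n\downarrow 0$ then completes the argument and places almost every path in $C([0,\infty),\RR)\cap\DD^{*}([0,\infty),\RR)$.
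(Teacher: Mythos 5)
Your proposal is correct and takes essentially the same route as the paper: both proofs rest on Proposition \ref{pathBESQ}(b) to get $\tau(Y^*)=\zeta<\infty$, on Proposition \ref{pathBESQ}(a) to view the (negated) process after hitting $0$ as a nonnegative ${\tt BESQ}(\delta)$, and on Proposition \ref{pathBESQ}(c) to rule out the path sitting at $0$, the only organisational difference being that the paper argues by contradiction (if $\tau^{<}(f)>\tau(f)$ then $f\equiv 0$ on $[\tau(f),\tau^{<}(f))$, violating (c)) whereas you argue directly through the decomposition $\overline{Y}=(-Y^*_{\zeta+s})_{s\ge 0}$ recorded just before the corollary. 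One small wrinkle: your closing step of intersecting the events $\{\overline{Y}_{s_n}>0\}$ at \emph{fixed} times $s_n\downarrow 0$ needs $\PP(\overline{Y}_s=0)=0$ for fixed $s>0$, which is true (the ${\tt BESQ}_0(\delta)$ marginals are Gamma distributed) but is not literally what instantaneous reflection states; your preceding observation that $\inf\{s>0\colon\overline{Y}_s>0\}=0$ almost surely already suffices, taking the times $s_n$ random, so the argument stands after this cosmetic adjustment.
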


\begin{proof}
Let $f$ be a path of ${\tt BESQ}_a^*(-\delta)$. By Proposition \ref{pathBESQ} (b), $f$ hits $0$ almost surely. By Proposition \ref{pathBESQ} (a), $-f$ is a 
${\tt BESQ}_{-a}(\delta)$ path. By Proposition \ref{pathBESQ} (c), applied to $-f$,\vspace{-0.1cm}
\begin{itemize}
\item $f \leq 0$ on $[\tau(f),\infty)$ a.s.\ if $-\delta \in (-2,0)$,\vspace{-0.2cm}
\item $f < 0$ on $(\tau(f),\infty)$ a.s.\ if $-\delta \leq -2$.\vspace{-0.1cm}
\end{itemize}

It is clear by a.s.\ continuity of $f$ that $f\big(\tau(f)\big) = 0$, since $f > 0$ on $[0,\tau(f))$. To show that $\tau^{<}(f) = \tau(f)$, suppose for a contradiction that this was not the case. Since $f \leq 0$ on $[\tau(f),\infty)$, $\tau^{<}(f) > \tau(f)$ would imply that $f =0$ on $[\tau(f), \tau^{<}(f))$. This contradicts part (c) of Proposition \ref{pathBESQ}, and so $\tau^{<}(f) = \tau(f)$. Hence $f \in \DD^{*}([0,\infty),\RR)$ a.s.
\end{proof}

By Proposition \ref{pathBESQ} (c), we have  $\tau^{<}(f)=\infty$ for paths $f$ of a ${\tt BESQ}_a^*(\delta)$, $\delta\ge 0$. In particular the conclusion of Corollary \ref{pathBESQ2} fails for such parameters.\pagebreak

\subsubsection{Martingale problems for diffusions}\label{sec:martprob}

To discuss some of the general theory needed for a proof later, we follow \cite{jacodshiryaev02}. In what follows, $\big(\Omega, (\Ff_t)_{t\geq 0},\Ff,\PP\big)$ is a filtered probability space and $\mathbf{F} = (\Ff_t)_{t\geq 0}$. 

\begin{defn}[Continuous semi-martingale] A \emph{continuous semi-martingale} $X=(X_t)_{t \geq 0}$ is a real-valued stochastic process on $(\Omega,\Ff,\PP)$ of the form $X_t= X_0 +M_t + B_t$, where $X_0$ is $\Ff_0$-measurable, $M$ a continuous local $\mathbf{F}$-martingale and $B$ a continuous $\mathbf{F}$-adapted process of finite variation, with $M_0\!=\!B_0\!=\!0$. The \emph{quadratic variation} $C = \langle M \rangle$ of $X$ is the unique continuous increasing $\mathbf{F}$-adapted process such that $M_{t}^{2} \!-\! \langle M \rangle_t$, $t\!\ge\! 0$, is a local martingale. We refer to $(B,C)=(B_t,C_t)_{t \geq 0}$ as the \emph{characteristics} of $X$.

If $\eta$ is any distribution on $\RR$, we say the \emph{martingale problem} $s\big(\sigma(X_0),X\,|\,\eta ; B,C\big)$ has a solution $P$ if $P$ is a probability measure on $(\Omega,\Ff)$ with $P(X_0 \in\,\cdot\,) = \eta$, and $X$ is a continuous semi-martingale on $(\Omega,\mathbf{F},\Ff,P)$ with characteristics $(B,C)=(B_t,C_t)_{t \geq 0}$.

\end{defn}

\begin{defn}[Homogeneous diffusion] A \emph{homogeneous diffusion} is a continuous semi-mart\-ingale $X$ on $(\Omega,\mathbf{F},\Ff,\PP)$ for which there exist Borel functions 
  $b\colon\RR\rightarrow\RR$ and $c\colon\RR\rightarrow[0,\infty)$ called the \emph{drift coefficient} and \emph{diffusion coefficient} such that $B_{t} = \int_{0}^{t} b(X_s) ds$ 
  and $C_{t} = \int_{0}^{t} c(X_s) ds$, $t\ge 0$.
\end{defn}

\rm
Note in particular that a ${\tt BESQ}^*_a(\delta)$ process is a homogeneous diffusion with drift coefficient $b(x) = \delta$ and diffusion coefficient $c(x) =4|x|$, 
$x\in\mathbb{R}$. This can be seen from the SDE characterisation in Definition \ref{defnBESQ} and \eqref{SDE}.

We will need two results from \cite{jacodshiryaev02}.
\begin{lem} \cite[Theorem III.2.26]{jacodshiryaev02} \label{BESQmg} Consider a homogeneous diffusion $X$ and let our notation be as above. Then the martingale problem $s\big(\sigma(X_0),X\, |\,\eta ; B,C\big)$ has a unique solution if and only if the SDE $dY_t = b(Y_t) dt + \sqrt{c(Y_t)} dW_t$, $Y_0 = \xi$, has a unique weak solution, for $(W_t)_{t\geq 0}$ an $\mathbf{F}$-Brownian motion and $\xi$ an $\Ff_0$-measurable random variable with distribution $\eta$. 

\end{lem}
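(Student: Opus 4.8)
This is the classical equivalence between martingale problems and weak solutions of SDEs, recorded as \cite[Theorem III.2.26]{jacodshiryaev02}; let me outline the argument I would give. The plan is to exhibit a correspondence, at the level of laws, between solutions of the martingale problem $s\big(\sigma(X_0),X\,|\,\eta;B,C\big)$ and weak solutions of the SDE $dY_t=b(Y_t)\,dt+\sqrt{c(Y_t)}\,dW_t$ with $Y_0\sim\eta$. Since both a solution of the martingale problem and a weak solution of the SDE are notions defined purely through the law of the path, once this correspondence is seen to be a bijection on laws, existence and uniqueness in law transfer in both directions at once.

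First I would check that every weak solution of the SDE solves the martingale problem. Given a weak solution $(Y,W)$ with $Y_0\sim\eta$, put $M_t:=\int_0^t\sqrt{c(Y_s)}\,dW_s$ and $B_t:=\int_0^t b(Y_s)\,ds$. Then $M$ is a continuous local $\mathbf{F}$-martingale with $M_0=0$, and by the quadratic-variation formula for stochastic integrals $\brakM_t=\int_0^t c(Y_s)\,ds=C_t$; moreover $B$ is continuous, adapted and of finite variation with $B_0=0$. Hence $Y_t=Y_0+M_t+B_t$ is a continuous semi-martingale with characteristics $(B,C)$, so its law solves the martingale problem.

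The more delicate converse recovers a weak solution from a solution $P$ of the martingale problem. Writing $X_t=X_0+M_t+B_t$ with $\brakM_t=\int_0^t c(X_s)\,ds$ and $B_t=\int_0^t b(X_s)\,ds$, the key step is a martingale-representation argument: on a possibly enlarged probability space one constructs an $\mathbf{F}$-Brownian motion $W$ with $dM_t=\sqrt{c(X_t)}\,dW_t$, setting $dW_t=c(X_t)^{-1/2}\,dM_t$ on $\{c(X_t)>0\}$ and supplying the increments of $W$ from an independent Brownian motion on $\{c(X_t)=0\}$. Then $X$ satisfies the SDE driven by $W$, so the law of $X$ is a weak solution.

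Since both constructions preserve the law of the entire path and the law $\eta$ of the initial value, the families of attainable laws coincide; existence for one problem is therefore equivalent to existence for the other, and the solution is unique in law for one exactly when it is for the other. I expect the main obstacle to be precisely the representation step in the degenerate regime $c(X_s)=0$: the enlargement of the probability space needed to define $W$ there must leave the law of $X$ untouched, which is the technical heart of \cite[Theorem III.2.26]{jacodshiryaev02}.
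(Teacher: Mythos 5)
The paper does not prove this lemma at all: it is imported verbatim as \cite[Theorem III.2.26]{jacodshiryaev02} and used as a black box, so there is no internal proof to compare against. Your sketch reconstructs the standard argument behind the cited theorem (the same equivalence appears, e.g., as Proposition 5.4.6 in Karatzas--Shreve and in Stroock--Varadhan), and it is correct in outline. The easy direction (weak solution $\Rightarrow$ martingale problem solution, via $M_t=\int_0^t\sqrt{c(Y_s)}\,dW_s$, $\langle M\rangle_t=\int_0^t c(Y_s)\,ds$) is complete as stated. The converse is where the content lies, and you identify it correctly: one sets
$$W_t=\int_0^t \mathbf{1}_{\{c(X_s)>0\}}\,c(X_s)^{-1/2}\,dM_s+\int_0^t \mathbf{1}_{\{c(X_s)=0\}}\,dW'_s$$
on an enlargement carrying an independent Brownian motion $W'$, checks $\langle W\rangle_t=t$ and invokes L\'evy's characterisation, and then uses that $\int_0^t\mathbf{1}_{\{c(X_s)=0\}}\,dM_s$ vanishes (its quadratic variation is $\int_0^t\mathbf{1}_{\{c(X_s)=0\}}c(X_s)\,ds=0$) to conclude $M_t=\int_0^t\sqrt{c(X_s)}\,dW_s$. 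One point worth making explicit if you were to write this out in full: the paper's definition poses the martingale problem on a fixed filtered space $(\Omega,\mathbf{F},\Ff)$ with a given process $X$, so your identification of ``solutions'' with laws on path space—which is what makes existence and uniqueness transfer in both directions—implicitly requires taking $\Omega$ to be the canonical space with $X$ the coordinate process; this is exactly the setting of Jacod--Shiryaev and is how the lemma is applied in the paper (in the proof of Theorem \ref{thm:lim1}), so it is a matter of bookkeeping rather than a gap.
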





\begin{lem}\cite[Theorem IX.4.21]{jacodshiryaev02} \label{limitthm} Suppose that for any $a \in \RR$, the martingale problem $s\big(\sigma(X_0), X\,|\,\delta_a;B,C\big)$ has a unique 
  solution $P_a$, and that $a \mapsto P_a(A)$ is Borel measurable for all $A \in \Ff$. For each $n\ge 1$, let $X^{n}$ be a Markov process started from an initial distribution $\eta^n$, where 
  $X^{n}$ has generator $\Ll^{n}$ of the form $$\Ll^{n}f(x) = \int_{y\in\RR}\big{(}f(x+y) - f(x)\big{)}K^{n}(x,dy),$$ for a finite kernel  $K^n$ on $\RR$.
  Suppose further that the functions $b^{n}$ and $c^{n}$ given by 
\begin{center}
\vspace{-0.5cm}
\begin{equation*}
b^{n}(x):= \int_{y\in\RR}y K^{n}(x,dy)\quad \textrm{and}\quad c^{n}(x):= \int_{y\in\RR}y^2 K^{n}(x,dy),\quad x\in\RR,
\end{equation*}
\end{center}
are well-defined and finite, and that the drift and diffusion coefficients $b$ and $c$ and the initial distribution $\eta$ of $X$ are such that, as $n\rightarrow\infty$, 

\begin{itemize}

\item[(a)]$b^n \rightarrow b$ and $c^n \rightarrow c$ locally uniformly,
\item[(b)]$\sup_{\{x\colon |x| \leq r\}} \int_{y\in\RR} |y|^2 \mathbf{1}_{\{|y| > \varepsilon \}}K^{n}(x,dy)  \rightarrow 0$ for any $\varepsilon > 0$ and any $r > 0$,
\item[(c)]$\eta^{n}\rightarrow \eta$ weakly on $\RR$.
\end{itemize}
Then $X^{n}$ converges in distribution to $X$ as $n\rightarrow\infty$ under the Skorokhod topology on $\DD([0,\infty),\RR)$.

\end{lem}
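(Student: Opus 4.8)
The plan is to follow the classical route for establishing convergence to a diffusion via martingale problems, namely \emph{tightness}, \emph{identification of limit points}, and \emph{uniqueness}; this is precisely the scheme underlying \cite[Theorem IX.4.21]{jacodshiryaev02}, so in the paper one may simply invoke that result, but I outline a self-contained argument. Throughout, write $\mathcal{A}f(x):=b(x)f'(x)+\tfrac12 c(x)f''(x)$ for the generator of the candidate limit diffusion, acting on test functions $f\in C_c^\infty(\RR)$, noting that $\mathcal{A}$ is exactly the generator whose associated SDE $dY_t=b(Y_t)\,dt+\sqrt{c(Y_t)}\,dW_t$ underlies the martingale problem $s(\sigma(X_0),X\,|\,\eta;B,C)$.

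First I would carry out the identification step, since it exhibits the role of the hypotheses. For each $n$ and each $f\in C_c^\infty(\RR)$, the generator form of $\Ll^n$ makes
$$M^{n,f}_t:=f(X^n_t)-f(X^n_0)-\inttot\Ll^n f(X^n_s)\,ds$$
a martingale. A second-order Taylor expansion $f(x+y)-f(x)=yf'(x)+\tfrac12 y^2f''(x)+r(x,y)$ gives
$$\Ll^n f(x)=b^n(x)f'(x)+\tfrac12 c^n(x)f''(x)+\int_{\RR}r(x,y)\,K^n(x,dy).$$
Splitting the remainder integral at $|y|=\varepsilon$, the region $|y|\le\varepsilon$ contributes at most $\tfrac12\,\omega_{f''}(\varepsilon)\,c^n(x)$, with $\omega_{f''}$ the modulus of continuity of $f''$, while on $|y|>\varepsilon$ the integrand is dominated by a constant multiple of $|y|^2\mathbf{1}_{\{|y|>\varepsilon\}}$, so hypothesis (b) annihilates it. Together with the local uniform convergence $b^n\to b$, $c^n\to c$ of hypothesis (a), this yields $\Ll^n f\to\mathcal{A}f$ locally uniformly. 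Hence any weak subsequential limit $X$ of $(X^n)$ satisfies that $f(X_t)-f(X_0)-\inttot\mathcal{A}f(X_s)\,ds$ is a martingale for all such $f$; since hypothesis (b) also forces the jumps of $X^n$ to vanish in the limit, $X$ has continuous paths, and hypothesis (c) fixes its initial law as $\eta$, so $X$ solves $s(\sigma(X_0),X\,|\,\eta;B,C)$.

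The tightness step is where the main work lies. I would view each $X^n$ as a semimartingale with first characteristic $B^n_t=\inttot b^n(X^n_s)\,ds$, modified second characteristic $\inttot c^n(X^n_s)\,ds$, and no continuous martingale part, and verify an Aldous--Rebolledo criterion (e.g.\ \cite[VI.4.18]{jacodshiryaev02}): control of the increments of these characteristics over small random time windows reduces, by (a), to bounds on $b$ and $c$ over the range of the process, while (b) guarantees that the limit points are $C$-tight, i.e.\ supported on continuous paths. The genuine subtlety is \emph{compact containment}, since the coefficients need not be bounded (for instance $c(x)=4|x|$ in the ${\tt BESQ}$ applications): I would localise by stopping $X^n$ on first exit from $[-R,R]$, establish tightness of the stopped processes, and remove the localisation via a Lyapunov/moment estimate showing that the exit times tend to $\infty$ in probability as $R\to\infty$, uniformly in $n$. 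I expect this compact-containment estimate to be the main obstacle, as identification and uniqueness are essentially forced by the hypotheses.

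Finally, the uniqueness hypothesis closes the argument: by assumption the martingale problem has a unique solution $P_a$ from each point mass $\delta_a$ with $a\mapsto P_a$ measurable, so the mixture $\int P_a\,\eta(da)$ is the unique solution started from $\eta$. Every subsequential limit identified above is such a solution, hence they all coincide, and relative compactness upgrades to convergence in distribution on $\DD([0,\infty),\RR)$. In the intended applications (Theorems \ref{thm:lim1} and \ref{thm:lim2}) the moment control needed for compact containment is supplied directly by the explicit linear drift and diffusion coefficients of the limiting ${\tt BESQ}$ processes.
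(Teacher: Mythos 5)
The paper does not prove this lemma at all: it is imported verbatim from the literature, with the citation \cite[Theorem IX.4.21]{jacodshiryaev02} serving as the entire justification, so there is no internal proof to compare yours against. What you have written is a reconstruction of the standard tightness--identification--uniqueness scheme that underlies the cited theorem, and in outline it is sound: the Taylor expansion of $f(x+y)-f(x)$ with the split at $|y|=\varepsilon$ correctly shows, via hypotheses (a) and (b), that $\Ll^n f\rightarrow b f' + \tfrac12 c f''$ locally uniformly for $f\in C_c^\infty(\RR)$; hypothesis (b) is likewise the right condition to force $C$-tightness (continuity of limit paths); and the uniqueness-plus-measurability hypothesis is exactly what lets you identify every subsequential limit with the mixture $\int P_a\,\eta(da)$ and upgrade relative compactness to convergence on $\DD([0,\infty),\RR)$. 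The one place where your argument is a plan rather than a proof is the compact containment/tightness step, which you candidly flag: with unbounded coefficients (as in the ${\tt BESQ}$ applications, where $c(x)=4|x|$) the Lyapunov/localisation estimate you allude to still has to be carried out, and this is precisely the part that Jacod--Shiryaev's semimartingale machinery (convergence of characteristics under local strong majoration) handles internally. So the trade-off is clear: the paper's route is short and fully rigorous because it delegates the hard analysis to a standard reference, while your route exhibits the mechanism---why (a), (b), (c) are the natural hypotheses---at the cost of leaving the technically hardest estimate as a sketch. As a blind reconstruction of a quoted result this is a correct and well-organised account, but if it were to replace the citation in the paper it would need the compact containment argument made precise (in the applications of Theorems \ref{thm:lim1} and \ref{thm:lim2}, the linear growth of $b^n$ and $c^n$ does make a moment bound of the form $\sup_n\EE\big(\sup_{s\le t}|X^n_s|\big)<\infty$ accessible, so the gap is fillable).
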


\subsection{Proofs of Theorems \ref{thm:lim1}, \ref{thm:lim2} and \ref{thm:lim3}}\label{sec:proofs}

Recall that Theorem \ref{thm:lim1} claims that the distributional scaling limit of the table size evolution of an up-down oCRP$(\alpha,\theta)$ is ${\tt BESQ}(-2\alpha)$ and that this convergence holds jointly with
the convergence of absorption times in 0.

\begin{proof}[Proof of Theorem \ref{thm:lim1}]
Let $0\le\alpha\le 1$ and $a\in\RR$. Observe by Lemma \ref{BESQmg} that the martingale problem associated to the SDE characterising the ${\tt BESQ}_a(\delta)$ diffusion has a unique solution $P_a$. 
Showing that $a\mapsto P_a(A)$ is Borel measurable is fairly elementary, and so we leave this to the reader.
Consider an extension $Z_n^*$ of the table size Markov chain $Z_{n}$ to $\mathbb{Z}$ that further transitions from $0$ into $-1$ at rate $\alpha$, and from state $-m$, $m \geq 1$, to $-m-1$ at rate $m$ and into $-m+1$ at rate $m-\alpha$. This chain is an instance of a Markov chain
on $\RR$ with generator $\Ll$ given by
$$\Ll f(x) = \big{(}(|x|\vee\alpha) -\alpha\big{)} \big(f(x+1)-f(x)\big) + \big(|x|\vee \alpha\big{)} \big{(}f(x-1) - f(x)\big{)}, \ \ x \in\RR.$$

Let $X^n:=\big(n^{-1}Z^*_n(2ns),s\ge 0\big)$. Then $X^n$ has generator 
$$\Ll^n f(x) = \int_{y\in\RR} \big(f(x+y)-f(x)\big) K^{n}(x,dy),$$ 
where the increment kernel $K^n (x,dy)$ is given by
$$K^{n}(x,dy)=n\Big(\big(\left(2n|x|\vee2\alpha\right)-2\alpha\big) \: \delta_{1/n}(dy) +\left(2n|x|\vee2\alpha\right)\: \delta_{-1/n}(dy)\Big),\quad x \in \RR.$$
Now
\begin{align*}\quad\int_{y\in\RR} y K^{n}(x,dy) &= (2n|x|\vee2\alpha)-2\alpha -(2n|x|\vee2\alpha) = -2\alpha,\\
\textrm{and}\ \int_{y\in\RR} y^2 K^{n}(x,dy) &= \frac{2n|x|\vee2\alpha - 2\alpha}{n} + \frac{2n|x|\vee2\alpha}{n}= 4\left(|x|\vee\frac{\alpha}{n}\right) - \frac{2\alpha}{n}\rightarrow 4|x|.
\end{align*}
This convergence is uniform in $x\in\RR$ under the supremum norm $\|\cdot \|$ since 
$$\left\|\: 4\left(|x|\vee\frac{\alpha}{n}\right) - \frac{2\alpha}{n} - 4|x|\: \right\|
  = \left\|\: 4|x| + \frac{2\alpha}{n}-4\left(|x|\vee\frac{\alpha}{n}\right) \: \right\| = \frac{2\alpha}{n} \rightarrow 0.$$
Moreover, for any $r \geq 0$, $\varepsilon >0$ and $n$ sufficiently large, $$\sup_{\{x:|x| \leq r\}} \int_{y\in\RR} |y|^2 \mathbf{1}_{\{|y| > \varepsilon \}} K^{n}(x,dy) =0.$$ 
It is clear that as $n\rightarrow\infty$, $n^{-1}\lfloor{nz}\rfloor\rightarrow z$. This shows that all the assumptions of Lemma \ref{limitthm} are satisfied.
It follows that the law of $X^n$ converges weakly in the Skorokhod topology on $\DD([0,\infty),\RR)$ to that of the diffusion process with drift coefficient $-2\alpha$ and diffusion coefficient $4|x|$, namely a ${\tt BESQ}_z^*(-2\alpha)$ process. 

Let $0<\alpha\le 1$. Then the paths of a ${\tt BESQ}^*_z(-2\alpha)$ process lie in $\DD^*([0,\infty),\RR)\cap C([0,\infty),\RR)$ by Corollary \ref{pathBESQ2} since $-2\alpha<0$. 
By applying Skorokhod's representation theorem on the separable space $\DD([0,\infty),\RR)$, we may assume that the convergence $X^n\rightarrow X$ that we have just established 
holds almost surely. By Lemma \ref{taucty}, this entails the convergence $\tau(X^n) \stackrel{a.s}{\longrightarrow} \tau(X)$. Therefore 
$\big(X^n, \tau(X^n)\big) \stackrel{a.s}{\longrightarrow} \big(X, \tau(X)\big)$, and this entails the claimed joint distributional convergence since $\tau(X^n)$ and $\tau(X)$ are the first hitting
times of 0 for the up-down chains $X^n$ and for the continuous $X$, none of which can reach the negative half-line without first visiting 0.

If instead $\alpha = 0$, then we cannot apply Corollary \ref{pathBESQ2}. Observe that the Markov chain $Z^*_n$ is a simple birth-death process. By a standard reference such as 
\cite[Corollary 6.11.12]{grimmettstirzaker01}, the hitting time $\tau(Z_n^*)$ of $0$ by a simple birth-death process $Z_n^*$ started from height $m \in \NN_0$ satisfies 
$$\PP\big(\tau(Z_n^*)\leq t\big) = \left(\frac{t}{t+1}\right)^m\quad\Rightarrow\quad\PP\big(\tau(X^n) \leq t \big) = \left(\frac{2nt}{2nt+1}\right)^{\lfloor{nz}\rfloor}.$$ 
We conclude that $\PP\big(\tau(X^n)\leq t\big)\rightarrow e^{-z/2t}$, as $n\rightarrow\infty$.

By \cite[p.319]{jaeschkeyor03}, it is known that $\zeta\stackrel{D}{=} z/2G$ for the absorption time $\zeta$ of a ${\tt BESQ}_z(\delta)$ and $G\sim {\tt Gamma}\big((2-\delta)/2,1\big)$ when $\delta < 2$. Hence 
$\PP(\zeta \leq t) =\PP\big({\tt Exp}(1) \geq z/2t\big) = e^{-z/2t}$ in the case $\delta=0$. Thus we still have $\tau(X^n)\stackrel{D}{\rightarrow}\tau(X)$, in the case when $\alpha = 0$. 

We need to strengthen this to joint 
convergence in distribution: since the distributions of pairs $\big(X^n,\tau(X^n)\big)$, $n\ge 1$, are tight, it suffices to show that any subsequential
distributional limit for the pairs is such that the limiting time is the extinction time of the limiting process. Specifically, if $\big(X^{n_k},\tau(X^{n_k})\big)$ converges, we may assume convergence holds almost surely, by Skorokhod's representation theorem, and the limit
$(\overline{X},\overline{\tau})$ is such that $\overline{\tau}\stackrel{D}{=}\tau(\overline{X})$, as the marginal distributions converge. Also,
we can use deterministic arguments as in the proof of Lemma \ref{taucty} to show that $\overline{\tau}\ge\tau(\overline{X})$ almost surely,
and with $\overline{\tau}\stackrel{D}{=}\tau(\overline{X})$, this yields $\overline{\tau}=\tau(\overline{X})$ almost surely, as required. 
Specifically, let $\varepsilon>0$. Since $X^{n_k}\rightarrow\overline{X}$ and $\tau(\overline{X})<\infty$ almost surely, we have
$\inf_{0\le t\le\tau(\overline{X})-\varepsilon}X^{n_k}(t)>0$ for sufficiently large $k$, i.e.\ $\tau(X^{n_k})\ge\tau(\overline{X})-\varepsilon$.
Since $\tau(X^{n_k})\rightarrow\overline{\tau}$ almost surely, this yields $\overline{\tau}\ge\tau(\overline{X})-\varepsilon$, and so $\overline{\tau}\ge\tau(\overline{X})$ almost surely, as $\varepsilon>0$ was arbitrary. 

To prove the original claims from those involving the modified processes, simply observe that $Z_n(s) = Z_n^*\big(s\wedge\tau(Z_n^*)\big)$, $s\ge 0$.
\end{proof}

Recall that Theorem \ref{thm:lim2} claims that the scaling limit of the evolution of the total number of customers in an up-down oCRP$(\alpha,\theta)$ is ${\tt BESQ}(2\theta)$.

\begin{proof}[Proof of Theorem \ref{thm:lim2}]
The convergence of the scaled evolution of the total number of customers to the ${\tt BESQ}(2\theta)$ diffusion is proved in the same way as Theorem \ref{thm:lim1}, where the extension of the integer-valued Markov chain $M_n$ to 
state space $\RR$ can be chosen to have generator 
$$\Ll f(x) := \big{(}|x|+\theta\big{)}\big{(}f(x+1)- f(x)\big{)} + |x|\big{(}f(x-1) - f(x)\big{)}$$ 
instead. Note that this Markov chain shares the relevant parts of the boundary behaviour of ${\tt BESQ}(2\theta)$ at 0 in that 0 is absorbing for $\theta=0$, while upward transitions from 0 are possible when $\theta>0$. The proof proceeds as above.  
\end{proof}


Let us turn to the proof of Theorem \ref{thm:lim3}. Specifically, we show that the L\'evy process $(X_t)_{t\ge 0}$ underlying the JCCP of the genealogy of the up-down 
oCRP$(\alpha,0)$ as in \eqref{eqn:Levyprocess} has a $(1+\alpha)$-stable L\'{e}vy process as its scaling limit, if $0< \alpha < 1$. 

%
%

\begin{proof}[Proof of Theorem \ref{thm:lim3}]   
Recall \eqref{eqn:Levyprocess}. Since $X_0/2n=\zeta_0/2n\rightarrow 0$ almost surely, we may assume in the sequel that $\zeta_0=0$ so that $(X_t)_{t\ge 0}$ is a L\'evy process starting
from $X_0=0$. By conditioning on the number $J_t$ of jumps in the time interval $(0,t]$, applying the independence of the jump heights $\zeta_i$ and using 
Proposition \ref{taudist}, 
\begin{align*}
\EE\left(e^{-\lambda X_t}\right) &= e^{\lambda t}\EE\left(\exp\left(-\lambda\sum_{i=1}^{J_t} \zeta_i\right)\right) = e^{\lambda t}\sum_{k=0}^{\infty} \left(\EE_{1}\left(e^{-\lambda \zeta}\right)\right)^k \PP(J_t = k)\\
&=e^{\lambda t}\sum_{k=0}^{\infty} \left(1 - \frac{\lambda}{\alpha} + \frac{\lambda^{1+\alpha}}{\alpha e^\lambda \Gamma(1+\alpha,\lambda)}\right)^k \frac{e^{-\alpha t}(\alpha t)^k}{k!}\\
&= e^{(\lambda-\alpha) t}\sum_{k=0}^{\infty} \left(\alpha t - \lambda t + \frac{\lambda^{1+\alpha} t}{e^\lambda \Gamma(1+\alpha,\lambda)}\right)^k \frac{1}{k!}=\exp\left(\frac{\lambda^{1+\alpha} t}{e^\lambda \Gamma(1+\alpha,\lambda)}\right).
\end{align*}
Hence $(X_t)_{t\ge 0}$ has Laplace exponent given by 
\begin{align*}
\varphi(\lambda) &= \frac{\ln \EE(e^{-\lambda X_t}) }{t} = \frac{\lambda^{1+\alpha} e^{-\lambda}}{\Gamma(1+\alpha,\lambda)}.
\end{align*}
Now note that the Laplace exponent of $(2n)^{-1}X_{2n^{1+\alpha}t}$ satisfies
\begin{align*}
\frac{\ln \EE\left(\exp\left(-\lambda (2n)^{-1}X_{2n^{1+\alpha}t}\right)\right)}{t} &= \frac{(\lambda/2n)^{1+\alpha} e^{-\lambda/2n}}{\Gamma(1+\alpha,\lambda/2n)} 2n^{1+\alpha}\\
  & = \frac{\lambda^{1+\alpha} e^{-\lambda/2n}}{2^{\alpha}\Gamma(1+\alpha,\lambda/2n)} \rightarrow \frac{\lambda^{1+\alpha}}{2^{\alpha}\Gamma(1+\alpha)}, \qquad \textrm{as } n\rightarrow \infty.
\end{align*}
By the convergence theorem for Laplace transforms, we have $(2n)^{-1}X_{2n^{1+\alpha}} \stackrel{D}{\rightarrow} \overline{X}_1$ as $n\rightarrow\infty$. By independence and 
stationarity of increments, we conclude by Lemma \ref{Jacodcadlagconvergence}: 
$$\left(\frac{X_{2n^{1+\alpha}t}}{2n}\right)_{t\geq 0} \stackrel{D}{\longrightarrow}\left(\overline{X}_t\right)_{t\geq 0}\qquad\mbox{ under the Skorokhod topology}$$ 
on $\DD([0,\infty),\RR)$, as $n\rightarrow\infty$.
\end{proof}

\subsection{Some consequences of the main results, and further questions}\label{sec:conseq}

In this section we explore further the connections between ${\tt BESQ}(-2\alpha)$ and ${\tt Stable}(1+\alpha)$ processes. 

\begin{prop}\label{stablelaw}

Let $\alpha\in(0,1)$, and denote by $Q_{y}^{-2\alpha}$ the law of a ${\tt BESQ}_{y}(-2\alpha)$ process started from $y>0$. If 
$\zeta$ denotes the absorption time of this process under $Q_y^{-2\alpha}$, then as $y \downarrow 0$, we have that $y^{-(1+\alpha)}Q_{y}^{-2\alpha}(\zeta > s)$ converges 
to $\widetilde{\Pi}(s,\infty)$, where $$\widetilde{\Pi}(ds) = \frac{1}{2^{1+\alpha}\Gamma(1+\alpha)}s^{-(2+\alpha)}ds$$ is the L\'{e}vy measure of a spectrally positive ${\tt Stable}(1+\alpha)$  
L\'{e}vy process.

\end{prop}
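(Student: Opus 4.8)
The plan is to establish this tail asymptotic by finding an explicit expression for the absorption-time distribution of ${\tt BESQ}_y(-2\alpha)$ and then performing a Taylor expansion in $y$ as $y\downarrow 0$. The most direct route uses the known result, already invoked in the proof of Theorem \ref{thm:lim1} and attributed to \cite[p.319]{jaeschkeyor03}, that for $\delta<2$ the absorption time $\zeta$ of a ${\tt BESQ}_y(\delta)$ process satisfies $\zeta\stackrel{D}{=}y/2G$, where $G\sim{\tt Gamma}\big((2-\delta)/2,1\big)$. Setting $\delta=-2\alpha$ gives $(2-\delta)/2 = 1+\alpha$, so $G\sim{\tt Gamma}(1+\alpha,1)$, and hence
$$Q_y^{-2\alpha}(\zeta>s) = \PP\Big(\frac{y}{2G}>s\Big) = \PP\Big(G<\frac{y}{2s}\Big) = \frac{1}{\Gamma(1+\alpha)}\int_0^{y/2s} u^{\alpha}e^{-u}\,du.$$

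The main computation is then the small-$y$ behaviour of this incomplete-Gamma-type integral. First I would divide by $y^{1+\alpha}$ and substitute to isolate the leading term: as $y\downarrow 0$, the upper limit $y/2s\to 0$, so $e^{-u}\to 1$ uniformly on the shrinking interval of integration, and
$$\int_0^{y/2s}u^\alpha e^{-u}\,du = \int_0^{y/2s}u^\alpha\,du\,\big(1+o(1)\big) = \frac{1}{1+\alpha}\Big(\frac{y}{2s}\Big)^{1+\alpha}\big(1+o(1)\big).$$
Dividing by $y^{1+\alpha}$ and letting $y\downarrow 0$ yields
$$y^{-(1+\alpha)}Q_y^{-2\alpha}(\zeta>s)\ \longrightarrow\ \frac{1}{(1+\alpha)\Gamma(1+\alpha)}\frac{1}{(2s)^{1+\alpha}} = \frac{1}{2^{1+\alpha}\Gamma(2+\alpha)}s^{-(1+\alpha)},$$
using $(1+\alpha)\Gamma(1+\alpha)=\Gamma(2+\alpha)$.

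It remains to check that this limit equals $\widetilde\Pi(s,\infty)$ for the stated measure. Integrating the claimed density gives
$$\widetilde\Pi(s,\infty) = \frac{1}{2^{1+\alpha}\Gamma(1+\alpha)}\int_s^\infty t^{-(2+\alpha)}\,dt = \frac{1}{2^{1+\alpha}\Gamma(1+\alpha)}\frac{s^{-(1+\alpha)}}{1+\alpha} = \frac{s^{-(1+\alpha)}}{2^{1+\alpha}\Gamma(2+\alpha)},$$
which matches the limit exactly, so the convergence holds as claimed. Finally I would confirm that $\widetilde\Pi$ is indeed the L\'evy measure of the ${\tt Stable}(1+\alpha)$ process of Theorem \ref{thm:lim3}: a spectrally positive stable process with Laplace exponent $\psi(\lambda)=\lambda^{1+\alpha}/2^\alpha\Gamma(1+\alpha)$ has L\'evy measure proportional to $s^{-(2+\alpha)}\,ds$ on $(0,\infty)$, and the constant $1/2^{1+\alpha}\Gamma(1+\alpha)$ is pinned down by the identity $\int_0^\infty(e^{-\lambda s}-1+\lambda s)s^{-(2+\alpha)}\,ds = \Gamma(-1-\alpha)\lambda^{1+\alpha}$ together with the reflection/recursion relations for the Gamma function. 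The only genuine obstacle is bookkeeping with the Gamma-function constants; the analytic content—replacing $e^{-u}$ by $1$ on a vanishing interval—is elementary and could alternatively be phrased as a direct application of the fundamental theorem of calculus, since $y^{-(1+\alpha)}Q_y^{-2\alpha}(\zeta>s)$ is, up to the constant $1/\Gamma(1+\alpha)$, a difference quotient whose limit is governed by the integrand at $0$.
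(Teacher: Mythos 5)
Your proof is correct and follows essentially the same route as the paper: both start from the identity $\zeta\stackrel{D}{=}y/2G$ with $G\sim{\tt Gamma}(1+\alpha,1)$ from \cite[p.319]{jaeschkeyor03}, write the tail as an incomplete-Gamma integral, and extract the limit; the only cosmetic difference is that the paper substitutes $x=2su/y$ and invokes monotone convergence, whereas you replace $e^{-u}$ by $1$ on the shrinking interval $[0,y/2s]$ --- both justifications are fine.

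One caveat about your final paragraph: the confirmation you sketch there would not go through as stated. The measure $\widetilde\Pi$ is the L\'evy measure of \emph{a} spectrally positive ${\tt Stable}(1+\alpha)$ process (which is all the proposition claims), but it is \emph{not} the L\'evy measure of the specific process of Theorem \ref{thm:lim3} with Laplace exponent $\psi(\lambda)=\lambda^{1+\alpha}/2^\alpha\Gamma(1+\alpha)$. Indeed, using $\int_0^\infty(e^{-\lambda s}-1+\lambda s)\,s^{-(2+\alpha)}\,ds=\Gamma(-1-\alpha)\lambda^{1+\alpha}$ and $\Gamma(-1-\alpha)=\Gamma(1-\alpha)/\alpha(1+\alpha)$, the measure $\widetilde\Pi$ corresponds to the Laplace exponent $\frac{\Gamma(1-\alpha)}{\alpha 2^{1+\alpha}\Gamma(2+\alpha)}\lambda^{1+\alpha}$, and the L\'evy measure of the Theorem \ref{thm:lim3} process is $c\widetilde\Pi$ with $c=2\alpha(1+\alpha)/\Gamma(1-\alpha)$ --- exactly the point the paper makes in the remark following the proposition. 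So you should either weaken that sentence to match the proposition's actual claim or carry the constant $c$ along explicitly.
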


\begin{proof}

Recall from \cite[p.319]{jaeschkeyor03} that under $Q_{y}^{-2\alpha}$, $\zeta\stackrel{D}{=} y/2G$, where $G\sim {\tt Gamma}(1+\alpha,1)$ has shape parameter $1+\alpha$ and rate parameter $1$. Observe that
\begin{equation*}
Q_{y}^{-2\alpha}(\zeta > s) = \PP\left(G < \frac{y}{2s}\right)= \int_{0}^{y/2s}\frac{1}{\Gamma(1+\alpha)}e^{-u} u^{\alpha} du.
\end{equation*}
Substituting $x = 2su/y$, we see that 
\begin{equation*}
y^{-(1+\alpha)}Q_{y}^{-2\alpha}(\zeta > s) = \int_{0}^{1} \frac{1}{\Gamma(1+\alpha) 2^{1+\alpha}} e^{-yx/2s} x^{\alpha} s^{-(1+\alpha)} dx.
\end{equation*}
By letting $y \downarrow 0$ and using monotone convergence, the right hand expression converges to
\begin{equation*}
\int_{0}^{1} \frac{1}{\Gamma(1+\alpha) 2^{1+\alpha}} x^{\alpha} s^{-(1+\alpha)} dx = \frac{1}{2^{1+\alpha}\Gamma(2+\alpha)}s^{-(1+\alpha)}.
\end{equation*}
Writing $\widetilde{\Pi}(s,\infty) = \int_{s}^{\infty}\widetilde{\Pi}(du)$, we observe that indeed $y^{-(1+\alpha)}Q_{y}^{-2\alpha}(\zeta > s)\rightarrow\widetilde{\Pi}(s,\infty)$ as $y \downarrow 0$.
\end{proof}


It is routine to verify that $\widetilde{\Pi}$ is the L\'evy measure of a stable process with Laplace exponent 
$$\frac{\Gamma(1-\alpha)}{\alpha 2^{1+\alpha}\Gamma(2+\alpha)}\lambda^{1+\alpha}.$$
In particular, the L\'evy measure of the ${\tt Stable}(1+\alpha)$ process in Theorem \ref{thm:lim3} is $c\widetilde{\Pi}$ where 
$c=2\alpha(1+\alpha)/\Gamma(1-\alpha)$. 

We can also approximate the L\'evy measure directly from the convergence in Theorem \ref{thm:lim3}, where pre-limiting jump sizes are governed by the integer-valued table size
process, while limiting jump sizes are governed by the L\'evy measure. As in Proposition \ref{taudist}, we will denote by $\PP_1$ the distribution of the 
table size process starting from 1, and by $\zeta$ the hitting time of 0 under $\PP_1$.


\begin{cor} 
For every $\varepsilon>0$,  it is the case that as $n\rightarrow\infty$,
\begin{align*}
2\alpha n^{1+\alpha}\PP_{1}\left(\frac{\zeta}{2n} > \varepsilon \right) &\longrightarrow \Pi(\varepsilon,\infty)\\
\textrm{and}\quad\ \  \PP_{1}\left(\left.\frac{\zeta}{2n} \in\; \cdot\,  \right|\frac{\zeta}{2n} > \varepsilon  \right) &\longrightarrow \frac{\Pi\big(\,\cdot\,\cap(\varepsilon,\infty)\big)}{\Pi(\varepsilon,\infty)},
\end{align*}
where $\Pi=c\widetilde{\Pi}$ is as above and the second convergence is in the sense of weak convergence.
\end{cor}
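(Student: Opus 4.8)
The plan is to read both convergences off the vague convergence of L\'evy measures that is already encoded in Theorem \ref{thm:lim3}. First I would identify the L\'evy measure of the rescaled process. Writing $X^n_t:=(2n)^{-1}X_{2n^{1+\alpha}t}$, the process $(X_t)_{t\ge 0}$ of \eqref{eqn:Levyprocess} is a spectrally positive compound Poisson process with unit negative drift whose jumps arrive at rate $\alpha$ with common law $\PP_1(\zeta\in\cdot\,)$, so its L\'evy measure is $\alpha\PP_1(\zeta\in\cdot\,)$ on $(0,\infty)$. Scaling time by $2n^{1+\alpha}$ multiplies the jump rate by $2n^{1+\alpha}$, while scaling space by $(2n)^{-1}$ pushes the jump law forward under $y\mapsto y/2n$; hence the L\'evy measure $F_{X^n}$ of $X^n$ satisfies $F_{X^n}(\cdot\,)=2\alpha n^{1+\alpha}\PP_1(\zeta/2n\in\cdot\,)$, which already explains the factor $2\alpha n^{1+\alpha}$ in the statement.

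Next I would feed Theorem \ref{thm:lim3} into Lemma \ref{Jacodcadlagconvergence}: since $X^n\stackrel{D}{\to}\overline{X}$ with $\overline{X}\sim{\tt Stable}(1+\alpha)$ and, as recorded after Proposition \ref{stablelaw}, $F_{\overline{X}}=\Pi=c\widetilde{\Pi}$, the lemma yields $F_{X^n}(g)\to\Pi(g)$ for every $g\in C_2([0,\infty),\RR)$. This is the engine for both claims; the one obstacle is that $\mathbf 1_{(\varepsilon,\infty)}$ is not in $C_2$, being discontinuous at $\varepsilon$. To prove the first convergence I would therefore sandwich it between piecewise-linear functions $g_\delta^-\le\mathbf 1_{(\varepsilon,\infty)}\le g_\delta^+$ in $C_2$ that differ from the indicator only on an interval of length $\delta$ adjacent to $\varepsilon$, with $g_\delta^-$ vanishing on $[0,\varepsilon]$ and $g_\delta^+$ vanishing on $[0,\varepsilon-\delta]$. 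Then $F_{X^n}(g_\delta^-)\le F_{X^n}(\varepsilon,\infty)\le F_{X^n}(g_\delta^+)$, and letting $n\to\infty$ traps the limit between $\Pi(g_\delta^-)$ and $\Pi(g_\delta^+)$; letting $\delta\downarrow 0$ and using that $\Pi$ has a density, hence no atom at $\varepsilon$, both bounds converge to $\Pi(\varepsilon,\infty)$, giving $2\alpha n^{1+\alpha}\PP_1(\zeta/2n>\varepsilon)=F_{X^n}(\varepsilon,\infty)\to\Pi(\varepsilon,\infty)$.

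For the second convergence I would write, for bounded continuous $h\ge 0$ (the general case following by splitting into positive and negative parts),
$$\EE_1\big[\,h(\zeta/2n)\mid \zeta/2n>\varepsilon\,\big]=\frac{F_{X^n}\big(h\mathbf 1_{(\varepsilon,\infty)}\big)}{F_{X^n}(\varepsilon,\infty)},$$
the factor $2\alpha n^{1+\alpha}$ cancelling. The denominator tends to $\Pi(\varepsilon,\infty)$ by the first part, and the numerator is handled by the same sandwiching applied to $hg_\delta^{\mp}\in C_2$: from $hg_\delta^-\le h\mathbf 1_{(\varepsilon,\infty)}\le hg_\delta^+$ one gets $\Pi(hg_\delta^-)\le\liminf_n F_{X^n}(h\mathbf 1_{(\varepsilon,\infty)})\le\limsup_n F_{X^n}(h\mathbf 1_{(\varepsilon,\infty)})\le\Pi(hg_\delta^+)$, and dominated convergence as $\delta\downarrow 0$ (legitimate since $\Pi$ is finite on $(\varepsilon-\delta_0,\infty)$) forces $F_{X^n}(h\mathbf 1_{(\varepsilon,\infty)})\to\Pi(h\mathbf 1_{(\varepsilon,\infty)})$. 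Dividing gives convergence of the conditional expectation of $h$ to $\Pi(h\mathbf 1_{(\varepsilon,\infty)})/\Pi(\varepsilon,\infty)$, i.e.\ testing against the normalised restriction $\Pi(\,\cdot\,\cap(\varepsilon,\infty))/\Pi(\varepsilon,\infty)$; as this holds for all such $h$, the asserted weak convergence follows.

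The main obstacle throughout is the discontinuity of $\mathbf 1_{(\varepsilon,\infty)}$, which prevents a direct appeal to Lemma \ref{Jacodcadlagconvergence}; it is circumvented by the $C_2$-sandwiching together with the absence of a $\Pi$-atom at $\varepsilon$. The only other point that must be got right is the L\'evy-measure identification in the first step, which, though routine, fixes the precise normalising constant $2\alpha n^{1+\alpha}$ that makes the stated limit $\Pi(\varepsilon,\infty)$ come out correctly.
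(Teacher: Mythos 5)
Your proposal is correct and follows essentially the same route as the paper: identify $F_{X^n}=2\alpha n^{1+\alpha}\PP_1(\zeta/2n\in\cdot\,)$, feed Theorem \ref{thm:lim3} into Lemma \ref{Jacodcadlagconvergence} to get convergence on $C_2$ test functions, and use atomlessness of $\Pi$ to pass to tail masses. The only (cosmetic) differences are that you spell out the $C_2$-sandwiching step the paper dismisses as ``easy to show,'' and you establish the conditional weak convergence by testing against bounded continuous functions where the paper instead checks pointwise convergence of the conditional tail probabilities $\PP_1(\zeta/2n>x\mid\zeta/2n>\varepsilon)\rightarrow\Pi(x,\infty)/\Pi(\varepsilon,\infty)$.
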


\begin{proof}

In the setting of the proof of Theorem \ref{thm:lim3}, with $X_0\!=\!0$, set $X^{n}_{t}:=(2n)^{-1}X_{2n^{1+\alpha}t}$, and write $F_{X^{n}}$ and $F_{\overline{X}}$ for the 
L\'{e}vy measures of $X^n$ and $\overline{X}$ respectively. Noting that $X^{n}_{0} = 0$, a combination of Lemma \ref{Jacodcadlagconvergence} and Theorem \ref{thm:lim3} shows that 
$F_{X^{n}}(g)\rightarrow F_{\overline{X}}(g)$ for every $g\in C_{2}([0,\infty),\RR)$. 
Since $X$  is a compound Poisson process with drift, observe that it has L\'{e}vy measure $\alpha\PP_1(\zeta\in dx)$, so that 
$F_{X^{n}}(dx) = 2\alpha n^{1+\alpha} \PP_1\big((2n)^{-1}\zeta\in dx\big)$.
On the other hand, $F_{\overline{X}}$ is the L\'{e}vy measure of the limiting ${\tt Stable}(1+\alpha)$ L\'{e}vy process with Laplace exponent 
$\psi$. This L\'evy measure is $F_{\overline{X}}=\Pi$, as identified just above the statement of this corollary. 

Note that Lemma \ref{Jacodcadlagconvergence} yields vague convergence of the $\sigma$-finite measures $F_{X^{n}}$ to $F_{\overline{X}}$ on $(0,\infty)$.
Now, $F_{\overline{X}}$ is a measure with no atoms, and it is easy to show that this is equivalent to the claim that
$$
F_{X^{n}}\left(\varepsilon,\infty \right)\longrightarrow F_{\overline{X}} \left (\varepsilon,\infty \right),\qquad\mbox{for all $\varepsilon>0$}.
$$
Now, for any $x>\varepsilon$, observe that as $n\rightarrow\infty$,
$$
\PP_{1}\left(\left. \frac{\zeta}{2n} > x \right| \frac{\zeta}{2n} >  \varepsilon \right) = \frac{2\alpha n^{1+\alpha}\PP_{1}\left((2n)^{-1}\zeta > x\right)}{2\alpha n^{1+\alpha}\PP_{1}\big( (2n)^{-1}\zeta > \varepsilon\big)} \longrightarrow \frac{\Pi\left(x,\infty\right)}{\Pi\left(\varepsilon,\infty\right)},
$$
which entails the claimed  weak convergence of probability measures on $(\varepsilon,\infty)$. 
\end{proof}

\begin{rem}
\rm
It can be shown that $2\alpha n^{1+\alpha}\PP_{1}\big((n^{-1}Z_{2nt})_{t\geq 0} \in\, \cdot\,\big) \rightarrow \Theta$ vaguely on 
$\mathbb{D}([0,\infty),\RR)\setminus\{0\}$, where $\Theta$ is the ${\tt BESQ}(-2\alpha)$ excursion measure mentioned above Conjecture \ref{conj2}, with 
$\Theta(\zeta\in\,\cdot\,)=\Pi$. See \cite{fourauthor16,PartA,PartB} for the direct study of limiting structures consisting of ${\tt Stable}(1+\alpha)$ processes with 
${\tt BESQ}(-2\alpha)$ excursions in their jumps.

\end{rem}

\subsection*{Acknowledgements}

Dane Rogers was supported by EPSRC DPhil studentship award 1512540 and by a Merton doctoral completion bursary. We would like to thank Jim Pitman for pointing out some relevant references and Noah Forman, Soumik Pal and Douglas Rizzolo for allowing us to build on unpublished drafts from which several ideas here arose and that explored the special case $\alpha=1/2$ specifically. We thank Christina Goldschmidt and Lo\"ic Chaumont for valuable feedback on the thesis version of this paper, which also led to improvements here.   

%
%
%
%

\end{document}